\documentclass[11pt]{amsart}
\usepackage[utf8]{inputenc}
\usepackage[english]{babel}
\usepackage[T1]{fontenc}
\usepackage{amsthm}
\usepackage{amssymb}
\usepackage{amsmath}
\usepackage{mathtools}
\usepackage{pdfpages}
\usepackage{changes}
\usepackage{tikz-cd}
\usepackage{soul}
\usepackage{float}
\usepackage{multicol}


\overfullrule=1mm

\newcommand{\N}{\mathbb{N}}
\newcommand{\Z}{\mathbb{Z}}

\newcommand{\Q}{\mathbb{Q}}

\newcommand{\D}{\mathbb{D}}
\newcommand{\Sym}{\mathbb{S}}
\newcommand{\Alt}{\mathbb{A}}
\newcommand{\Aut}{\operatorname{Aut}}

\newcommand{\GL}{\mathbf{GL}}
\newcommand{\id}{\mathrm{id}}
\newcommand{\Soc}{\mathrm{Soc}}
\newcommand{\Fix}{\mathrm{Fix}}

\newcommand{\Ret}{\operatorname{Ret}}

\makeatletter
\numberwithin{equation}{section}
\numberwithin{figure}{section}
\numberwithin{table}{section}
\newtheorem{thm}{Theorem}[section]
\newtheorem{lem}[thm]{Lemma}
\newtheorem{cor}[thm]{Corollary}
\newtheorem{pro}[thm]{Proposition}
\newtheorem{defn}[thm]{Definition}

\newtheorem{notation}[thm]{Notation}

\newtheorem{convention}[thm]{Convention}
\newtheorem{algorithm}[thm]{Algorithm}
\newtheorem{rem}[thm]{Remark}
\newtheorem{exa}[thm]{Example}

\makeatother

\def\Z{{\mathbb{Z}}}

\keywords{Bieberbach group, Yang-Baxter equation, set-theoretic solution, multipermutation solution, unique product property, skew brace}

\title[Retractability of solutions]{Retractability of solutions to the Yang--Baxter equation and $p$-nilpotency of skew braces}

\begin{document}

\begin{abstract}
	Using Bieberbach groups we study multipermutation involutive solutions to
	the Yang--Baxter equation. We use a linear representation of the structure
	group of an involutive solution to study the unique product property in
	such groups. An algorithm to find subgroups of a Bieberbach group
	isomorphic to the Promislow subgroup is introduced and then used in the
	case of structure group of involutive solutions. To extend the results
	related to retractability to non-involutive solutions, following the ideas
	of Meng, Ballester-Bolinches and Romero, we develop the theory of right
	$p$-nilpotent skew braces. The theory of left $p$-nilpotent skew braces is
	also developed and used to give a short proof of a theorem of Smoktunowicz
	in the context of skew braces. 
\end{abstract}

\author{E. Acri}
\author{R. Lutowski}
\author{L. Vendramin}

\address[E. Acri, L. Vendramin]{IMAS--CONICET and Universidad de Buenos Aires, 
Pabell\'on~1, Ciudad Universitaria, 1428, Buenos Aires, Argentina}
\email{eacri@dm.uba.ar}

\address[R. Lutowski]{Institute of Mathematics, University of Gdańsk, ul. Wita Stwosza 57, 80-952 Gdańsk, Poland}
\email{rafal.lutowski@mat.ug.edu.pl}

\address[L. Vendramin]{NYU-ECNU Institute of Mathematical Sciences at NYU Shanghai, 3663 Zhongshan Road North, Shanghai, 200062, China}
\email{lvendramin@dm.uba.ar}

\maketitle


\section*{Introduction}

In order to construct solutions to the celebrated Yang--Baxter equation, Drinfeld introduced in~\cite{MR1183474} set-theoretic solutions, i.e. pairs $(X,r)$ where $X$ is a set and $r\colon X\times X\to X\times X$ is a bijective map such that
\[
(r\times\id)(\id\times r)(r\times\id)=
(\id\times r)(r\times\id)(\id\times r).
\]

To address this problem by means of combinatorial methods, one considers non-degenerate
solutions, i.e. solutions $(X,r)$ where the bijective map $r$ can be written as 
\[
r(x,y)=(\sigma_x(y),\tau_y(x)),\quad x,y\in X, 
\]
for permutations $\sigma_x\colon X\to X$ and $\tau_x\colon X\to X$. 
An example of a non-degenerate solution is that of 
Lyubashenko, where the map $r$ is given by 
$r(x,y)=(\sigma(y),\tau(x))$ for $\sigma$ and $\tau$ commuting permutations of $X$. 

The first papers on set-theoretic solutions are those of Etingof, Schedler and Soloviev~\cite{MR1722951} and Gateva--Ivanova and Van den Bergh~\cite{MR1637256}. 
Both papers considered involutive solutions, i.e. solutions $(X,r)$ where $r^2=\id$. 

In~\cite{MR2278047}, 
Rump observed that each radical ring $R$ produces an involutive solution. 
(A \emph{radical ring} $R$ is a ring such that the Jacobson circle operation $x\circ y=x+y+xy$ turns $R$ into a group.) Then he introduced a new algebraic structure 
that generalizes radical rings and provides an algebraic framework to study 
involutive solutions. This new structure showed connections between the Yang--Baxter equation
and ring theory, flat manifolds, orderability of groups, Garside theory, see for example~\cite{MR3815290,MR3572046,MR3190423,MR3374524,MR2927367,MR2776789,MR3291816}.

As a tool to construct involutive solutions, Etingof, Schedler and Soloviev introduced  
retractable solutions~\cite{MR1722951}. Such solutions are those that induce 
a smaller solution after identifying certain elements of the underlying set. 
Multipermutation solutions are then those solutions that can be retracted to the trivial solution 
over the set with only one element after a finite number of steps. This means 
that multipermutation solutions generalize those solutions of Lyubashensko.
Several papers study
multipermutation involutive solutions, see for example~\cite{MR3719300,MR3780533,MR2652212,MR3861714,MR2885602,MR3439888,MR3765444,MR3771874}. 

Almost all of the ideas used in the theory of involutive solutions can be 
transported to non-involutive solutions. The algebraic framework now is provided
by skew braces~\cite{MR3647970}. This rich structure shows that the Yang--Baxter equation
is related to different topics such as Hopf--Galois extensions, regular subgroups and nil-rings. For that reason, the theory of (skew) braces is intensively studied, see for example~\cite{MR3835326,trusses,MR3478858,MR3649817,MR3574204,cjo,MR3177933,MR3834774,reflection}.

This paper explores the retractability problem for solutions
and their applications to the theory of (skew) braces. In the case of finite involutive solutions, this is done
by using in different ways the fact that the structure group of the solution is a Bierberbach group. Since 
different methods are needed 
to obtain similar results for non-involutive solutions, 
we follow the ideas of Meng, Ballester--Bolinches and Romero~\cite{mbbr} and develop   
the theory of right $p$-nilpotent skew left braces. We also 
study left $p$-nilpotent skew left braces and 
(again following Meng, Ballester--Bolinches and Romero)  
we give a short proof of a theorem of Smoktunowicz~\cite[Theorem 1.1]{MR3814340} related to left nilpotency in the context of skew left braces, 
see~\cite[Theorem 4.8]{csv}. 

The paper is organized as follows.  Section~\ref{preliminaries} is devoted to
preliminaries on set-theoretic solutions to the Yang--Baxter equation and the
theory of skew braces. In Section~\ref{Bieberbach} we recall a faithful
linear representation of the structure group of a finite involutive solution
constructed by Etingof, Schedler and Soloviev.  Structure groups of finite
involutive solutions are Bieberbach groups, and with
the faithful linear representation constructed we compute explicitly the
holonomy group of this Bieberbach group. These results are then applied to the
retractability problem of involutive solutions. In Section~\ref{UPP} we study
the unique product property for structure groups of involutive solutions. We
prove that all structure groups of involutive solutions of size $\leq7$ that
are not multipermutation solutions do not have the unique product property. In
Section~\ref{Promislow} we present an algorithm that detects subgroups of an
arbitrary Bieberbach group that are isomorphic to the Promislow group; this
algorithm is then used in Theorem~\ref{thm:P} to prove that all but eight
structure groups of involutive solutions of size $\leq8$ that are not
multipermutation solutions do not have the unique product property. To extend
some of our results to non-involutive solutions different methods are needed.
Following the ideas of Meng, Ballester--Bolinches and Romero~\cite{mbbr}, we
introduce right $p$-nilpotency of skew left braces of nilpotent type and use
this concept to explore retractable non-involutive solutions in
Section~\ref{right}. Finally, in Section~\ref{left}, again
following~\cite{mbbr}, we study left $p$-nilpotent skew left braces. 

\section{Preliminaries}
\label{preliminaries}

\subsection{Set-theoretic solutions to the Yang--Baxter equation}
\label{YB}

A \emph{set-theoretic solution} to the Yang--Baxter equation (YBE) is a pair $(X,r)$, where $X$ is a set and $r\colon X\times X\to X\times X$ is a bijective map that satisfies 
\[
(r\times\id)(\id\times r)(r\times\id)=
(\id\times r)(r\times\id)(\id\times r).
\]
The solution $(X,r)$ is said 
to be \emph{finite} if $X$ is a finite set. By convention, we write
$r(x,y)=(\sigma_x(y),\tau_y(x))$. 
We say that $(X,r)$ is non-degenerate if the maps $\sigma_x$ and $\tau_x$ are 
permutations of $X$. 

\begin{convention}
By a \emph{solution} we will mean a non-degenerate solution to the YBE.
\end{convention}

The solution $(X,r)$ is said to be \emph{involutive} if $r^2=\id$. 
The \emph{structure group} $G(X,r)$ of $(X,r)$ is defined in~\cite{MR1722951,MR1769723,MR1809284}
as the group 
with generators $x\in X$ and relations
\[
x\circ y=u\circ v\quad\text{whenever $r(x,y)=(u,v)$}.
\]

If $(X,r)$ is finite involutive, the group $G(X,r)$ is torsion-free~\cite{MR1637256}. Moreover, $G(X,r)$ 
is a Garside group~\cite{MR2764830}; see~\cite{MR3374524} or~\cite{MR3824447} for other proofs. 

If $(X,r)$ is involutive, its \emph{permutation group} is the group
$\mathcal{G}(X,r)$ generated by the permutations $\sigma_x$ for $x\in X$. 
Clearly, $\mathcal{G}(X,r)$ acts on $X$ and 
$\mathcal{G}(X,r)$ is finite if $X$ is finite. 
The permutation group of a non-involutive solution was defined by Soloviev in~\cite{MR1809284}.

If $(X,r)$ is an involutive solution and $x,y\in X$,
following~\cite{MR1722951}, we say that $x\sim y$ if and only if
$\sigma_x=\sigma_y$. Then $\sim$ is an equivalence relation over $X$ that induces 
a solution $\Ret(X,r)$ over the set $X/{\sim}$. We define inductively
$\Ret^1(X,r)=\Ret(X,r)$ and 
$\Ret^{n+1}(X,r)=\Ret(\Ret^n(X,r))$ for $n\geq1$. An involutive
solution $(X,r)$ is said to be irretractable if $\Ret(X,r)=(X,r)$ and it is 
a \emph{multipermutation solution} if there exists $n$ such that 
$\Ret^n(X,r)$ has only one element. 
We refer to~\cite{JEDLICKA20193594,LV3,MR3763907} for some results related to the retractability of non-involutive solutions. 

\subsection{Skew left braces}
\label{skew}

We refer to~\cite{MR3647970} for the theory of skew left braces. 
A \emph{skew left brace} is a triple $(A, +, \circ)$, where $(A, +)$ and $(A, \circ)$ are 
groups such that
$a\circ (b + c) = a\circ b - a + a\circ c$ holds for all $a, b, c\in A$. 
We write $a'$ to denote the inverse
of the element $a\in A$ with respect to the circle operation. 
A skew left brace $A$ such that $a\circ b = a + b$ for all $a,b\in A$ is said to be \emph{trivial}. 
If $\mathcal{X}$ is a property of groups, a skew left brace is said to be of $\mathcal{X}$-type
if its additive group belongs to the class $\mathcal{X}$. Skew left braces of abelian type
are those braces introduced by Rump in~\cite{MR2278047} to study involutive solutions. 

\begin{convention}
Skew left braces of abelian type will be called \emph{left braces}.
\end{convention}

If $A$ is a skew left brace, 
the map $\lambda\colon (A,\circ)\to\Aut(A,+)$, $a\mapsto\lambda_a$, where
$\lambda_a(b)=-a+a\circ b$, is a group homomorphism. 
By definition, 
\begin{align*}
    &a\circ b=a+\lambda_a(b),
    &&a+b=a\circ\lambda^{-1}_a(b),
    &&\lambda_a(a')=-a.
\end{align*}
Moreover:
\begin{align*}
    &a*(b + c) = a*b + b + a*c - b,\\
    &(a\circ b) * c = a * (b * c) + b * c + a * c,
\end{align*}
where 
$a*b=\lambda_a(b)-b$.

The connection between skew left braces and the YBE is the following: If $A$ is a skew left brace, then the map
\[
r_A\colon A\times A\to A\times A,
\quad
r_A(a,b)=(\lambda_a(b),\lambda_a(b)'\circ a\circ b)
\]
is a solution of the YBE.
Moreover, $r_A^2=\id$ if and
only if $A$ is of abelian type. 

If $(X,r)$ is a solution, then the group
$G(X,r)$ has a unique skew left brace structure such that
\[
r_{G(X,r)\times G(X,r)}(\iota\times\iota)=(\iota\times\iota)r,
\]
where $\iota\colon X\to G(X,r)$ is the canonical map (which in general is not injective). Moreover, the skew left brace $G(X,r)$ satisfies a universal property: if $A$ is a skew left brace
and $f\colon X\to A$ is a map such that $r_A(f\times f)=(f\times f)r$, then there exists a unique skew left brace homomorphism $\varphi\colon G(X,r)\to A$ such that $\varphi\iota=f$ and $r_A(\varphi\times\varphi)=(\varphi\times\varphi)r_{G(X,r)}$. Similar results appear in a differently language
in~\cite{MR1722951,MR1769723,MR1809284}. 

Note that the multiplicative group of the skew left brace $G(X,r)$ is the structure group of $(X,r)$ defined in Subsection~\ref{YB}.

If $(X,r)$ is an involutive solution, then the permutation group $\mathcal{G}(X,r)$ is a left brace 
with additive structure given by 
\[
\lambda_a+\lambda_b=\lambda_a\lambda_{\lambda^{-1}_a(b)}
\]
for $a,b\in A$, 
see for example~\cite{MR3527540}. 
An analog result for non-involutive solutions is proved in~\cite{MR3835326}.

A \emph{left ideal} of a skew left brace is a subgroup of the additive group that is stable under the action of $\lambda$. It follows that a left ideal of a skew left brace is a subgroup of the multiplicative group of the
skew left brace. An \emph{ideal} of a skew left brace is a left ideal that is normal as a subgroup of the additive group and normal as a subgroup of the multiplicative group. A non-zero skew left brace is \emph{simple} if it has only two ideals. The \emph{socle} of a skew left brace $A$ is the ideal $\Soc(A)=\ker\lambda\cap Z(A,+)$, where $Z(A,+)$ denotes the center of the additive group of $A$. 

\begin{notation}
For a finite set $X$, $\pi(X)$ is the set of prime divisors of $|X|$.
\end{notation}

For subsets $X$ and $Y$ of a skew left brace $A$, we write $X*Y$ to denote the subgroup of $(A,+)$ generated by elements of the form $x*y$, where $x\in X$ and $y\in Y$. 

\begin{lem}
    \label{lem:Sylow:left_ideal}
    Let $A$ be a finite skew left brace of nilpotent type and $p\in\pi(A)$. 
    Each Sylow $p$-subgroup of $(A,+)$ is a left ideal of $A$. 
\end{lem}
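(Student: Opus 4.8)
The plan is to reduce everything to one standard group-theoretic fact: in a finite nilpotent group each Sylow subgroup is unique, hence characteristic. First I would use the hypothesis that $A$ is of nilpotent type, i.e. that $(A,+)$ is a finite nilpotent group; such a group is the internal direct product of its Sylow subgroups, so for the given prime $p$ the Sylow $p$-subgroup $A_p$ of $(A,+)$ is the \emph{unique} subgroup of that order. Concretely, $A_p$ coincides with the set of elements of $A$ whose order in $(A,+)$ is a power of $p$, which makes it plain that $A_p$ is invariant under every automorphism of $(A,+)$; that is, $A_p$ is a characteristic subgroup of $(A,+)$.

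Next I would bring in the map $\lambda\colon(A,\circ)\to\Aut(A,+)$ recalled in Subsection~\ref{skew}: for every $a\in A$ the map $\lambda_a$ is an automorphism of the additive group. Since $A_p$ is characteristic in $(A,+)$, it is stable under each $\lambda_a$, so $\lambda_a(A_p)=A_p$ for all $a\in A$. By definition a left ideal of a skew left brace is exactly an additive subgroup that is stable under the action of $\lambda$; as $A_p$ is an additive subgroup satisfying this stability, it is a left ideal of $A$, which is the claim. (In particular, as noted in the excerpt, $A_p$ is then also a subgroup of $(A,\circ)$, though that is not needed here.)

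There is essentially no obstacle in this argument; the only point deserving care is locating where the hypotheses are used. Nilpotency is invoked solely to guarantee uniqueness — hence characteristicity — of the Sylow $p$-subgroup of $(A,+)$, and the multiplicative structure enters only through $\lambda$, which supplies precisely the automorphisms of $(A,+)$ against which a left ideal must be tested. If one wished to avoid quoting the structure theorem for finite nilpotent groups, one could instead argue that the Sylow $p$-subgroup is normal in the nilpotent group $(A,+)$, hence unique, and conclude as above; either way the proof is short.
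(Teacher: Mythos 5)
Your proof is correct and complete. The paper itself does not spell out an argument for this lemma --- it simply defers to \cite[Lemma 4.10]{csv} --- and your reasoning (the Sylow $p$-subgroup of the finite nilpotent group $(A,+)$ is unique, hence characteristic, hence preserved by every $\lambda_a\in\Aut(A,+)$, which is exactly the defining condition for a left ideal) is precisely the standard argument that the citation encapsulates.
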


\begin{proof}
    See for example~\cite[Lemma 4.10]{csv}.
\end{proof}

Lemma~\ref{lem:Sylow:left_ideal} only works for skew left braces of nilpotent type:

\begin{exa}
Let $G=\{g_j: j\in \mathbb{Z}/{6\mathbb{Z}}\}$. The operations
\[
g_i+g_j=g_{i+(-1)^ij},\quad
g_i\circ g_j=g_{i+j}
\]
turns $G$ into a skew left brace with multiplicative group isomorphic to the cyclic group $C_6$ and non-nilpotent additive group isomorphic to $\Sym_3$. 
The Sylow $2$-subgroups of $(G,+)$ are not left ideals of $G$.
\end{exa}

Let $G$ be a group and $p\in\pi(G)$ be such that $|G|=p^km$, where $p$ does not divide $m$. A Hall $p'$-subgroup of $G$ is a subgroup of order $m$.

\begin{lem}
\label{lem:Hall}
    Let $A$ be a finite skew left brace of nilpotent type. For each $p\in\pi(A)$, 
    the Hall $p'$-subgroup of $(A,+)$ given by
    \[
        A_{p'}=\sum_{q\in\pi(A)\setminus\{p\}}A_q,
    \]
    where each $A_q$ is the $q$-Sylow subgroup of $(A,+)$,
    is a normal subgroup of $(A,+)$ and it is a left ideal of $A$.
\end{lem}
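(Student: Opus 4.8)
The plan is to deduce both statements almost immediately from the nilpotency of $(A,+)$ together with Lemma~\ref{lem:Sylow:left_ideal}. First I would recall that a finite nilpotent group is the internal direct product of its Sylow subgroups, so for each prime $q$ the Sylow $q$-subgroup $A_q$ is the \emph{unique} subgroup of $(A,+)$ of that order, hence characteristic in $(A,+)$. Then $A_{p'}=\sum_{q\in\pi(A)\setminus\{p\}}A_q$ is the internal direct product $\prod_{q\neq p}A_q$, that is, the unique Hall $p'$-subgroup of $(A,+)$; being characteristic, it is normal in $(A,+)$, which is the first assertion.

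For the second assertion I would use that, by definition, a left ideal of $A$ is an additive subgroup invariant under every $\lambda_a$, $a\in A$. By Lemma~\ref{lem:Sylow:left_ideal} each $A_q$ is a left ideal, so $\lambda_a(A_q)\subseteq A_q$ for all $a$. Since every $\lambda_a$ is an automorphism of $(A,+)$ and in particular additive, I get
\[
\lambda_a(A_{p'})=\lambda_a\Bigl(\sum_{q\neq p}A_q\Bigr)=\sum_{q\neq p}\lambda_a(A_q)\subseteq\sum_{q\neq p}A_q=A_{p'},
\]
so $A_{p'}$ is $\lambda$-invariant, i.e.\ a left ideal of $A$. (Alternatively, since $A_{p'}$ is characteristic in $(A,+)$ and each $\lambda_a\in\Aut(A,+)$, this invariance is automatic, so Lemma~\ref{lem:Sylow:left_ideal} is not strictly needed for this half — but invoking it keeps the argument uniform with the rest of the section.)

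I do not expect any genuine obstacle here: the argument rests only on the elementary structure theory of finite nilpotent groups (unique Hall subgroups) and on the fact, already recorded in the preliminaries, that the maps $\lambda_a$ lie in $\Aut(A,+)$. The one point that deserves a line of justification is that a sum of left ideals is again a left ideal, which is precisely the computation displayed above; and the example following Lemma~\ref{lem:Sylow:left_ideal} shows that the nilpotent-type hypothesis is essential, since without it the relevant Sylow and Hall subgroups need not even be left ideals.
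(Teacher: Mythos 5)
Your argument is correct and is essentially the paper's own proof: normality of $A_{p'}$ follows from the nilpotency of $(A,+)$, and the left-ideal property follows from Lemma~\ref{lem:Sylow:left_ideal} together with the observation that a sum of left ideals is a left ideal. You simply spell out the details (characteristic Sylow subgroups, $\lambda_a$-invariance of the sum) that the paper leaves implicit.
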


\begin{proof}
    Since $(A,+)$ is nilpotent, $A_{p'}$ is a normal subgroup of $(A,+)$. Moreover, $A_{p'}$ is a left ideal of $A$ by Lemma~\ref{lem:Sylow:left_ideal} and the fact that the sum of left ideals is a left ideal.
\end{proof}

A skew left brace $A$ is said to be \emph{right nilpotent} 
if $A^{(n)}=0$ for some $n\geq1$, where
$A^{(1)}=A$ and $A^{(n+1)} = A^{(n)}*A$ for $n\geq1$. 
Each $A^{(n)}$ is an ideal of $A$.
In~\cite{MR2278047}, 
Rump introduced 
the sequence
\begin{equation}
    \label{eq:soc_sequence}
    0=\Soc_0(A)\subseteq\Soc_1(A)\subseteq\cdots\subseteq\Soc_n(A)\subseteq\cdots,
\end{equation}
and used it to study right nilpotent braces of abelian type and retractability of involutive solutions. In the context of skew left braces,~\eqref{eq:soc_sequence} is defined recursively as follows: $\Soc_0(A)=0$ and for each $n\geq 1$, $\Soc_{n+1}(A)$ is the ideal of $A$ containing $\Soc_n(A)$ and such that $\pi(\Soc_{n+1}(A))=\Soc(\pi(A))$, where $\pi\colon A\to A/\Soc_n(A)$ is the canonical map. 

For a skew left brace $A$ and $x,y\in A$, 
we write $[x,y]_+=x+y-x-y$ to denote the additive commutator of $x$ and $y$. 

\begin{lem}
\label{lem:soc_n}
Let $A$ be skew left brace. Then
\[
\Soc_{n+1}(A)=\{x\in A:x*a\in\Soc_n(A)\text{ and }[x,a]_+\in\Soc_n(A)\text{ for all 
$a\in A$}\}   
\]
for all $n\in\N$.
\end{lem}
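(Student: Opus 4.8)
The plan is to prove the statement by induction on $n$, unwinding the recursive definition of $\Soc_{n+1}(A)$ in terms of the quotient $A/\Soc_n(A)$ and the explicit description of the socle $\Soc(B)=\ker\lambda_B\cap Z(B,+)$ of an arbitrary skew left brace $B$. Write $\pi\colon A\to A/\Soc_n(A)=:B$ for the canonical projection. By definition $\Soc_{n+1}(A)=\pi^{-1}(\Soc(B))$, so $x\in\Soc_{n+1}(A)$ if and only if $\pi(x)\in\ker\lambda_B$ and $\pi(x)\in Z(B,+)$. The whole proof then consists of translating these two conditions back into statements about $A$ modulo $\Soc_n(A)$.

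First I would handle the center condition: $\pi(x)\in Z(B,+)$ means $[\pi(x),\pi(a)]_+=0$ in $B$ for all $a\in A$, which is exactly $[x,a]_+\in\Soc_n(A)$ for all $a\in A$ (here one uses that $\pi$ is a homomorphism of additive groups, so $\pi([x,a]_+)=[\pi(x),\pi(a)]_+$). Second, the kernel condition: $\pi(x)\in\ker\lambda_B$ means $\lambda_{\pi(x)}=\id$, i.e. $\lambda_{\pi(x)}(\pi(a))=\pi(a)$ for all $a\in A$. Since $\lambda_x(a)-a=x*a$ by definition of $*$, and $\pi$ respects both $\lambda$ (because $\lambda$ is compatible with quotients by ideals) and the additive structure, this says $\pi(x*a)=0$, i.e. $x*a\in\Soc_n(A)$ for all $a\in A$. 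Combining the two, $x\in\Soc_{n+1}(A)$ if and only if $x*a\in\Soc_n(A)$ and $[x,a]_+\in\Soc_n(A)$ for all $a\in A$, which is the claim; the case $n=0$ is just the definition $\Soc_1(A)=\Soc(A)=\ker\lambda\cap Z(A,+)$ spelled out with $\Soc_0(A)=0$.

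The main technical point to be careful about — the only place where something could go wrong — is the compatibility of the map $\lambda$ with the quotient by the ideal $\Soc_n(A)$: one must check that $\lambda_{\pi(x)}(\pi(a))=\pi(\lambda_x(a))$ is well-defined on $B=A/\Soc_n(A)$, which holds precisely because $\Soc_n(A)$ is an ideal (in particular a left ideal, hence $\lambda$-stable, and normal in $(A,+)$). This is standard for skew left braces, so I would either cite it or note it in one line. Everything else is a direct unwinding of definitions, so no serious obstacle is expected beyond keeping the bookkeeping between $A$ and $A/\Soc_n(A)$ straight.
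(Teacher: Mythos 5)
Your proof is correct and is exactly the ``straightforward'' unwinding of definitions that the paper has in mind (its own proof is just the one-liner ``It is straightforward''). The only remark is that no induction on $n$ is actually needed: for each fixed $n$ the equivalence follows directly from $\Soc_{n+1}(A)=\pi^{-1}(\Soc(A/\Soc_n(A)))$ together with $\Soc(B)=\ker\lambda\cap Z(B,+)$, without any appeal to the statement at level $n-1$.
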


\begin{proof}
    It is straightforward.
\end{proof}

\begin{lem}\label{lem:soc_n_2}
    Let $A$ be a skew left brace of nilpotent type. Then $A$ is right nilpotent if and only if $A=\Soc_n(A)$ for some $n\in\N$.
\end{lem}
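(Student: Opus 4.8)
The plan is to prove the two implications separately; the direction ``$A=\Soc_n(A)\Rightarrow A$ right nilpotent'' is elementary, and the converse is where the hypothesis of nilpotent type is really used.

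For the first implication, assume $A=\Soc_n(A)$. By Lemma~\ref{lem:soc_n}, each generator $x*a$ of the group $\Soc_{k+1}(A)*A$ lies in $\Soc_k(A)$, so $\Soc_{k+1}(A)*A\subseteq\Soc_k(A)$ for all $k\geq0$. A one-line induction then yields $A^{(k+1)}\subseteq\Soc_{n-k}(A)$ for $0\leq k\leq n$ (the base case is the hypothesis, and $A^{(k+2)}=A^{(k+1)}*A\subseteq\Soc_{n-k}(A)*A\subseteq\Soc_{n-k-1}(A)$); at $k=n$ this gives $A^{(n+1)}\subseteq\Soc_0(A)=0$. No assumption on $(A,+)$ is used here.

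For the converse I would first observe that nilpotent type cannot be dropped: the skew left brace with $(A,\circ)\cong C_6$ and $(A,+)\cong\Sym_3$ displayed in the example above is right nilpotent (one checks $A^{(3)}=0$ directly) but satisfies $\Soc(A)=\ker\lambda\cap Z(A,+)=0$ because $\Sym_3$ has trivial centre, whence $\Soc_n(A)=0$ for every $n$. Now assume $(A,+)$ is nilpotent of class $c$ and that $A^{(N+1)}=0$. I would prove by downward induction on $j$ that $A^{(j)}\subseteq\Soc_{(N+1-j)c}(A)$; evaluating at $j=1$ gives $A=\Soc_{Nc}(A)$. The base case $j=N+1$ is $A^{(N+1)}=0=\Soc_0(A)$. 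For the step, suppose $A^{(j+1)}\subseteq\Soc_s(A)$. Since $A^{(j)}$ is an ideal it is a normal subgroup of $(A,+)$, so setting $\Gamma_0=A^{(j)}$ and $\Gamma_{i+1}=[\Gamma_i,A]_+$ (the additive subgroup generated by the commutators $[x,a]_+$, $x\in\Gamma_i$, $a\in A$) one has $\Gamma_i\subseteq A^{(j)}$ and $\Gamma_i\subseteq\gamma_{i+1}(A,+)$ for all $i$, and in particular $\Gamma_c=0$. An inner downward induction on $i$, using Lemma~\ref{lem:soc_n}, then shows $\Gamma_i\subseteq\Soc_{s+c-i}(A)$: for $x\in\Gamma_i\subseteq A^{(j)}$ one has $x*a\in A^{(j)}*A=A^{(j+1)}\subseteq\Soc_s(A)$ and $[x,a]_+\in\Gamma_{i+1}\subseteq\Soc_{s+c-i-1}(A)$ for every $a\in A$, hence $x\in\Soc_{s+c-i}(A)$; at $i=0$ this is $A^{(j)}\subseteq\Soc_{s+c}(A)$, completing the step.

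The main obstacle is the asymmetry between the two operations: right nilpotency is a condition on $*$ only, controlling the series $A^{(n)}$, while lying in the socle series additionally requires the additive commutators $[x,a]_+$ to descend, something right nilpotency does not see. The crux of the converse is to convert the nilpotency of $(A,+)$ into exactly this extra control, absorbing one layer of additive commutators per step of the inner induction — which is precisely why the statement breaks down when $(A,+)$ is only assumed, say, solvable. Everything else is routine: each $A^{(n)}$ is an ideal (recorded before the statement), $\Gamma_i\subseteq\gamma_{i+1}(A,+)$ holds because $A^{(j)}$ is normal in the nilpotent group $(A,+)$, and the rest is the bookkeeping of the two nested inductions with Lemma~\ref{lem:soc_n}.
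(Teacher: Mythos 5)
Your proof is correct. Both implications check out: the easy direction is a clean induction from the containment $\Soc_{k+1}(A)*A\subseteq\Soc_k(A)$ supplied by Lemma~\ref{lem:soc_n}, and in the converse the two nested inductions are sound — the normality of the ideal $A^{(j)}$ in $(A,+)$ does give $\Gamma_i\subseteq A^{(j)}$ and $\Gamma_i\subseteq\gamma_{i+1}(A,+)$, so $\Gamma_c=0$, and each application of Lemma~\ref{lem:soc_n} in the inner induction is legitimate because $x*a\in A^{(j+1)}\subseteq\Soc_s(A)\subseteq\Soc_{s+c-i-1}(A)$ and $[x,a]_+\in\Gamma_{i+1}$ hold for \emph{every} $x\in\Gamma_i$, not merely for generators. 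The paper itself gives no argument here: it disposes of the lemma by citing Lemmas 2.15 and 2.16 of~\cite{csv}. So what you have written is a genuinely self-contained replacement, and it has two merits the bare citation lacks: it isolates exactly where the nilpotent-type hypothesis enters (absorbing one layer of additive commutators of $(A,+)$ per step of the inner induction, yielding the explicit bound $A=\Soc_{Nc}(A)$ with $c$ the additive nilpotency class), and it documents via the $C_6$/$\Sym_3$ example — which is indeed right nilpotent with $A^{(3)}=0$ but has $\Soc(A)=0$ — that the converse genuinely fails without that hypothesis, while the forward implication does not need it. The only cosmetic remark is that your claim $\Gamma_i\subseteq\gamma_{i+1}(A,+)$ is stronger than what you use; all you need is $\Gamma_c=0$, which it delivers.
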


\begin{proof}
    It follows from~\cite[Lemmas 2.15 and 2.16]{csv}.
\end{proof}

A skew left brace is said to be \emph{left nilpotent} if $A^n = 0$ for some $n$, where $A^1=A$ and $A^{n+1} = A * A^n$ for $n\geq1$. Each $A^n$ is a left ideal of $A$. We refer to~\cite{MR3574204,csv,mbbr,MR2278047,MR3765444,MR3814340, MR3763907} 
for results on left nilpotent skew left braces.

\section{Bieberbach groups}
\label{Bieberbach}

We refer to~\cite{MR2978307} for the theory of Bieberbach groups. 
A group $G$ is said to be an $n$-dimensional \emph{Bieberbach group} if it is
torsion free and contains an abelian normal subgroup $A\simeq\Z^n$ of
finite index such that $C_G(A)=A$, where
\[
C_G(A)=\{g\in G:ga=ag\text{ for all $a\in A$}\}.
\]
Thus $G$ fits into the exact sequence
\[
	0\to A\to G\xrightarrow{p}{} P\to 1,
\]
where $P=G/A$ is a finite group. The condition $C_G(A)=A$ is equivalent to the faithfulness of the action
$h\colon P\to \Aut(A)$, $h(y)(a)=xax^{-1}$, where $a\in A$ and
$x\in G$ is such that $p(x)=y$ induced by the conjugation action of $G$ over $A$. 
In the theory of
Bieberbach groups, $P$ is known as the \emph{holonomy group} of $G$, the map $h$ as the \emph{holonomy representation} of $G$ and $A$ as
the \emph{traslation subgroup} of $G$. 





%

The group $G$ can be seen as a discrete subgroup of the isometries of a finite-dimensional euclidean space, that is $G\subseteq \mathcal{O}_n(\mathbb{R})\ltimes \mathbb{R}^n$ for some $n$. In this case, the \emph{translation subgroup} $A$ can be seen as $G\cap \mathbb{R}^n$, see \cite[page 533]{farkas1981}. 

In~\cite[Theorem 1.6]{MR1637256}, Gateva--Ivanova and Van den Bergh proved that if $(X,r)$ is a finite involutive solution, then the structure group of $G(X,r)$ is a Bieberbach group of dimension $|X|$. 
The holonomy group of $G(X,r)$ will be computed in Theorem \ref{thm:holonomy}.
First we need a faithful representation of $G(X,r)$ that allows us to deal with these groups as subgroups of the isometries of an euclidean space. 
The following result goes back to Etingof, Schedler and Soloviev,
see~\cite{MR1722951}. 

\begin{thm}
    \label{thm:ESS}
	Let $(X,r)$ be a finite involutive solution of size $n$.  Then there exists
	an injective group homomorphism $G(X,r)\to \mathcal{O}_n(\mathbb{R})\ltimes
	\mathbb{R}^n$.  In particular, $G(X,r)$ is isomorphic to a subgroup of
	$\GL(n+1,\mathbb{Z})$.
\end{thm}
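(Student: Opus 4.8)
The plan is to realize the structure group $G(X,r)$ explicitly inside $\mathcal O_n(\mathbb R)\ltimes\mathbb R^n$ using the combinatorial data of the permutations $\sigma_x,\tau_x$, following Etingof--Schedler--Soloviev. First I would recall that for a finite involutive solution $(X,r)$ of size $n$, writing $X=\{x_1,\dots,x_n\}$ and letting $\{e_1,\dots,e_n\}$ be the standard orthonormal basis of $\mathbb R^n$, one defines for each generator $x\in X$ an affine isometry
\[
\rho(x)\colon v\mapsto P_{\sigma_x}v+e_x,
\]
where $P_{\sigma_x}\in\mathcal O_n(\mathbb R)$ is the permutation matrix of $\sigma_x$ (which is orthogonal, hence lands in $\mathcal O_n(\mathbb R)$) and $e_x$ is the basis vector indexed by $x$. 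The key step is then to verify that the assignment $x\mapsto\rho(x)$ respects the defining relations of $G(X,r)$, i.e.\ that $\rho(x)\rho(y)=\rho(u)\rho(v)$ whenever $r(x,y)=(u,v)$. Composing, $\rho(x)\rho(y)$ sends $v\mapsto P_{\sigma_x}P_{\sigma_y}v+P_{\sigma_x}e_y+e_x$, so one must check two identities: the linear parts give $P_{\sigma_x}P_{\sigma_y}=P_{\sigma_u}P_{\sigma_v}$, and the translation parts give $P_{\sigma_x}e_y+e_x=P_{\sigma_u}e_v+e_u$, equivalently $e_{\sigma_x(y)}+e_x=e_{\sigma_u(v)}+e_u$. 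Since $r(x,y)=(u,v)=(\sigma_x(y),\tau_y(x))$, the translation identity reduces to $e_u+e_v=e_x+e_y$ (using $\sigma_x(y)=u$ and $\sigma_u(v)=\sigma_{\sigma_x(y)}(\tau_y(x))=$ the first component of $r^2(x,y)=x$ by involutivity... more precisely $\sigma_u(v)=x$ when $r$ is involutive), which is exactly the statement that $r$ is a bijection $\{x,y\}\leftrightarrow\{u,v\}$ as multisets of coordinates; the linear identity $\sigma_x\sigma_y=\sigma_u\sigma_v$ is one of the standard consequences of the YBE for involutive solutions, recorded in~\cite{MR1722951}.

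Next I would argue injectivity. Since $G(X,r)$ is a Bieberbach group of dimension $n$ by~\cite[Theorem 1.6]{MR1637256}, its translation subgroup $A\simeq\mathbb Z^n$ maps under $\rho$ to a lattice: concretely, a word $w$ in the generators maps to an affine isometry whose linear part is the product of the corresponding $P_{\sigma}$'s and whose translation part lies in $\mathbb Z^n$, and one checks that the kernel of $\rho$ would be a normal torsion-free subgroup acting trivially, which by the Bieberbach structure (faithfulness of the holonomy action and $C_G(A)=A$) forces the kernel to be trivial; alternatively one invokes directly the construction in~\cite{MR1722951}, where $\rho$ restricted to $A$ is shown to be the standard embedding $\mathbb Z^n\hookrightarrow\mathbb R^n$ and $\rho$ is injective on the finite quotient because the holonomy representation is faithful. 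Either way, $\rho\colon G(X,r)\to\mathcal O_n(\mathbb R)\ltimes\mathbb R^n$ is an injective group homomorphism.

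For the final clause, I would use the standard faithful linearization of the affine group: the map $\mathcal O_n(\mathbb R)\ltimes\mathbb R^n\to\GL(n+1,\mathbb R)$ sending $(M,t)$ to the block matrix $\begin{pmatrix}M & t\\ 0 & 1\end{pmatrix}$ is an injective group homomorphism. Under this, each $\rho(x)$ becomes $\begin{pmatrix}P_{\sigma_x} & e_x\\ 0 & 1\end{pmatrix}$, an integer matrix with integer inverse $\begin{pmatrix}P_{\sigma_x}^{-1} & -P_{\sigma_x}^{-1}e_x\\ 0 & 1\end{pmatrix}$, so the image of $G(X,r)$ lies in $\GL(n+1,\mathbb Z)$. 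Composing the two injections gives the desired embedding $G(X,r)\hookrightarrow\GL(n+1,\mathbb Z)$.

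The main obstacle I anticipate is the verification that $\rho$ is well defined and injective in a self-contained way; the well-definedness rests on the combinatorial identities for involutive solutions ($\sigma_x\sigma_y=\sigma_{\sigma_x(y)}\sigma_{\tau_y(x)}$ and the coordinate-multiset identity), which are exactly the content of~\cite{MR1722951}, while injectivity is most cleanly obtained by citing that construction together with the Bieberbach property from~\cite{MR1637256} rather than reproving it. The linearization step is routine.
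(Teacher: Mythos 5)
Your proposal is correct and follows essentially the same route as the paper: it realizes $G(X,r)$ via the Etingof--Schedler--Soloviev assignment $x\mapsto(\sigma_x,t_x)$ in $\Sym_X\ltimes\Z^X$ viewed as affine isometries, with injectivity ultimately outsourced to \cite{MR1722951} (Propositions 2.3 and 2.4), and then applies the standard block-matrix linearization to land in $\GL(n+1,\Z)$. The only difference is that you spell out the verification of the defining relations and sketch the injectivity argument, where the paper simply cites \cite{MR1722951} for both.
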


\begin{proof}
    Let $\Sym_X$ denote the group of permutations of $X$ and let $\Z^X$ be
    the free abelian group spanned by $\{t_x:x\in X\}$. 
    Let $M_X=\Sym_X\ltimes\Z^X$ be the semidirect product associated with
    the action of $\Sym_X$ on $\Z^X$. By Propositions 2.3 and 2.4 of~\cite{MR1722951}, the map $X\to M_X$, $x\mapsto (\sigma_x,t_x)$, extends to an injective group
    homomorphism $G(X,r)\to M_X$. Using permutation matrices we see $\Sym_X$ as a subgroup of $\mathcal{O}_n(\mathbb{Z})\subseteq \mathcal{O}_n(\mathbb{R})$. Then, since $\Z^X\simeq \Z^n\subseteq\mathbb{R}^n$, it follows that $M_X$ is isomorphic to a subgroup of the semidirect product $\mathcal{O}_n(\mathbb{R})\ltimes \mathbb{R}^n$. Since the multiplication
    of $\mathcal{O}_n(\mathbb{R})\ltimes \mathbb{R}^n$ is given by
    \[
    (A,a)(B,b)=(AB,a+Ab),
    \]
    after identifying each $(A,a)\in M_X$ with the matrix 
    $\left(\begin{smallmatrix}
    A & a\\
    0 & 1
    \end{smallmatrix}\right)\in\GL(n+1,\Z)$, the claim follows.
\end{proof}

Notice that under this identification, we can see at the socle of $G(X,r)$ as the translation subgroup, i.e. it is the set of elements of $\mathcal{O}_n(\mathbb{R})\ltimes \mathbb{R}^n$ of the form $(I,a)\in M_X \subset \GL(n+1,\Z)$. Furthermore, 
    \[
    C_{G(X,r)}(\Soc(G(X,r)))=\Soc(G(X,r)).
    \]
    Since $\Soc(G(X,r))$ is abelian, 
    \[
    \Soc(G(X,r))\subseteq C_{G(X,r)}(\Soc(G(X,r))).
    \]
    Now for every $(I,x)\in\Soc(G(X,r))$ and $(A,a)\in C_{G(X,r)}(\Soc(G(X,r)))$ we have
    \[
    (A,a)(I,x)(A^{-1},-A^{-1}a)=(I,Ax) = (I,x),
    \]
    so $Ax=x$ holds for all elements of the set $\{x\in \mathbb{R}^n : (I,x) \in \Soc(G(X,r))\}$. But by the first Bieberbach theorem (see \cite[Theorem 2.1]{MR2978307}) this set spans $\mathbb{R}^n$, hence $(A,a)\in C_{G(X,r)}(\Soc(G(X,r)))$ if and only if $A=I$. Thus the only elements of the group that centralizes the socle are exactly the elements of the socle.
    
    We know from \cite[Theorem 1.6]{MR1637256} that the structure group of a solution is Bieberbach. As a direct consequence of the first Bieberbach Theorem, the socle is the subgroup of pure translations and it is torsion-free and maximal normal abelian subgroup of finite index. So, the holonomy group is exactly the permutation group of the solution. The holonomy representation $h$ is the action by conjugation of $G(X,r)$ over the socle that descends to a faithful representation. We summarize this result in the following theorem for the sequel.
    
\begin{thm}
	\label{thm:holonomy}
	Let $(X,r)$ be a finite involutive solution. Then $G(X,r)$ is a Bieberbach group with holonomy group isomorphic to $\mathcal{G}(X,r)$.
\end{thm}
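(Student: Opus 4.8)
The plan is to combine the faithful representation of Theorem~\ref{thm:ESS} with the Gateva-Ivanova--Van den Bergh theorem and then to pin down the translation subgroup as the socle. Write $G=G(X,r)$ and $n=|X|$, and fix the embedding $\phi\colon G\hookrightarrow M_X=\Sym_X\ltimes\Z^X\subseteq\mathcal{O}_n(\mathbb{R})\ltimes\mathbb{R}^n$ of Theorem~\ref{thm:ESS}, so that $\phi(\iota(x))=(\sigma_x,t_x)$. By~\cite[Theorem~1.6]{MR1637256} the group $G$ is a Bieberbach group of dimension $n$; let $A$ denote its translation subgroup. By the first Bieberbach theorem (\cite[Theorem~2.1]{MR2978307}), $A$ is the unique maximal abelian normal subgroup of $G$, it is free abelian of finite index and self-centralising, and in the euclidean realisation it is the group $G\cap\mathbb{R}^n$ of pure translations; since $\phi(G)\subseteq M_X$, this means $A=\{g\in G:\phi(g)=(I,a)\text{ for some }a\}=G\cap\Z^X$. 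Once we show $A=\Soc(G)$ we are done, because by definition the holonomy group of $G$ is $G/A$.

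To identify $A$ with $\Soc(G)$, consider the homomorphism $\rho\colon G\to\Sym_X$ obtained by composing $\phi$ with the projection $M_X\to\Sym_X$; then $\rho(\iota(x))=\sigma_x$, so $\rho(G)=\langle\sigma_x:x\in X\rangle=\mathcal{G}(X,r)$ and $\ker\rho=G\cap\Z^X=A$. On the other hand, the defining compatibility of the structure brace gives $\lambda_{\iota(x)}(\iota(y))=\iota(\sigma_x(y))$; since $\{\iota(x):x\in X\}$ generates $(G,\circ)$ and is a basis of the free abelian group $(G,+)$ (part of the structure-brace construction, cf.~\cite{MR1722951}), the homomorphism $\lambda$ is the composite of $\rho$ with the permutation representation $\Sym_X\to\Aut(G,+)$, and the latter is injective; hence $\ker\lambda=\ker\rho=A$. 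As $(G,+)$ is abelian we have $Z(G,+)=G$, so $\Soc(G)=\ker\lambda\cap Z(G,+)=\ker\lambda=A$. Equivalently, as in the discussion preceding the statement, one checks directly that $\Soc(G)$ is torsion-free, normal, abelian, of finite index and satisfies $C_G(\Soc(G))=\Soc(G)$ --- the latter using that $\{a:(I,a)\in\Soc(G)\}$ spans $\mathbb{R}^n$ --- and then appeals to the uniqueness of the translation subgroup.

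Therefore the holonomy group is $G/A=G/\Soc(G)\cong\rho(G)=\mathcal{G}(X,r)$, with holonomy representation the conjugation action of $G$ on $\Soc(G)$. The main obstacle is the equality $A=\Soc(G)$: it requires matching the Bieberbach translation subgroup, singled out by a maximality/self-centralising property, with the skew-brace socle $\ker\lambda\cap Z(G,+)$, and the tool that makes this possible is precisely the explicit euclidean model of Theorem~\ref{thm:ESS}, in which both subgroups are visibly the group $G\cap\Z^X$ of pure translations.
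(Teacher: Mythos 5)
Your proof is correct and follows essentially the same route as the paper: both arguments combine the Etingof--Schedler--Soloviev embedding of Theorem~\ref{thm:ESS} with the Gateva-Ivanova--Van den Bergh theorem and the first Bieberbach theorem to identify $\Soc(G(X,r))$ with the subgroup of pure translations, and then read off the holonomy group as $G(X,r)/\Soc(G(X,r))\cong\mathcal{G}(X,r)$. The only (minor) difference is that you obtain the identification via $\Soc(G)=\ker\lambda=\ker\rho$ using faithfulness of the permutation representation of $\Sym_X$ on $\Z^X$, whereas the paper verifies $C_{G}(\Soc(G))=\Soc(G)$ by a direct matrix computation; both steps rest on the same spanning property from the first Bieberbach theorem.
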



\subsection{Applications to the YBE}

Multipermutation solutions are related to orderability of groups.  Jespers and
Okni\'nski proved in~\cite[Proposition 4.2]{MR2189580} that the structure group
of a finite involutive multipermutation solution is poly-$\Z$ and hence left orderable.
Independently in~\cite[Theorem 2]{MR3572046} Chouraqui, interested in 
studying left orderability of structure groups of involutive solutions, proved the same result. It was
proved later in~\cite[Theorem 2.1]{MR3815290} that a finite involutive solution is multipermutation if and only if its structure group is left orderable. 
A group $G$ is said to be \emph{diffuse} if for each finite non-empty subset $A$ of $G$ there exists
an element $a\in A$ such that for all $g\in G$, $g\ne1$, either $ga\not\in A$ or $g^{-1}a\not\in A$. 
In~\cite[Theorem 7.12]{LV3} it is proved that 
structure groups of finite non-degenerate involutive solutions are left orderable if and only if they are diffuse. We collect all these facts in the following theorem.

\begin{thm}
	\label{thm:equivalences}
	Let $(X,r)$ be a finite involutive solution. The following statements are equivalent:
	\begin{enumerate}
		\item $(X,r)$ is a multipermutation solution.
		\item $G(X,r)$ is poly-$\Z$.
		\item $G(X,r)$ is left orderable.
		\item $G(X,r)$ is diffuse.
	\end{enumerate}
\end{thm}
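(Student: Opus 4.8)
The statement is a synthesis of results already in the literature, so the plan is to close the cycle of implications $(1)\Rightarrow(2)\Rightarrow(3)\Rightarrow(1)$ and then append the equivalence $(3)\Leftrightarrow(4)$, invoking the appropriate theorem at each step and supplying only the one elementary implication in full.

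For $(1)\Rightarrow(2)$ I would cite \cite[Proposition 4.2]{MR2189580}: if $(X,r)$ is a finite involutive multipermutation solution, each retraction step corresponds to quotienting by a finitely generated free abelian normal subgroup, so an induction on the multipermutation level exhibits $G(X,r)$ as an iterated extension of groups $\Z^{k}$, hence poly-$\Z$ (the same conclusion is reached independently in \cite[Theorem 2]{MR3572046}). For $(2)\Rightarrow(3)$ one uses the classical fact that every poly-$\Z$ group is left orderable: given a subnormal series with infinite cyclic factors, positive cones can be lifted successively through the extensions $1\to\Z\to H\to Q\to 1$, since whenever $N$ is a normal subgroup with $N$ and $H/N$ left orderable the lexicographic cone makes $H$ left orderable, and here $N\cong\Z$. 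For $(3)\Rightarrow(1)$ I would appeal to \cite[Theorem 2.1]{MR3815290}, which says precisely that a finite involutive solution whose structure group admits a left order must be a multipermutation solution. Finally, $(3)\Leftrightarrow(4)$ is \cite[Theorem 7.12]{LV3}, where it is shown that for structure groups of finite involutive solutions left orderability and diffuseness coincide. Putting these together yields all four equivalences.

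The only step that is not purely formal is $(3)\Rightarrow(1)$. Were one to reprove it rather than cite it, the crux would be to convert a left order on $G(X,r)$ into the collapse of the retraction tower: one would argue that an irretractable solution of size larger than one forces $G(X,r)$ to contain a configuration incompatible with any left order --- for instance, via Theorem~\ref{thm:holonomy}, by showing that failure of the multipermutation property makes the holonomy action on the socle too large for the Bieberbach group to be poly-$\Z$. That translation between the recursive combinatorics of $\Ret$ and orderability of the ambient Bieberbach group is where the real work lies; the remaining implications are bookkeeping.
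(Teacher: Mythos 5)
Your proposal is correct and matches the paper's own treatment: the authors assemble the theorem from exactly the same sources --- \cite[Proposition 4.2]{MR2189580} (or \cite[Theorem 2]{MR3572046}) for $(1)\Rightarrow(2)$, the standard fact that poly-$\Z$ groups are left orderable for $(2)\Rightarrow(3)$, \cite[Theorem 2.1]{MR3815290} for $(3)\Rightarrow(1)$, and \cite[Theorem 7.12]{LV3} for $(3)\Leftrightarrow(4)$. The only difference is cosmetic: you spell out the lexicographic-cone argument for extensions, whereas the paper simply records the implications as known facts.
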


As an application of Theorem~\ref{thm:equivalences} we
obtain the following particular case of a theorem proved by Ced\'o, Jespers and Okni\'nski in~\cite{MR2652212}
and by Cameron and Gateva--Ivanova in~\cite{MR2885602}.
For a direct 
proof (without the finiteness assumption), see~\cite[Proposition 10]{Rump}.

\begin{cor}
	\label{cor:cyclic}
	Let $(X,r)$ be a finite involutive solution. If $\mathcal{G}(X,r)$ is
	cyclic, then $(X,r)$ is a multipermutation solution.
\end{cor}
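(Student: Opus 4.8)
The plan is to use Theorem~\ref{thm:equivalences}, which reduces the claim to showing that if $\mathcal{G}(X,r)$ is cyclic then $G(X,r)$ is left orderable (equivalently poly-$\Z$, equivalently diffuse). Since $(X,r)$ is a finite involutive solution, Theorem~\ref{thm:holonomy} tells us that $G(X,r)$ is a Bieberbach group whose holonomy group is precisely $\mathcal{G}(X,r)$. So the hypothesis says exactly that we are dealing with a Bieberbach group with cyclic holonomy group. First I would recall the structure of such groups: if $G$ is Bieberbach with translation subgroup $A\simeq\Z^n$ and cyclic holonomy $P=\langle y\rangle$, then $G$ is generated by $A$ together with a single element $g$ mapping to a generator of $P$, and $G/A$ being cyclic immediately gives that $G$ is poly-$\Z$ — indeed $0\to A\to G\to P\to 1$ exhibits $G$ as an extension of the poly-$\Z$ group $\Z^n$ by the poly-$\Z$ (in fact finite cyclic, hence trivially poly-$\Z$ once we pass through $A$) quotient, and more directly one builds the poly-$\Z$ series $1\triangleleft \Z\triangleleft\Z^2\triangleleft\cdots\triangleleft\Z^n=A\triangleleft G$ where the final quotient $G/A\cong P$ is cyclic. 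Then by Theorem~\ref{thm:equivalences}, $(X,r)$ is a multipermutation solution.

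Alternatively, and perhaps more cleanly, one can argue left orderability directly: a torsion-free group $G$ that is (finite cyclic)-by-$\Z^n$ is poly-(infinite cyclic), and poly-$\Z$ groups are left orderable by a standard lifting argument along a subnormal series with infinite cyclic quotients. The only subtlety is the final quotient $G/A$, which is \emph{finite} cyclic rather than infinite; this is harmless because to run the poly-$\Z$ argument we only need $A=\Z^n$ itself to sit inside $G$ with torsion-free quotient behaviour handled at the level of the series $1\triangleleft\Z\triangleleft\cdots\triangleleft\Z^n$, and then $G$ being torsion-free with $A$ of finite index and $G/A$ cyclic forces $G$ to be poly-$\Z$ (take the preimage in $G$ of a generator of $G/A$ together with $A$; in a torsion-free group this generator has infinite order modulo nothing problematic). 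In short, the engine is: Bieberbach $+$ cyclic holonomy $\Rightarrow$ poly-$\Z$.

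The main obstacle is essentially bookkeeping rather than depth: one must be a little careful that "cyclic holonomy" genuinely yields a poly-$\Z$ structure and not merely a virtually-$\Z^n$ one. The resolution is that $G$ is torsion-free (being Bieberbach, or because $G(X,r)$ is torsion-free for finite involutive solutions as recalled in Section~\ref{YB}), so no torsion obstructs the series, and a torsion-free group with a normal $\Z^n$ of finite \emph{cyclic} index is automatically poly-$\Z$. Once that is in hand, the equivalences in Theorem~\ref{thm:equivalences} do the rest, and no further computation is needed. I would keep the write-up to a few lines, citing Theorem~\ref{thm:holonomy} for the identification of the holonomy group with $\mathcal{G}(X,r)$ and Theorem~\ref{thm:equivalences} for the passage from poly-$\Z$ back to multipermutation.
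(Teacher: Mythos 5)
Your overall strategy coincides with the paper's: both arguments exploit the exact sequence $0\to \Soc(G(X,r))\to G(X,r)\to\mathcal{G}(X,r)\to 1$ (equivalently, the Bieberbach structure with cyclic holonomy) and then feed left orderability into Theorem~\ref{thm:equivalences}. However, your write-up has a genuine gap at its only non-trivial step, namely the claim that a torsion-free group $G$ with a normal subgroup $A\simeq\Z^n$ of finite index and $G/A$ finite cyclic is poly-$\Z$ (or at least left orderable). The series $1\triangleleft\Z\triangleleft\cdots\triangleleft\Z^n=A\triangleleft G$ is not a poly-$\Z$ series, because its top quotient $G/A$ is a nontrivial \emph{finite} cyclic group, and an extension of a poly-$\Z$ group by a finite group is only virtually poly-$\Z$; your parenthetical remark that a finite cyclic quotient is ``trivially poly-$\Z$ once we pass through $A$'' is simply false. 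The proposed repair (``take the preimage in $G$ of a generator of $G/A$ together with $A$; \dots nothing problematic'') never actually produces a subnormal series with infinite cyclic quotients. That torsion-freeness plus a finite-index normal $\Z^n$ is \emph{not} enough is demonstrated by the Promislow group of Section~\ref{Promislow}: it is torsion-free, contains a normal $\Z^3$ of index $4$, and is not left orderable, hence not poly-$\Z$. So the cyclicity of the quotient must enter in an essential way, and your argument never uses it beyond naming it.

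The paper closes exactly this gap by quoting \cite[Lemma 13.3.1]{MR798076}: a finitely generated torsion-free group with an abelian normal subgroup whose quotient is cyclic is left orderable. If you replace the asserted poly-$\Z$ claim by that citation (or by an actual proof of it), your argument becomes the paper's proof. A minor further remark: Theorem~\ref{thm:holonomy} is not needed for this corollary; one only needs that $\Soc(G(X,r))$ is abelian and normal with quotient $\mathcal{G}(X,r)$, that $G(X,r)$ is finitely generated and torsion-free, and then the equivalences of Theorem~\ref{thm:equivalences}.
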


\begin{proof}
	Since $X$ is finite, the group $G(X,r)$ is finitely generated. It is torsion-free and $\Soc(G(X,r))$ is
	an abelian normal subgroup such that
	\[
	G(X,r)/\Soc(G(X,r))\simeq\mathcal{G}(X,r)
	\]
	is cyclic. This implies that
	$G(X,r)$ is left orderable \cite[Lemma 13.3.1]{MR798076} and hence $(X,r)$
	is a multipermutation solution by Theorem~\ref{thm:equivalences}.
\end{proof}

Diffuse groups allow us to obtain
a generalization of Corollary~\ref{cor:cyclic}:

\begin{thm}
	\label{thm:cyclic_sylows}
	Let $(X,r)$ be a finite  involutive solution such that all Sylow subgroups of $\mathcal{G}(X,r)$ are cyclic. Then $(X,r)$ is a multipermutation solution.
\end{thm}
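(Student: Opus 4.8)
The aim is to prove that the structure group $G:=G(X,r)$ is left orderable; by Theorem~\ref{thm:equivalences} this is equivalent to $(X,r)$ being a multipermutation solution, and also to $G$ being diffuse and to $G$ being poly-$\Z$. By Theorem~\ref{thm:holonomy}, $G$ is an $n$-dimensional Bieberbach group, where $n=|X|$, with holonomy group $P:=\mathcal{G}(X,r)$ and translation subgroup $\Soc(G)\simeq\Z^n$; moreover, by Theorem~\ref{thm:ESS}, the holonomy representation of $P$ is the restriction to $\Soc(G)$ of the permutation representation of $P$ on $\Z^X$.

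The first ingredient is purely group-theoretic. A finite group all of whose Sylow subgroups are cyclic is a \emph{$Z$-group}: by the classical theorem of H\"older, Burnside and Zassenhaus it has a cyclic normal subgroup $C\trianglelefteq P$ with $P/C$ cyclic and $\gcd(|C|,|P/C|)=1$; in particular $P$ is supersolvable, and every subgroup of $P$ is again a $Z$-group. I would then argue by induction on $|P|$. If $|P|=1$ there is nothing to prove, and if $P$ is cyclic the claim is Corollary~\ref{cor:cyclic}. Otherwise, using supersolvability choose a normal subgroup $N\trianglelefteq P$ of prime index $q$ and set $H$ to be the preimage of $N$ under the projection $G\to P$, a normal subgroup of $G$ of index $q$. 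Then $H$ is torsion-free, contains $\Soc(G)$ with finite index, and $C_H(\Soc(G))=H\cap C_G(\Soc(G))=\Soc(G)$; hence $H$ is a Bieberbach group whose holonomy group $N$ is a $Z$-group with $|N|<|P|$, so $H$ is left orderable by the inductive hypothesis.

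It remains to promote left orderability from $H$ to $G$, where $G/H\simeq C_q$, and this is the delicate point: a torsion-free extension of a left-orderable group by a finite cyclic group need not be left orderable---the Promislow (Hantzsche--Wendt) group being the standard obstruction---so the cyclic-Sylow hypothesis (not merely solvability of $P$) must be used here. The obstruction is in fact absent: were $G$ to contain a subgroup $\Gamma$ isomorphic to the Promislow group, the image of $\Gamma$ in $P=G/\Soc(G)$ would be a finite group admitting the holonomy group $C_2\times C_2$ of $\Gamma$ as a quotient, which would force a non-cyclic Sylow $2$-subgroup of $P$. To convert this into a complete proof I see three possible routes: (i) appeal to a criterion characterizing diffuseness of structure groups of involutive solutions through the absence of Promislow subgroups (in the spirit of~\cite{LV3}) and conclude via Theorem~\ref{thm:equivalences}; (ii) build a left order by hand, using that the Sylow $q$-subgroup of $P$ is cyclic to choose $g\in G$ whose image in $P$ generates a Sylow $q$-subgroup and whose image in $G/H$ generates $C_q$, so that $\langle\Soc(G),g\rangle$ is a Bieberbach group with cyclic holonomy, hence left orderable, $G=\langle\Soc(G),g\rangle\cdot H$, and the two orders can be amalgamated along $\langle\Soc(G),g\rangle\cap H$; or (iii) exploit the permutation module structure: the splitting $\Soc(G)\otimes\Q=\Q\langle\sum_{x\in X}e_x\rangle\oplus W$ furnishes a surjection $G\to\Z$ whose kernel is a Bieberbach group of dimension $n-1$ with holonomy a subgroup of $P$, allowing the induction to be run on $n$. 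The main obstacle, on every route, is precisely the passage through the finite cyclic quotient $C_q$: one must upgrade the combinatorial hypothesis into an ordering-theoretic statement that is stable under this extension, and I expect essentially the entire difficulty of the proof to be concentrated there.
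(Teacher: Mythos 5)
There is a genuine gap, and you have in fact located it yourself: everything in your argument up to the reduction ``$G$ is a Bieberbach group with holonomy $P=\mathcal{G}(X,r)$, and it suffices to show $G$ is diffuse (equivalently left orderable)'' matches the paper, but the paper then closes the argument in one stroke by citing an external theorem of Kionke and Raimbault (\cite[Theorem 3.5]{MR3548136}): \emph{every} Bieberbach group whose holonomy group has only cyclic Sylow subgroups is diffuse. That theorem is the entire content of the step you flag as ``the delicate point,'' and your proposal does not supply a proof of it. Your inductive scheme via a normal subgroup $N\trianglelefteq P$ of prime index $q$ breaks exactly where you say it does: the Promislow group itself is a torsion-free extension of a Bieberbach group with cyclic holonomy $C_2$ (hence poly-$\Z$ and left orderable) by $C_q=C_2$, so no argument that only uses ``$H$ left orderable, $G/H\simeq C_q$, $G$ torsion-free'' can succeed, and the Sylow-cyclic hypothesis must enter in a structural way that your sketch never pins down.

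None of your three repair routes is carried out, and each has a real obstruction. Route (i) presupposes that absence of Promislow subgroups implies diffuseness for these groups; no such criterion is available (and Section~\ref{Promislow} of the paper treats the presence of Promislow subgroups as a \emph{sufficient} condition for failure of the unique product property, not as an equivalence with non-diffuseness). Route (ii) asserts that left orders on $\langle\Soc(G),g\rangle$ and on $H$ ``can be amalgamated along'' their intersection; left orders on subgroups generating a group do not amalgamate in general, and making this precise is not easier than the original problem. Route (iii) gives a surjection $G\to\Z$ only after tensoring with $\Q$, and even granting it, the kernel is again a Bieberbach group with the \emph{same} holonomy-type difficulty, so the induction on $n$ does not reduce the problem. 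Your side remark that a Promislow subgroup $\Gamma\le G$ would force a non-cyclic Sylow $2$-subgroup of $P$ is essentially correct (since $\Gamma\cap\Soc(G)$ is a finite-index normal abelian, hence translation, subgroup of $\Gamma$, the image of $\Gamma$ in $P$ surjects onto $C_2\times C_2$), but ruling out Promislow subgroups is far weaker than establishing diffuseness. In short: correct setup, correct identification of where the difficulty lies, but the theorem is not proved; the missing ingredient is precisely the cited result of Kionke--Raimbault.
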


\begin{proof}
    By Theorem~\ref{thm:holonomy}, the structure group $G(X,r)$ is a Bieberbach
	group with holonomy group isomorphic to $\mathcal{G}(X,r)$. Since all Sylow
	subgroups of $\mathcal{G}(X,r)$ are cyclic,	all Bieberbach groups with
	holonomy group isomorphic to $\mathcal{G}(X,r)$ are diffuse
	by~\cite[Theorem 3.5]{MR3548136}. In particular, $G(X,r)$ is diffuse and hence
	the claim follows from Theorem~\ref{thm:equivalences}.
\end{proof}

The converse of Theorem~\ref{thm:cyclic_sylows} does not hold:

\begin{exa}
	Let $X=\{1,2,3,4\}$ and $r(x,y)=(\varphi_x(y),\varphi_y(x))$, where
	\[
		\varphi_1=\varphi_2=\id,\quad
		\varphi_3=(34),\quad
		\varphi_4=(12)(34).
 	\]
	Then $(X,r)$ is an involutive multipermutation solution. One easily checks
	that $\mathcal{G}(X,r)\simeq C_2\times C_2$.
\end{exa}

Let us apply Theorem~\ref{thm:cyclic_sylows} to finite left braces.
The following result of Rump appears in~\cite[Proposition 7]{MR2278047} without the finiteness assumption:

\begin{lem}
    \label{lem:cjo}
    Let $A$ be a finite left brace. Then $(A,r_A)$ is an involutive solution
    such that $\mathcal{G}(A,r_A)\simeq A/\Soc(A)$.
\end{lem}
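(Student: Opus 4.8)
The plan is to produce the solution $(A, r_A)$ directly from the general construction associating a solution to a skew left brace, and then to identify its retract-theoretic data with the quotient $A/\Soc(A)$. First I would recall that for any skew left brace $A$ the map $r_A(a,b) = (\lambda_a(b), \lambda_a(b)' \circ a \circ b)$ is a set-theoretic solution to the YBE, and that $r_A^2 = \id$ precisely when $A$ is of abelian type; since $A$ is a left brace (abelian type by our convention), $(A, r_A)$ is an \emph{involutive} solution. Writing $r_A(a,b) = (\sigma_a(b), \tau_b(a))$ in the standard notation, we have $\sigma_a = \lambda_a$, so the left maps of the solution are exactly the $\lambda$-maps of the brace.

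Next I would work out the permutation group and the retraction. Since $\sigma_a = \lambda_a$, the permutation group $\mathcal{G}(A, r_A)$ is the subgroup of $\Sym_A$ generated by $\{\lambda_a : a \in A\}$, which is precisely the image $\lambda(A)$ of the homomorphism $\lambda\colon (A,\circ) \to \Aut(A,+)$. The kernel of $\lambda$ is $\ker\lambda$; I would check that on $A/\Soc(A)$ the induced $\lambda$-action is faithful. This uses that $\Soc(A) = \ker\lambda \cap Z(A,+)$, together with the fact that for a left brace the condition $\lambda_a = \id$ (i.e. $a \in \ker\lambda$) forces $a \in Z(A,+)$ as well — indeed $\lambda_a = \id$ gives $a \circ b = a + b$ for all $b$, and combined with the brace axiom one extracts that $a$ is additively central. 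Hence $\ker\lambda = \Soc(A)$, and $\mathcal{G}(A,r_A) = \lambda(A) \cong (A,\circ)/\ker\lambda = A/\Soc(A)$, where the last isomorphism is of groups (the additive and multiplicative cosets of the ideal $\Soc(A)$ coincide since $\Soc(A)$ is an ideal).

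The main obstacle, and the step requiring the most care, is the identification $\ker\lambda = \Soc(A)$: one must verify that $\lambda_a = \id$ implies additive centrality of $a$, which is where the \emph{abelian type} hypothesis is genuinely used — for a general skew left brace $\ker\lambda$ need not lie in $Z(A,+)$, and the lemma can fail. Concretely, from $\lambda_a = \id$ we get $a*b = 0$ for all $b$, and then the identity $a*(b+c) = a*b + b + a*c - b$ together with $(A,+)$ abelian gives no new information, so instead I would argue on the other side: for $b \in A$, compute $b + a$ versus $a + b$ using $b \circ c = b + \lambda_b(c)$ and the fact that $\lambda$ is a homomorphism with $\lambda_a$ trivial, reducing $[b,a]_+$ to an expression that vanishes because $(A,+)$ is abelian. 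Once $\ker\lambda \subseteq Z(A,+)$ is established, $\ker\lambda = \ker\lambda \cap Z(A,+) = \Soc(A)$ is immediate, and the statement $\mathcal{G}(A,r_A) \cong A/\Soc(A)$ follows. A citation to the relevant lemma in~\cite{MR3647970} or~\cite{csv} for $\ker\lambda = \Soc(A)$ in the abelian-type case would also suffice here in lieu of spelling out the computation.
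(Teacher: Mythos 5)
Your proof is correct and follows essentially the same route as the paper: identify $\mathcal{G}(A,r_A)$ with $\lambda(A)$ via $\sigma_a=\lambda_a$ and apply the first isomorphism theorem to $\lambda\colon(A,\circ)\to\Aut(A,+)$. One remark: the step you single out as ``the main obstacle,'' namely $\ker\lambda\subseteq Z(A,+)$, is vacuous here --- by the paper's convention a left brace is a skew left brace of abelian type, so $(A,+)$ is abelian, $Z(A,+)=A$, and $\Soc(A)=\ker\lambda\cap Z(A,+)=\ker\lambda$ with no computation at all. The paper's proof additionally observes that $\lambda$ is a homomorphism of left braces (with respect to the additive structure $\lambda_a+\lambda_b=\lambda_a\lambda_{\lambda_a^{-1}(b)}$ on $\mathcal{G}(A,r_A)$), which upgrades the conclusion to an isomorphism of left braces rather than merely of multiplicative groups; your group-level version suffices for the application in Theorem~\ref{thm:cyclic_sylows:brace}, but the brace-level statement is what the paper actually records.
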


\begin{proof}
    We only need to prove that $A/\Soc(A)\simeq\mathcal{G}(A,r_A)$. 
    The permutation group $\mathcal{G}(A,r_A)=\{\lambda_a:a\in A\}$ is a left brace where the 
    additive structure is given by 
    $\lambda_a+\lambda_b=\lambda_a\lambda_{\lambda^{-1}_a(b)}$ for $a,b\in A$. 
   This implies that the map $\lambda\colon (A,\circ)\to\Aut(A,+)$, $a\mapsto \lambda_a$, 
    is a left brace homomorphism and hence
    \[
    A/\Soc(A)\simeq\lambda(A)=\{\lambda_a:a\in A\}=\mathcal{G}(A,r_A)
    \]
    by the first isomorphism theorem. 
\end{proof}

As an application of Theorem~\ref{thm:cyclic_sylows} we obtain the following
result related to the structure of left braces:

\begin{thm}
	\label{thm:cyclic_sylows:brace}
	Let $A$ be a finite left brace. If all Sylow subgroups of the
	multiplicative group of $A$ are cyclic, then $A$ is right nilpotent. 
\end{thm}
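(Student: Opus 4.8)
The plan is to transfer the statement to the associated involutive solution and invoke Theorem~\ref{thm:cyclic_sylows}. By Lemma~\ref{lem:cjo}, $(A,r_A)$ is a finite involutive solution with $\mathcal{G}(A,r_A)\simeq A/\Soc(A)$. Since $\Soc(A)$ is an ideal, $A/\Soc(A)$ is a quotient of the multiplicative group $(A,\circ)$; in a finite group each Sylow subgroup of a quotient $G/N$ has the form $PN/N$ for $P$ a Sylow subgroup of $G$, hence is a homomorphic image of $P$, hence cyclic whenever $P$ is. Therefore all Sylow subgroups of $\mathcal{G}(A,r_A)$ are cyclic, and Theorem~\ref{thm:cyclic_sylows} shows that $(A,r_A)$ is a multipermutation solution.

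It remains to convert multipermutationality of $(A,r_A)$ into right nilpotency of $A$. Here I would use that for a left brace the additive group is abelian, so $Z(A,+)=A$ and $\Soc(A)=\ker\lambda$. Because $r_A(a,b)=(\lambda_a(b),\dots)$, the retraction relation $a\sim b\iff\sigma_a=\sigma_b$ is exactly $a'\circ b\in\ker\lambda=\Soc(A)$; using that $\Soc(A)$ is a $\lambda$-invariant ideal one checks $a\circ\Soc(A)=a+\Soc(A)$, so the $\sim$-classes are the cosets of $\Soc(A)$ and $\Ret(A,r_A)\cong(A/\Soc(A),r_{A/\Soc(A)})$. Iterating and matching the recursive definition of the socle series gives $\Ret^n(A,r_A)\cong(A/\Soc_n(A),r_{A/\Soc_n(A)})$ for all $n$. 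Hence $(A,r_A)$ is a multipermutation solution if and only if $A/\Soc_n(A)$ is trivial for some $n$, i.e. $A=\Soc_n(A)$ for some $n$. Since $A$ is of abelian type it is of nilpotent type, so Lemma~\ref{lem:soc_n_2} yields that $A$ is right nilpotent, as desired.

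The routine verifications (passing the Sylow hypothesis to the quotient, and the coset identities) are painless. The one step that needs care — essentially Rump's correspondence between retractability of $(A,r_A)$ and the socle series of $A$ from~\cite{MR2278047}, see also~\cite{csv} — is the identification $\Ret^n(A,r_A)\cong(A/\Soc_n(A),r_{A/\Soc_n(A)})$: one must check that the retraction of a brace's solution is again a brace's solution and that the governing congruence is exactly $\Soc$, reconciling the paper's definition $\Soc(A)=\ker\lambda\cap Z(A,+)$ with $\ker\lambda$ in the abelian-type case. Once that is in place the theorem is immediate; an alternative that sidesteps this bookkeeping is to cite directly that a finite left brace is right nilpotent if and only if its associated solution is a multipermutation solution.
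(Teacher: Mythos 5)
Your proposal is correct and follows the same route as the paper: pass the cyclic-Sylow hypothesis to the quotient, identify $\mathcal{G}(A,r_A)\simeq A/\Soc(A)$ via Lemma~\ref{lem:cjo}, and conclude that $(A,r_A)$ is a multipermutation solution by Theorem~\ref{thm:cyclic_sylows}. The only difference is in the final step: the paper simply cites \cite[Proposition 6]{MR3574204} for ``multipermutation implies right nilpotent,'' whereas you unpack that citation by identifying $\Ret^n(A,r_A)$ with the solution of $A/\Soc_n(A)$ and invoking Lemma~\ref{lem:soc_n_2}; your sketch of that identification is the standard argument and is sound.
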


\begin{proof}
    If $(A,\circ)$ has Sylow cyclic subgroups, then $(A/\Soc(A),\circ)$ has
    cyclic Sylow subgroups. By Lemma~\ref{lem:cjo}, 
    $A/\Soc(A)\simeq\mathcal{G}(A,r_A)$ as left braces. In particular, 
    $\mathcal{G}(A,r_A)$ has cyclic Sylow subgroups and
    therefore $(A,r_A)$ is a multipermutation solution 
    by Theorem~\ref{thm:cyclic_sylows}. Now the claim
	follows from~\cite[Proposition 6]{MR3574204}.
\end{proof}

The following consequence of
Theorem~\ref{thm:cyclic_sylows:brace} is immediate:

\begin{cor}
	Let $A$ be a non-trivial finite left brace. 
	If all Sylow subgroups of the
	multiplicative group are cyclic, then $A$ is not simple.
\end{cor}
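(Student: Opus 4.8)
The plan is to argue by contradiction, combining Theorem~\ref{thm:cyclic_sylows:brace} with the elementary interaction between simplicity and right nilpotency. Suppose $A$ is simple. Since all Sylow subgroups of the multiplicative group $(A,\circ)$ are cyclic, Theorem~\ref{thm:cyclic_sylows:brace} gives that $A$ is right nilpotent, so $A^{(n)}=0$ for some $n\geq1$; note $n\geq2$ because $A^{(1)}=A\neq0$.

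Next I would use the fact, recalled in Section~\ref{preliminaries}, that each term $A^{(n)}$ of the right series is an ideal of $A$. Since $A$ is simple its only ideals are $0$ and $A$, so in particular $A^{(2)}=A*A\in\{0,A\}$. If $A^{(2)}=A$, then an immediate induction using $A^{(m+1)}=A^{(m)}*A$ shows $A^{(m)}=A$ for every $m\geq1$, contradicting right nilpotency as $A\neq0$. Hence $A*A=0$.

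Finally, $A*A=0$ means $a*b=\lambda_a(b)-b=0$ for all $a,b\in A$, i.e. $\lambda_a=\id$ for every $a\in A$; then $a\circ b=a+\lambda_a(b)=a+b$ for all $a,b$, so $A$ is a trivial left brace, contradicting the hypothesis that $A$ is non-trivial. The argument is a short chain of implications and I do not anticipate any real obstacle; the only points to keep straight are excluding the degenerate index $n=1$ and correctly tracking the recursion $A^{(m+1)}=A^{(m)}*A$ when running the induction.
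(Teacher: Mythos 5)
Your argument is correct and is exactly the intended route: the paper labels this corollary ``immediate'' from Theorem~\ref{thm:cyclic_sylows:brace}, and the details you supply (right nilpotency forces the ideal $A*A$ to be $0$ rather than $A$, whence $\lambda_a=\id$ and $A$ is trivial) are the standard way to make that precise. No gaps.
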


It is natural to ask whether Theorems~\ref{thm:cyclic_sylows} and~\ref{thm:cyclic_sylows:brace} can be
proved for groups with abelian Sylow subgroups. The following example answers
this question negatively.

\begin{exa}\label{ex:simple_no_A-group}
    There exists a unique simple left brace of size $72$, see~\cite[Remark 4.5]{cjo} and ~\cite[Proposition 4.3]{ksv}. The multiplicative group of this left brace 
	is isomorphic to $\Alt_4\times\Sym_3$ and therefore all of its Sylow
	subgroups are abelian.  Since the socle of this left brace is trivial, the canonical solution to the YBE associated with this left brace is not a multipermutation solution (moreover, it is irretractable).
\end{exa}


In Section~\ref{right}, 
using the techniques of~\cite{mbbr} and skew left braces of nilpotent type 
we will generalize the results of this section to non-involutive solutions. 

\begin{exa}
Let $G=\{g_j: j\in \mathbb{Z}/8\mathbb{Z}\}$. The operations
\[
g_i+g_j=g_{i+(-1)^ij},\quad
g_i\circ g_j=g_{i+j}
\]
turns $G$ into a skew left brace with multiplicative group isomorphic to the cyclic group $C_8$ of eight elements and nilpotent (non-abelian) additive group isomorphic to the dihedral group $\D_8$ of eight elements. A direct calculation shows that $G$ is right nilpotent.
\end{exa}

\section{Groups with the unique product property}
\label{UPP}

This section is devoted to study the unique product property in structure
groups of involutive solutions.  Recall that a group $G$ has the \emph{unique product property} if 
for all finite non-empty subsets $A$ and $B$ of $G$ there exists $x\in G$ 
that can be written uniquely as $x = ab$ with $a\in A$ and $b\in B$. 
We refer to~\cite{MR798076} for more information related to the unique product property. 

It is natural to ask when $G(X,r)$ has the unique product property, see~\cite[Section 8]{LV3}. 
If $(X,r)$ is a multipermutation solution, then $G(X,r)$ has the unique product property since $G(X,r)$ is left orderable.

All involutive solutions of size $\leq 8$ were constructed by
Etingof, Schedler and Soloviev in~\cite{MR1722951}. There are $38698$ solutions
and among them only $2583$ are not multipermutation solutions, see Table~\ref{tab:MP}.  Our aim is to
know when the structure group of a not multipermutation involutive 
solution does not have the unique product property. We start
with the following observation made by Jespers and
Okni\'nski:

\begin{pro}
	\label{pro:4-13}
	Let $X=\{1,2,3,4\}$ and $r(x,y)=(\sigma_x(y),\tau_y(x))$ be the irretractable involutive solution given by
	\begin{align*}
		&\sigma_1=(34), && \sigma_2=(1324), && \sigma_3=(1423), && \sigma_4=(12),\\
		&\tau_1=(24), &&\tau_2=(1432), && \tau_3=(1234), && \tau_4=(13).
	\end{align*}
	The structure group $G(X,r)$ with generators
	$x_1,x_2,x_3,x_4$ and relations
	\begin{align*}
		& x_1x_2=x_2x_4,
		&& x_1x_3=x_4x_2,
		&& x_1x_4=x_3^2,\\
		& x_2x_1=x_3x_4,
		&& x_2^2=x_4x_1,
		&& x_3x_1=x_4x_3.
	\end{align*}
	does not have the unique product property.
\end{pro}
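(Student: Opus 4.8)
The plan is to exhibit an explicit finite subset $S\subseteq G(X,r)$ witnessing the failure of the unique product property, i.e. a set $S$ such that every element of $S\cdot S$ has at least two representations $s s'$ with $s,s'\in S$. Concretely, I would work inside the faithful matrix representation of Theorem~\ref{thm:ESS}: under $x\mapsto(\sigma_x,t_x)$, each generator $x_i$ becomes the affine isometry $(\sigma_i, e_i)\in\Sym_4\ltimes\Z^4$, and words in the $x_i$ become products $(\pi, v)$ which are easy to multiply via $(A,a)(B,b)=(AB,a+Ab)$. The first step is therefore to fix notation for this embedding and record the images of a small list of short words (say of length $\le 3$ in the $x_i^{\pm1}$) as pairs (permutation, integer vector).

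Second, I would select a candidate set $S$ — the natural guess is that $G(X,r)$ contains a copy of the Promislow (Passman--Rips) group $P$, which is the prototypical torsion-free group without the unique product property, and that the witnessing $14$-element subset of $P$ pulls back to a witnessing subset of $G(X,r)$. So the concrete task is: find elements $a,b\in G(X,r)$ generating a subgroup isomorphic to $P$ (equivalently, $a^2$ and $b^2$ commute and generate a rank-$2$ free abelian normal subgroup on which $a,b$ act by the appropriate sign matrices, with $(ab)^2=a^2b^{-2}$ or the relevant relation). Using the matrix picture this is a finite check: compute $a^2,b^2,(ab)^2$ as affine maps and verify the defining relations of $P$ hold and that no nontrivial torsion or extra collapse occurs (the latter is automatic since $G(X,r)$ is torsion-free by~\cite{MR1637256}). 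Then one imports the classical $14$-element non-UP subset of $P$ verbatim.

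Third, having the subgroup $P\le G(X,r)$ and the classical non-UP set $S_0\subseteq P$, I would simply verify directly in $G(X,r)$ (again via the matrices) that $S_0$ still fails the unique product property there — since $S_0\subseteq P$ and the group operation of $P$ is induced from $G(X,r)$, the double representations in $P$ remain double representations in $G(X,r)$, so this step is essentially free once the embedding is confirmed. Finally I would translate the elements of $S_0$ back into words in $x_1,x_2,x_3,x_4$ using the recorded dictionary, so the statement can be presented purely group-theoretically without reference to matrices.

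The main obstacle is the middle step: actually producing the pair $a,b$ that generates a Promislow subgroup. This is not conceptually deep but requires either a lucky guess or a systematic search through short words in the $x_i$ for two elements whose squares commute and span a rank-$2$ lattice with the right $(-1)$-eigenstructure under conjugation; getting the holonomy part (the $\Sym_4$ component) of $a$ and $b$ to act as the two commuting involutions $\mathrm{diag}(-1,-1,1)$-type matrices that define $P$ is the delicate bookkeeping. I expect one can lean on the structure of $\mathcal{G}(X,r)$ computed via Theorem~\ref{thm:holonomy} to locate a Klein four-subgroup of the holonomy group acting with the needed fixed spaces, and then lift; alternatively a short computer search over words of bounded length settles it immediately, which is in the spirit of the computational results announced in the introduction.
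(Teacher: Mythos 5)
Your strategy is exactly the one behind the paper's proof: the paper simply cites Jespers--Okni\'nski \cite[Example 8.2.14]{MR2301033}, who prove the statement by exhibiting a Promislow subgroup of $G(X,r)$, and the paper notes immediately afterwards that the $14$-element-set computation it carries out for the companion solution (Proposition~\ref{pro:4-19}) applies here as well. Two minor points: the maximal abelian normal subgroup of the Promislow group is $\langle x^2,y^2,(xy)^2\rangle$, of rank $3$ (not the rank-$2$ group $\langle a^2,b^2\rangle$), and the relations to verify are $x^{-1}y^2x=y^{-2}$ and $y^{-1}x^2y=x^{-2}$; moreover your plan still owes the explicit generating pair, which (as you say, and as the paper does for other solutions) a bounded search over short words in the $x_i^{\pm1}$ supplies.
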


\begin{proof}
    See~\cite[Example 8.2.14]{MR2301033}.
\end{proof}

To prove Proposition~\ref{pro:4-13} Jespers and Okni\'nski found
a subgroup of the structure group isomorphic to the Promislow subgroup. 
This idea motivates the results of this section. 

\begin{table}
    \centering
    \caption{The number of (not multipermutation) involutive solutions.}
    \begin{tabular}{|c|c|c|c|c|c|c|c|c|}
        \hline
        $n$ & 1 & 2 & 3 & 4 & 5 & 6 & 7 & 8\\
        \hline
        solutions &  1 & 2 & 5 & 23 & 88 & 595 & 3456 & 34528\\
        not multipermutation & 0 & 0 & 0 & 2 & 4 & 41 & 161 & 2375\\
        \hline
    \end{tabular}
    \label{tab:MP}
\end{table}

\begin{pro}
    \label{pro:4-19}
	Let $X=\{1,2,3,4\}$ and $r(x,y)=(\sigma_x(y),\tau_y(x))$ be the irretractable involutive solution given by 
	\begin{align*}
		&\sigma_1=(12), && \sigma_2=(1324), && \sigma_3=(34), && \sigma_4=(1423),\\
		&\tau_1=(14), &&\tau_2=(1243), && \tau_3=(23), && \tau_4=(1342).
	\end{align*}
	Then the group $G(X,r)$ with 
	generators
	$x_1,x_2,x_3,x_4$ and relations
	\begin{align*}
		& x_1^2=x_2x_4,
		&& x_1x_3=x_3x_1,
		&& x_1x_4=x_4x_3,\\
		& x_2x_1=x_3x_2,
		&& x_2^2=x_4^2,
		&& x_3^2=x_4x_2.
	\end{align*}
	does not have the unique product property. 
\end{pro}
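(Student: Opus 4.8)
The plan is to exhibit inside $G(X,r)$ a subgroup isomorphic to the Promislow group and then to conclude, exactly as in Proposition~\ref{pro:4-13}, from the two facts that the unique product property passes to subgroups (see~\cite{MR798076}) and that the Promislow group does not have the unique product property. So the whole problem is to locate a Promislow subgroup.

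I will work with the faithful realisation of $G(X,r)$ provided by Theorem~\ref{thm:ESS}: the assignment $x_i\mapsto(\sigma_{x_i},t_i)$, with $t_i$ the $i$-th free generator of $\Z^X$, extends to an embedding $G(X,r)\hookrightarrow M_X=\Sym_X\ltimes\Z^X$, and under the projection $p\colon M_X\to\Sym_X$ one has $p(G(X,r))=\mathcal{G}(X,r)=\langle(12),(1324)\rangle$, the dihedral group of order $8$; by Theorem~\ref{thm:holonomy} this is the holonomy group of the Bieberbach group $G(X,r)$. Now recall that the Promislow group is, up to isomorphism, the unique $3$-dimensional Bieberbach group whose holonomy group is $C_2\times C_2$ and whose holonomy representation is the Hantzsche--Wendt one, in which the three involutions act on $\Z^3$ as $\operatorname{diag}(1,-1,-1)$, $\operatorname{diag}(-1,1,-1)$ and $\operatorname{diag}(-1,-1,1)$ (see~\cite{MR2978307}). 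Hence it suffices to produce inside $G(X,r)$ a torsion-free subgroup of Hirsch length $3$ of this type.

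The crucial point is the choice of generators. The translation part of any $x_i$ has coordinate sum $1$, and more generally a word in the $x_i$ has translation part of coordinate sum equal to its signed length; hence no positive word can lie in the $\mathcal{G}(X,r)$-invariant sublattice $L=\{a\in\Z^X:\sum_j a_j=0\}$ on which a Hantzsche--Wendt action must be supported, and one is forced to use balanced words such as $x_ix_j^{-1}$. A convenient choice is
\[
u=x_1x_3^{-1},\qquad v=x_1x_2^{-1}.
\]
A direct computation in $M_X$ shows that $p(u)=(12)(34)$ and $p(v)=(14)(23)$, so $\langle p(u),p(v)\rangle=\{e,(12)(34),(13)(24),(14)(23)\}\cong C_2\times C_2$; this is the only Klein four subgroup of $D_8=\mathcal{G}(X,r)$ whose permutation action on $L\otimes\Q$ is the Hantzsche--Wendt representation (the other one, $\langle(12),(34)\rangle$, misses one of the three nontrivial characters and has a $2$-dimensional nontrivial part only). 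Computing $u^2$, $v^2$ and $(uv)^2$ one finds that they are pure translations which, together with their conjugates by $u$ and $v$, generate a rank-$3$ sublattice $L_0\subseteq L$ that is $\langle p(u),p(v)\rangle$-stable and on which $\langle p(u),p(v)\rangle$ acts as the sum of the three nontrivial characters; one also checks that $L_0=\langle u,v\rangle\cap\Z^X$. Therefore $\langle u,v\rangle$ is a torsion-free $3$-dimensional Bieberbach group with holonomy $C_2\times C_2$ and Hantzsche--Wendt holonomy representation, hence is the Promislow group, and the proposition follows.

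The main obstacle is this last identification, in two respects. First, one must choose $u$ and $v$ so that $L_0=\langle u,v\rangle\cap\Z^X$ has rank $3$: many natural balanced choices with Klein four holonomy (for instance $u=x_1x_2$, $v=x_4x_3$) produce a rank-$4$ translation lattice and hence a $4$-dimensional crystallographic group, of no use here. Second, one must be sure that the $3$-dimensional Bieberbach group so obtained really is the Promislow group and not another flat $3$-manifold group; this is guaranteed by the holonomy representation being the Hantzsche--Wendt one, but in practice it is just as clean to fix $u$ and $v$ as above and verify on the pair $(u,v)$ a known presentation of the Promislow group, using the faithfulness of the Etingof--Schedler--Soloviev representation to rule out any further relations. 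Everything else is a routine computation inside $\Sym_X\ltimes\Z^X$.
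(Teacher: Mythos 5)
Your argument is correct, but it takes a genuinely different route from the paper's proof of this proposition. The paper works with the very same two elements (there $x=x_1x_2^{-1}$ and $y=x_1x_3^{-1}$, your $v$ and $u$), but instead of identifying the subgroup they generate, it takes Promislow's explicit $14$-element set $S$ built from $x$ and $y$ and verifies directly, via the matrix representation in $\GL(5,\Z)$ from Theorem~\ref{thm:ESS}, that every element of $S^2$ admits two factorizations; this is a self-contained finite computation that never needs to know what $\langle x,y\rangle$ is up to isomorphism. You instead show that $\langle u,v\rangle$ is a $3$-dimensional Bieberbach group with Klein four holonomy acting by the Hantzsche--Wendt representation, identify it with the Promislow group, and quote Promislow's theorem together with the fact that the unique product property is inherited by subgroups. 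Your computations check out: $u^2=(I,e_1+e_2-e_3-e_4)$, $v^2=(I,e_1-e_2-e_3+e_4)$, $(uv)^2=(I,e_1-e_2+e_3-e_4)$ are independent pure translations, and $u,v$ satisfy the Promislow relations with nontrivial holonomy images, so the identification is legitimate (your caveat that the holonomy representation, not just the holonomy group, must be pinned down is essential, since there are also two non-orientable flat $3$-manifold groups with holonomy $C_2\times C_2$). What each approach buys: the paper's verification avoids any appeal to the classification of flat $3$-manifolds, while yours is more conceptual, explains where the set $S$ comes from, and proves the stronger statement that $G(X,r)$ contains a Promislow subgroup --- which is exactly the strategy the paper itself formalizes in Section~\ref{Promislow} (Lemma~\ref{lem:integral_sol} and Proposition~\ref{prop:existence_promislow}) and records for this solution in Theorem~\ref{thm:P}; citing Proposition~\ref{prop:existence_promislow} would in fact let you skip the classification argument entirely. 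One small caution: merely ``verifying a known presentation on $(u,v)$'' only produces a quotient of the Promislow group, so the rank-$3$ computation for the translation lattice is not optional --- it is the step that rules out collapse.
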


\begin{proof}
    Let $x=x_1x_2^{-1}$ and $y=x_1x_3^{-1}$ and 
    \begin{multline}
    \label{eq:Promislow}
    S=\{ x^2y,
    y^2x,
    xyx^{-1},
    (y^2x)^{-1},
    (xy)^{-2},
    y,
    (xy)^2x,
    (xy)^2,\\
    (xyx)^{-1},
    yxy,
    y^{-1},
    x,
    xyx, 
    x^{-1}
	\}.
    \end{multline}
    To prove that $G(X,r)$ does not have
    the unique product property it is enough to prove that 
    each $s\in S^2=\{s_1s_2:s_1,s_2\in S\}$ admits at least two different decompositions 
    of the form $s=ab=uv$ for $a,b,u,v\in S$. To perform these calculations we 
    use the injective group homomorphism $G\to\GL(5,\Z)$ of Theorem~\ref{thm:ESS}, 
	\begin{align*}
	&x_1\mapsto\left(\begin{smallmatrix}
	0 & 1 & 0 & 0 & 1\\
	1 & 0 & 0 & 0 & 0\\
	0 & 0 & 1 & 0 & 0\\
	0 & 0 & 0 & 1 & 0\\
	0 & 0 & 0 & 0 & 1
  	\end{smallmatrix}\right),
  	&&
	x_2\mapsto\left(\begin{smallmatrix}
	0 & 0 & 0 & 1 & 0\\
	0 & 0 & 1 & 0 & 1\\
	1 & 0 & 0 & 0 & 0\\
	0 & 1 & 0 & 0 & 0\\
	0 & 0 & 0 & 0 & 1
  	\end{smallmatrix}\right),
  	\\
  	&x_3\mapsto\left(\begin{smallmatrix}
	1 & 0 & 0 & 0 & 0\\
	0 & 1 & 0 & 0 & 0\\
	0 & 0 & 0 & 1 & 1\\
	0 & 0 & 1 & 0 & 0\\
	0 & 0 & 0 & 0 & 1
  	\end{smallmatrix}\right),
  	&&
  	x_4\mapsto\left(\begin{smallmatrix}
	0 & 0 & 1 & 0 & 0\\
	0 & 0 & 0 & 1 & 0\\
	0 & 1 & 0 & 0 & 0\\
	1 & 0 & 0 & 0 & 1\\
	0 & 0 & 0 & 0 & 1
  	\end{smallmatrix}\right).
  	\end{align*}
  	This faithful representation of $G(X,r)$ allows us to compute all possible products of the form $s_1s_2$ for all $s_1,s_2\in S$. By inspection, each element of $S^2$ admits at least two different representations.
\end{proof}

\begin{rem}
    The solutions of Propositions~\ref{pro:4-13} and~\ref{pro:4-19}
    are the only two involutive solutions of size four that are not multipermutation solutions. Therefore
    structure groups of involutive solutions of size four that are not multipermutation solutions do not have the unique product
    property. 
\end{rem}

\begin{rem}
    The set~\eqref{eq:Promislow} appears in the work of Promislow~\cite{MR940281}.
\end{rem}

\begin{rem}
    The technique used to prove Proposition~\ref{pro:4-19} could be used to 
    prove Proposition~\ref{pro:4-13}. 
\end{rem}


\begin{pro}
    \label{pro:size7}
    Let $G(X,r)$ be the structure group of a not multipermutation involutive solution
    of size $\leq7$. Then $G(X,r)$ does not have the unique product property.
\end{pro}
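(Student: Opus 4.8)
The plan is to reduce the proposition to a finite verification. The unique product property is inherited by subgroups, so it suffices to produce, inside the structure group of each not-multipermutation involutive solution of size $\le 7$, a subgroup isomorphic to the Promislow group; concretely, following the proof of Proposition~\ref{pro:4-19}, I would look for two elements $x,y\in G(X,r)$ such that the set $S$ of \eqref{eq:Promislow} (formed from $x$ and $y$) has the property that every element of $S^2$ is expressible in at least two ways as a product of two elements of $S$. Solutions of size $\le 3$ are all multipermutation, and by Table~\ref{tab:MP} there are $2+4+41+161=208$ not-multipermutation solutions of sizes $4$ through $7$; Propositions~\ref{pro:4-13} and~\ref{pro:4-19} already settle the two of size $4$, leaving $206$ solutions of sizes $5,6,7$.

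For each of these I would enumerate the solution from the classification of Etingof, Schedler and Soloviev, realize $G(X,r)$ inside $\GL(n+1,\Z)$ via the faithful representation of Theorem~\ref{thm:ESS}, so that words in the generators $x_1^{\pm1},\dots,x_n^{\pm1}$ become explicit integer matrices, and then search for a suitable pair $(x,y)$ of short words in these generators. A natural way to guide the search is through the Bieberbach structure of Theorem~\ref{thm:holonomy}: the Promislow group has holonomy $C_2\times C_2$, so I would first pick $\bar x,\bar y\in\mathcal G(X,r)$ generating a Klein four-subgroup with $\bar x^2=\bar y^2=1$ and $\bar x\bar y=\bar y\bar x$, lift them to $G(X,r)$, and then adjust the lifts by translations in $\Soc(G(X,r))$ until the defining relations of the Promislow group are satisfied. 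Once a candidate $(x,y)$ is in hand, forming $S$ and checking that each of the (at most $14^2$) elements of $S^2$ has two factorizations over $S$ is a finite matrix computation, exactly as in Proposition~\ref{pro:4-19}.

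The main obstacle is that this is genuinely a computer-assisted case analysis. By Theorem~\ref{thm:equivalences}, multipermutation solutions already have left-orderable, hence unique-product, structure groups, so the present statement is the (harder) converse direction, and it is not known to follow from any general principle: there is no uniform, solution-independent choice of $(x,y)$, and the content of the proposition is precisely that the search succeeds for every one of the $206$ solutions. This is not a formality---at size $8$ there are eight not-multipermutation solutions for which this approach fails (cf.\ Section~\ref{Promislow})---so one must actually carry out the search and record a witnessing pair $(x,y)$ for each solution of size $\le 7$. The systematic algorithm underlying such a search, valid for an arbitrary Bieberbach group, is what Section~\ref{Promislow} develops; for the present proposition it is enough to tabulate the witnesses, or alternatively to invoke that algorithm directly.
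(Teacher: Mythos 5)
Your proposal is correct and is essentially the paper's own proof: a case-by-case computation over the $206$ remaining non-multipermutation solutions from the Etingof--Schedler--Soloviev classification, using the faithful representation of Theorem~\ref{thm:ESS} to verify that the Promislow set~\eqref{eq:Promislow} built from a witnessing pair $(x,y)$ has no unique product in $S^2$. The only difference is cosmetic: the paper finds the witnesses by random search rather than by your holonomy-guided lifting, and you correctly identify that the content of the statement is precisely that such a search succeeds in every case.
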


\begin{proof}
    The proof is a case-by-case analysis 
    using the technique used to prove Proposition~\ref{pro:4-19}
    and the list of solutions of size $\leq7$ of~\cite{MR1722951}. 
    In several cases, the elements $x$ and $y$ that realize the set~\eqref{eq:Promislow} were found after a random search. 
\end{proof}

In principle, the argument used to prove Propositions~\ref{pro:4-19},~\ref{pro:size7} and~\ref{pro:GI} could be used for solutions of size eight. The following solution appeared
in~\cite{MR3437282} as a counterexample to a conjecture of Gateva--Ivanova related to the retractability of square-free solutions, see~\cite[2.28(1)]{MR2095675}.

\begin{pro}
    \label{pro:GI}
	Let $X=\{1,\dots,8\}$ and $r(x,y)=(\varphi_x(y),\varphi_y(x))$ be the irretractable involutive solution given by
	\begin{align*}
		& \varphi_1=(78), && \varphi_2=(56), && \varphi_3=(25)(46)(78), && \varphi_4=(17)(38)(56),\\
		& \varphi_5=(24), && \varphi_6=(17)(24)(38), && \varphi_7=(13), && \varphi_8=(13)(25)(46).
	\end{align*}
	Then $G(X,r)$ does not have the unique 
	product property.
\end{pro}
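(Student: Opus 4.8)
The plan is to mimic the strategy of Proposition~\ref{pro:4-19} as closely as possible: exhibit a subgroup of $G(X,r)$ isomorphic to the Promislow group and use the explicit $14$-element set $S$ from~\eqref{eq:Promislow} to defeat the unique product property. First I would write down the structure group relations determined by the eight permutations $\varphi_1,\dots,\varphi_8$, then pass to the faithful linear representation $G(X,r)\hookrightarrow\GL(9,\Z)$ furnished by Theorem~\ref{thm:ESS}: each generator $x_i$ maps to the $9\times 9$ integral matrix $\left(\begin{smallmatrix}P_{\varphi_i} & e_i\\ 0 & 1\end{smallmatrix}\right)$, where $P_{\varphi_i}$ is the permutation matrix of $\varphi_i$ acting on $\Z^8$ and $e_i$ is the $i$-th standard basis vector. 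Working inside $\GL(9,\Z)$ makes all group-element equalities decidable by matrix arithmetic.

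Next I would search for elements $x,y\in G(X,r)$ generating a copy of the Promislow group, i.e.\ satisfying the defining relations of that group (the fundamental group of the Hantzsche--Wendt manifold): $x$ and $y$ both of infinite order, $x^{2}$ and $y^{2}$ commuting, and the conjugation relations $x y x^{-1}=y^{-1}$-type identities that pin down the Hantzsche--Wendt presentation. Following the remark after Proposition~\ref{pro:size7}, such $x$ and $y$ are most plausibly located by a random search among short words in the $x_i^{\pm1}$, checking the candidate relations on the matrices. Once $x,y$ are fixed, I would form the set $S\subset G(X,r)$ exactly as in~\eqref{eq:Promislow} and compute the multiset $S\cdot S=\{s_1 s_2: s_1,s_2\in S\}$ via the $9\times 9$ matrices; the verification that every element of $S\cdot S$ has at least two distinct factorizations $ab=uv$ with $a,b,u,v\in S$ is then a finite (if tedious) check, and it certifies that $G(X,r)$ fails the unique product property because $S$ is a finite non-empty subset with no uniquely represented product in $S\cdot S$.

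The main obstacle is the search step: unlike the size-four case, there is no a priori reason that a Promislow subgroup sits visibly among short words, and for this particular size-$8$ solution the holonomy group $\mathcal{G}(X,r)$ (computed via Theorem~\ref{thm:holonomy}) must be checked to actually contain a quotient compatible with a Hantzsche--Wendt-type holonomy $C_2\times C_2$; if the naive random search fails, one would instead analyse the socle $\Soc(G(X,r))$ as the translation lattice $\Z^8$, identify a $C_2\times C_2$ subgroup of the holonomy acting on a rank-$3$ sublattice in the Hantzsche--Wendt pattern, and lift generators of that action to $G(X,r)$. Everything after producing valid $x,y$ is mechanical verification on integer matrices, entirely parallel to the proof of Proposition~\ref{pro:4-19}, so I do not expect any conceptual difficulty beyond the existence of the subgroup itself.
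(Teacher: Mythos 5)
Your proposal follows essentially the same route as the paper: embed $G(X,r)$ into $\GL(9,\Z)$ via Theorem~\ref{thm:ESS}, locate two elements satisfying the Promislow relations by a random search over short words, and then verify mechanically that every element of $S^2$ for the set $S$ of~\eqref{eq:Promislow} admits two factorizations. The only missing ingredient is the concrete witnesses, which the paper supplies as $x=x_4x_2^{-1}x_1x_3^{-1}$ and $y=x_1x_2^{-1}x_3x_1^{-1}x_4x_1^{-1}$ (found, exactly as you anticipate, by random search), so the search step you flag as the main obstacle does succeed.
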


\begin{proof}
    Let $x=x_4x_2^{-1}x_1x_3^{-1}$ and
	$y=x_1x_2^{-1}x_3x_1^{-1}x_4x_1^{-1}$. (These elements were found after a random search.) 
	The injective
	group homomorphism $G(X,r)\to\GL(9,\Z)$ of Theorem~\ref{thm:ESS} 
	allows us to use the set~\eqref{eq:Promislow} to
	prove that $G(X,r)$ does not have the unique
	product property. 
\end{proof}

There are solutions of size eight where our technique 
does not seem to work. One of these solutions appears in the following example:

\begin{exa}
\label{exa:15579}
Let $X=\{1,\dots,8\}$ and $r(x,y)=(\sigma_x(y),\tau_y(x))$, where 
\begin{align*}
&\sigma_1=\sigma_2=(3745), && \tau_1=\tau_2=(3648),\\
&\sigma_3=\sigma_4=(1826), && \tau_3=\tau_4=(1527),\\
&\sigma_5=\sigma_7=(13872465), && \tau_5=\tau_7=(16542873),\\
&\sigma_6=\sigma_8=(17842563), && \tau_6=\tau_8=(13562478).
\end{align*}
Then $(X,r)$ is an involutive solution 
that retracts to the solution of Proposition~\ref{pro:4-13}. In particular, $(X,r)$ is not a multipermutation solution. 
\end{exa}

Table~\ref{tab:4-13} shows four involutive solutions that retract 
to the solution of Proposition~\ref{pro:4-13} and where 
our technique does not seem to work; the solution of Example~\ref{exa:15579} is the first entry of Table~\ref{tab:4-13}. In Table~\ref{tab:4-19} one finds four involutive solutions
that retract to the solution of Proposition~\ref{pro:4-19} and where our technique 
does not seem to work. We do not know whether the structure groups of the solutions 
of Tables~\ref{tab:4-13} and~\ref{tab:4-19}
have the unique product property. 

\begin{table}
\caption{Some solutions that retract to the solution of Proposition~\ref{pro:4-13}.}
\begin{center}
\begin{tabular}{|c|c|c||c|c|} 
\hline
$x$ & $\sigma_x$ & $\tau_x$ & $\sigma_x$ & $\tau_x$\\
\hline
1 & $(3745)$ & $(3648)$ & $(3745)(68)$ & $(3648)(57)$\\
2 & $(3745)$ & $(3648)$ & $(3745)(68)$ & $(3648)(57)$\\
3 & $(1826)$ & $(1527)$ & $(1826)(57)$ & $(1527)(68)$\\
4 & $(1826)$ & $(1527)$ & $(1826)(57)$ & $(1527)(68)$\\
5 & $(13872465)$ & $(16542873)$ & $(1465)(2387)$ & $(1654)(2873)$\\
6 & $(17842563)$ & $(13562478)$ & $(1784)(2563)$ & $(1478)(2356)$\\
7 & $(13872465)$ & $(16542873)$ & $(1465)(2387)$ & $(1654)(2873)$\\
8 & $(17842563)$ & $(13562478)$ & $(1784)(2563)$ & $(1478)(2356)$\\
\hline
1 & $(12)(4675)$ & $(12)(3685)$ & $(12)(35)(4867)$ & $(12)(3857)(46)$\\ 
2 & $(12)(4675)$ & $(12)(3685)$ & $(12)(35)(4867)$ & $(12)(3857)(46)$\\
3 & $(1435)(2786)$ & $(1578)(2643)$ & $(16582437)$ & $(17652843)$\\
4 & $(1587)(2634)$ & $(1345)(2876)$ & $(17562834)$ & $(15682347)$\\
5 & $(1823)(56)$ & $(1724)(56)$ & $(16582437)$ & $(17652843)$\\
6 & $(1823)(56)$ & $(1724)(56)$ & $(17562834)$ & $(15682347)$\\
7 & $(1587)(2634)$ & $(1345)(2876)$ & $(1325)(46)(78)$ & $(1426)(35)(78)$\\
8 & $(1435)(2786)$ & $(1578)(2643)$ & $(1325)(46)(78)$ & $(1426)(35)(78)$\\
\hline
\end{tabular}
\end{center}
\label{tab:4-13}
\end{table}

\begin{table}
\caption{Some solutions that retract to the solution of Proposition~\ref{pro:4-19}.}
\begin{center}
\begin{tabular}{|c|c|c||c|c|} 
\hline
$x$ & $\sigma_x$ & $\tau_x$ & $\sigma_x$ & $\tau_x$\\
\hline
1 & $(12)(78)$ & $(14)(67)$ & $(12)(35)(46)(78)$ & $(14)(28)(35)(67)$\\
2 & $(1584)(2673)$ & $(1265)(3784)$ & $(1324)(5867)$ & $(1243)(5786)$\\
3 & $(34)(56)$ & $(23)(58)$ & $(17)(28)(34)(56)$ & $(17)(23)(46)(58)$\\
4 & $(1485)(2376)$ & $(1562)(3487)$ & $(1423)(5768)$ & $(1342)(5687)$\\
5 & $(34)(56)$ & $(23)(58)$ & $(17)(28)(34)(56)$ & $(17)(23)(46)(58)$\\
6 & $(1485)(2376)$ & $(1562)(3487)$ & $(1423)(5768)$ & $(1342)(5687)$\\ 
7 & $(12)(78)$ & $(14)(67)$ & $(12)(35)(46)(78)$ & $(14)(28)(35)(67)$\\
8 & $(1584)(2673)$ & $(1265)(3784)$ & $(1324)(5867)$ & $(1243)(5786)$\\
\hline
1 & $(13687542)$ & $(13867524)$& $(1652)$ & $(1854)$\\
2 & $(17)(2583)(46)$ & $(1278)(35)(46)$ & $(17645328)$ & $(12835647)$\\
3 & $(18657243)$ & $(16857423)$ & $(3874)$ & $(2367)$\\
4 & $(1476)(28)(35)$ & $(17)(28)(3456)$ & $(14635827)$ & $(17825346)$\\
5 & $(18657243)$ & $(16857423)$ & $(1652)$ & $(1854)$\\
6 & $(1476)(28)(35)$ & $(17)(28)(3456)$ & $(17645328)$ & $(12835647)$\\
7 & $(13687542)$ & $(13867524)$ & $(3874)$ & $(2367)$\\
8 & $(17)(2583)(46)$ & $(1278)(35)(46)$ & $(14635827)$ & $(17825346)$\\
\hline
\end{tabular}
\end{center}
\label{tab:4-19}
\end{table}

\section{Finding Promislow subgroups}
\label{Promislow}

In this section we explain the general theory we will use to find subgroups isomorphic to the Promislow group in a given Bieberbach group. The Promislow group was the first example of a torsion-free group that does not have  the unique product property, see~\cite{MR940281}.

\begin{lem}
    Let $P$ be the Promislow group
        \[
        \langle x,y \mid x^{-1}y^2x=y^{-2}, y^{-1}x^2y=x^{-2} \rangle.
        \]
    Then $A=\langle x^2, y^2, (xy)^2\rangle$ is a normal free abelian subgroup of $P$ of rank 3 with $P/A$ isomorphic to the Klein group. Furthermore, $P$ is torsion-free and not left orderable. 
\end{lem}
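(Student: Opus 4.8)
The plan is to work with the well-known Promislow (Passman) group and verify each assertion by a concrete presentation calculation. First I would exhibit the subgroup $A=\langle x^2,y^2,(xy)^2\rangle$ as a normal subgroup of index $4$. Normality follows directly from the defining relations: $x^{-1}y^2x=y^{-2}\in A$ and $y^{-1}x^2y=x^{-2}\in A$ handle conjugation of the first two generators of $A$ by $x$ and $y$; for $(xy)^2$ one computes $x^{-1}(xy)^2x = y(xyxy)x\cdot\text{(rearranged)}$ and reduces using the relations to an element of $A$, and similarly for conjugation by $y$. Since $x,y$ generate $P$ and the three generators of $A$ together with their inverses are permuted (up to products) under conjugation by $x$ and $y$, $A$ is normal. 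The quotient $P/A$ is generated by the images $\bar x,\bar y$, each of order dividing $2$ (as $x^2,y^2\in A$), and $\bar x\bar y$ has order dividing $2$ as well since $(xy)^2\in A$; hence $P/A$ is a quotient of the Klein four-group, and one checks it is not smaller (e.g. by producing the explicit faithful action, see below), so $P/A\cong C_2\times C_2$.

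Next I would show $A$ is free abelian of rank $3$. The cleanest route is to use the standard faithful representation of $P$ as a Bieberbach group: $P$ embeds in $\mathcal{O}_3(\mathbb{R})\ltimes\mathbb{R}^3$ with $x\mapsto(\mathrm{diag}(1,-1,-1),\,(\tfrac12,0,0))$, $y\mapsto(\mathrm{diag}(-1,1,-1),\,(0,\tfrac12,0))$ (the classical Hantzsche--Wendt construction). Under this embedding $x^2,y^2,(xy)^2$ map to the three unit translations $(I,e_1),(I,e_2),(I,e_3)$, which visibly span a rank-$3$ free abelian group; commutativity is immediate and freeness follows since they are linearly independent translations. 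This same representation is faithful (it is the standard one), so it simultaneously confirms that $P/A$ acts faithfully on the translation lattice, i.e. $P/A\cong C_2\times C_2$ exactly, and that $A$ has finite index $4$.

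Finally, torsion-freeness and non-left-orderability. Torsion-freeness: any torsion element would map to a torsion element of $P/A\cong C_2\times C_2$, so its square lies in $A$; but $A\cong\Z^3$ is torsion-free, so the element has order $1$ or $2$. An order-$2$ element would be of the form $(M,v)$ with $(M,v)^2=(I,0)$, i.e. $M^2=I$ and $(M+I)v=0$ in the affine representation; running through the three non-identity cosets of $A$ (represented by $x,y,xy$) and the allowed translation parts in $\tfrac12\Z^3$, one checks $(M+I)v$ is never zero, so there is no such element. (Alternatively cite that $P$, being the structure group / Bieberbach group it is, is torsion-free, or that it is a Bieberbach group and Bieberbach groups are torsion-free by definition.) Non-left-orderability: a left-orderable group has no nontrivial finite quotient acting ``with torsion''; more concretely, $P$ is a three-dimensional Bieberbach group whose holonomy representation is the diagonal action of $C_2\times C_2$ on $\Z^3$ by sign changes, and this is exactly the Hantzsche--Wendt group, which is the classical example of a torsion-free group that is not left orderable (equivalently, not bi-orderable and in fact has no left order because its holonomy $C_2\times C_2$ acts with determinant $-1$ on each coordinate axis, obstructing any invariant order on the $\Z^3$-lattice compatible with the extension). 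I expect the main obstacle to be the non-left-orderability: unlike the other assertions it is not a routine presentation manipulation, so I would either invoke Promislow's original argument \cite{MR940281} directly or give the short holonomy-based obstruction, namely that a left order on $P$ would restrict to a $P$-invariant (hence $C_2\times C_2$-invariant) left order on $A\cong\Z^3$, which is impossible since each nontrivial holonomy element inverts some coordinate.
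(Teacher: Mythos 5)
The paper does not prove this lemma at all; it simply cites Passman \cite[Lemma 13.3.3]{MR798076}, so your attempt to give an actual argument is welcome in principle. However, it contains two genuine errors. First, the concrete representation you write down is wrong: with $x\mapsto(\mathrm{diag}(1,-1,-1),(\tfrac12,0,0))$ and $y\mapsto(\mathrm{diag}(-1,1,-1),(0,\tfrac12,0))$ one gets $xy\mapsto(\mathrm{diag}(-1,-1,1),(\tfrac12,-\tfrac12,0))$ and hence $(xy)^2\mapsto(I,0)$. So this homomorphism kills $(xy)^2$, is not faithful, and the image of $A$ is a rank-$2$ lattice; the rank-$3$ claim, the exactness of $P/A\cong C_2\times C_2$, and your torsion-freeness check (which runs through cosets in the image) all collapse as written. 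The correct translation parts are $(\tfrac12,\tfrac12,0)$ and $(0,\tfrac12,\tfrac12)$ --- precisely the matrices the paper itself records in the remark following Proposition~\ref{prop:existence_promislow} --- for which $(xy)^2\mapsto(I,(0,0,-1))$ and the rest of your plan (surjection $\Z^3\twoheadrightarrow A\twoheadrightarrow\Z^3$ forcing $A\cong\Z^3$, five-lemma faithfulness, coset-by-coset exclusion of $2$-torsion) does go through.

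Second, and more seriously, your ``short holonomy-based obstruction'' to left-orderability is fallacious. A left order is invariant only under left translation; its restriction to a normal abelian subgroup $A$ is in general \emph{not} invariant under conjugation by elements of $P$, so there is no induced $C_2\times C_2$-invariant order on $\Z^3$ to contradict. The Klein bottle group $\langle a,b\mid bab^{-1}=a^{-1}\rangle$ is the standard counterexample to your reasoning: its holonomy inverts the normal $\Z$, yet the group is poly-$\Z$ and left-orderable (as are all structure groups of multipermutation solutions in this paper, by Theorem~\ref{thm:equivalences}, despite nontrivial holonomy). Non-left-orderability of $P$ genuinely requires either Promislow's unique product computation (left-orderable implies unique product, and $P$ fails it via the $14$-element set \eqref{eq:Promislow}) or the direct positive-cone argument of \cite[Lemma 13.3.3]{MR798076}; your stated fallback of citing \cite{MR940281} is the only sound option among those you list, and is essentially what the paper does.
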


\begin{proof}
    See for example~\cite[Lemma 13.3.3]{MR798076}.
\end{proof}

Let $\Gamma\subseteq\GL(n,\Z) \ltimes \Z^n$ be a Bieberbach group defined by the following short exact sequence
\begin{equation}\label{eq:Bieberbach}
    0 \longrightarrow L \longrightarrow \Gamma \overset{\pi}{\longrightarrow} \Gamma/L \longrightarrow 0.
\end{equation}
Here $L \subseteq \Z^n$ is taken such that $\{ I \} \times L$ is the maximal normal abelian subgroup of $\Gamma$, where $I$ denotes the identity matrix in $\GL(n,\Z)$ and $\pi$
is the canonical map, i.e. $\pi(A, a)=A$.

We say that elements $x,y$ of a group $G$ satisfy~\eqref{eq:Promislow_relation} if and only if
\begin{equation}
\label{eq:Promislow_relation}
    x^2y=yx^{-2} \quad \text{and} \quad y^2x=xy^{-2} \tag{P}
\end{equation}
holds in $G$.

\begin{lem}
    \label{lem:Promislow_relation_mat}
    Let $\alpha=(A,a)$ and $\beta=(B,b)$ be elements of $\Gamma$ that generate a subgroup isomorphic to $P$. 
    Then the following statements hold:
    \begin{enumerate}
        \item $A\neq I$ and $B\neq I$.
        \item $A$ and $B$ satisfy~\eqref{eq:Promislow_relation}. 
    \end{enumerate}
\end{lem}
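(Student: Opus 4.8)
The plan is to exploit the fact that $\Gamma$ sits inside $\GL(n,\Z)\ltimes\Z^n$ and that the linear part map $\pi\colon(A,a)\mapsto A$ is a group homomorphism whose kernel is the free abelian group $\{I\}\times L$. Since $\pi$ is a homomorphism, whatever relations hold between $\alpha$ and $\beta$ descend to relations between $A=\pi(\alpha)$ and $B=\pi(\beta)$ in the linear quotient $\Gamma/L$. In particular, from the defining relations of $P$ — or more conveniently from the equivalent relations~\eqref{eq:Promislow_relation}, which are easily seen to hold in $P$ for the standard generators $x,y$ — we immediately get that $A$ and $B$ satisfy~\eqref{eq:Promislow_relation}. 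This settles item (2); the only point to check is that the images of the Promislow generators under the isomorphism $P\to\langle\alpha,\beta\rangle$ are indeed $\alpha$ and $\beta$, so that the relations transported are exactly the ones in~\eqref{eq:Promislow_relation}. Here I would simply note that~\eqref{eq:Promislow_relation} and the presentation $\langle x,y\mid x^{-1}y^2x=y^{-2},\,y^{-1}x^2y=x^{-2}\rangle$ are literally the same two relations rewritten, so $\langle\alpha,\beta\rangle\cong P$ via $x\mapsto\alpha$, $y\mapsto\beta$ forces~\eqref{eq:Promislow_relation} on $(\alpha,\beta)$ and hence, applying $\pi$, on $(A,B)$.

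For item (1), the key observation is that $\ker\pi = \{I\}\times L$ is abelian, while $P$ is not abelian (for instance $x$ and $y$ do not commute in $P$). If $A = I$, then $\alpha\in\{I\}\times L$, so $\alpha$ is a translation; I would then argue that the centralizer of a translation in a Bieberbach group forces too much commutativity. More directly: since $\{I\}\times L$ is the maximal normal abelian subgroup and $C_\Gamma(L)=L$ (the defining property of a Bieberbach group, stated in Section~\ref{Bieberbach}), every element of $\{I\}\times L$ is central in any abelian subgroup of $\Gamma$ containing it, but that is not quite the contradiction I want. The cleanest route is: if $A=I$ then $\pi(\langle\alpha,\beta\rangle) = \langle B\rangle$ is cyclic, hence abelian; but then $\langle\alpha,\beta\rangle$ is a (central-by-cyclic, in fact abelian-by-cyclic) group, so its commutator subgroup lies in the abelian group $\{I\}\times L$ and $\langle\alpha,\beta\rangle$ is metabelian — actually it is abelian-by-cyclic, hence in particular $\langle\alpha,\beta\rangle'$ is abelian, so $\langle\alpha,\beta\rangle$ is metabelian. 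That alone does not contradict $P$ being metabelian. I therefore refine: if $A=I$, then $\alpha$ is a translation commuting with $\beta^2$ is not automatic, so instead I use that in $P$ the subgroup $A=\langle x^2,y^2,(xy)^2\rangle$ is exactly the translation subgroup and $x\notin A$; transporting, $\alpha\notin\{I\}\times L$ since $\alpha$ maps to a generator $x$ of $P$ outside the translation lattice of $P$, which corresponds to $L$. So $A\neq I$, and symmetrically $B\neq I$.

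The main obstacle I anticipate is pinning down precisely why the translation subgroup of the \emph{abstract} group $\langle\alpha,\beta\rangle\cong P$ coincides with $\langle\alpha,\beta\rangle\cap(\{I\}\times L)$. This requires knowing that the maximal normal abelian subgroup of a Bieberbach group is characteristic (it is the Fitting subgroup, or equivalently the unique maximal normal abelian subgroup), so its intersection with any subgroup behaves well, and that under the isomorphism $P\cong\langle\alpha,\beta\rangle$ this intersection must contain the corresponding canonical subgroup $A=\langle x^2,y^2,(xy)^2\rangle$ of $P$ — because $A$ is normal in $P$, free abelian of finite index, hence maps into a normal finite-index free abelian, i.e. into the translation lattice. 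Once that is in place, $x\notin A$ in $P$ gives $\alpha\notin\{I\}\times L$, i.e. $A\neq I$, and likewise $B\neq I$. I would present item (2) first (it is immediate from functoriality of $\pi$) and then item (1), and I expect item (1) to occupy essentially all the work of the proof.
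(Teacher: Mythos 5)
Your proof of item (2) coincides with the paper's: apply $\pi$ to the relations \eqref{eq:Promislow_relation}, which hold for $\alpha,\beta$ because (under the intended reading of the hypothesis) the isomorphism $P\to\langle\alpha,\beta\rangle$ sends $x\mapsto\alpha$, $y\mapsto\beta$, and \eqref{eq:Promislow_relation} is just the presentation of $P$ rewritten. For item (1), though, the paper's argument is far more elementary than what you propose: assuming $A=I$, so that $\alpha=(I,a)$ is a pure translation, choose $k$ with $\beta^{2k}=(I,b')\in\{I\}\times L$ (possible since $\pi(\beta)$ has finite order in $\Gamma/L$); the relation $\alpha^{-1}\beta^{2}\alpha=\beta^{-2}$ then gives $\beta^{-2k}=\alpha^{-1}\beta^{2k}\alpha=(I,-a)(I,b')(I,a)=(I,b')=\beta^{2k}$, hence $\beta^{4k}=1$, contradicting torsion-freeness of $\Gamma$. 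No structure theory of $P$ is needed.

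Your route for item (1) can be completed, but as written it has a genuine gap: the inclusion you justify is the wrong one. Write $H=\langle\alpha,\beta\rangle$, let $\phi\colon P\to H$ be the isomorphism, and let $A_P=\langle x^2,y^2,(xy)^2\rangle$. To deduce $\alpha\notin\{I\}\times L$ from $x\notin A_P$ you need $H\cap(\{I\}\times L)\subseteq\phi(A_P)$, not the containment $\phi(A_P)\subseteq H\cap(\{I\}\times L)$ that you argue for --- and your argument for the latter (``a normal finite-index free abelian subgroup must land in the translation lattice'') is not a proof in any case, since $\{I\}\times L$ is maximal only among normal abelian subgroups of $\Gamma$, not of $H$. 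The inclusion you actually need does hold: $H\cap(\{I\}\times L)$ is a normal abelian subgroup of $H$, and $A_P$ is the \emph{unique maximal} normal abelian subgroup of $P$ because $C_P(A_P)=A_P$ (the Klein group acts faithfully on $A_P\simeq\Z^3$); hence $\phi^{-1}\bigl(H\cap(\{I\}\times L)\bigr)\subseteq A_P$, and $\alpha\in\{I\}\times L$ would force $x\in A_P$, impossible since the images of $x,y$ must generate the non-cyclic group $P/A_P\simeq C_2\times C_2$. But this maximality of $A_P$ is exactly the fact you would have to supply --- it is not contained in the lemma on $P$ quoted in Section~\ref{Promislow} --- and it is precisely what the paper's two-line matrix computation avoids.
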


\begin{proof}
    We have the following short exact sequence:
    \[
        0 \longrightarrow \langle \alpha^2,\beta^2,(\alpha\beta)^2 \rangle \longrightarrow P \longrightarrow C_2^2 \longrightarrow 0.
    \]

    To prove that $A\neq I$ and $B\neq I$, let us assume that $A=I$. Let $k\in\mathbb{N}$ be such that $\beta^{2k}=(I,b')$; this is possible because $\pi(\beta)=B$ lies on a finite group. Then 
    \[
    \beta^{-2k}=\alpha^{-1}\beta^{2k}\alpha=(I,-a)(I,b')(I,a)=(I,b')=\beta^{2k},
    \]
    a contradiction since $\Gamma$ is torsion free.
    
    To prove that $A$ and $B$ satisfy~\eqref{eq:Promislow_relation} 
    just notice that $\pi(\alpha)=A$ and $\pi(\beta)=B$ and that $\alpha,\beta$ satisfy~\eqref{eq:Promislow_relation}.
\end{proof}

We will make use of two Laurent polynomials
\begin{align*}
    P_1(X,Y)=1+X+YX^{-1}+YX^{-2},&&
    P_2(X,Y)=-1+X^2.
\end{align*}

\begin{lem}
    \label{lem:integral_sol}
    Let $G$ be a group and $A,B\in G$ be two elements that satisfy~\eqref{eq:Promislow_relation}. Let $(A,v)$ and $(B,w)$ be any pair of elements of $\Gamma$ that projects to $A$ and $B$ respectively. If 
    \[
    \left[ {\begin{array}{cc}
         P_1(A,B) & P_2(A,B)  \\
         P_2(B,A) & P_1(B,A)
    \end{array}} \right]
    \left[ {\begin{array}{c}
         x  \\
         y
    \end{array}} \right]=
    -\left[ {\begin{array}{cc}
         P_1(A,B) & P_2(A,B)  \\
         P_2(B,A) & P_1(B,A)
    \end{array}} \right]
     \left[ {\begin{array}{c}
         v  \\
         w
    \end{array}} \right]
    \]
    has an integral solution $x,y\in L$, then
        \[
            \alpha=(A,x+v) \quad \text{and} \quad \beta=(B,y+w)
        \]
    satisfy~\eqref{eq:Promislow_relation}.
    
    \begin{proof}
        We prove that $\alpha^2\beta=\beta\alpha^{-2}$. 
        By 
        assumption, $A^2B=BA^{-2}$. Then, 
        using the identification of $\alpha$ and $\beta$ as matrices,
        we see that $\alpha^2\beta=\beta\alpha^{-2}$ is equivalent to 
        $P_1(A,B)(x+v)+P_2(A,B)(y+w)=0$,
        which is true by hypothesis. Similarly one proves that $\beta^2\alpha=\alpha\beta^{-2}$. 
    \end{proof}
\end{lem}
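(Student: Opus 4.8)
The plan is to verify both relations of~\eqref{eq:Promislow_relation} for $\alpha$ and $\beta$ by a direct computation inside $\GL(n,\Z)\ltimes\Z^n$, comparing matrix parts and translation parts separately. Recall that the group law is $(M,m)(N,n)=(MN,m+Mn)$ and that $(M,m)^{-1}=(M^{-1},-M^{-1}m)$. Writing $a=x+v$ and $b=y+w$, I first note that $\alpha=(A,a)$ and $\beta=(B,b)$ really do lie in $\Gamma$: they differ from $(A,v)$ and $(B,w)$ only by the translations $(I,x)$ and $(I,y)$, which belong to $\{I\}\times L\subseteq\Gamma$ since $x,y\in L$. I would then compute $\alpha^2=(A^2,(I+A)a)$, hence $\alpha^2\beta=(A^2B,(I+A)a+A^2b)$, and on the other hand $\alpha^{-2}=(A^{-2},-(A^{-1}+A^{-2})a)$, hence $\beta\alpha^{-2}=(BA^{-2},\,b-B(A^{-1}+A^{-2})a)$.

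Since $A$ and $B$ satisfy~\eqref{eq:Promislow_relation} we have $A^2B=BA^{-2}$, so the matrix parts of $\alpha^2\beta$ and $\beta\alpha^{-2}$ already agree; therefore $\alpha^2\beta=\beta\alpha^{-2}$ is equivalent to the equality of translation parts
\[
(I+A)a+A^2b=b-B(A^{-1}+A^{-2})a,
\]
which rearranges to $(I+A+BA^{-1}+BA^{-2})a+(A^2-I)b=0$, that is, $P_1(A,B)\,a+P_2(A,B)\,b=0$. Substituting $a=x+v$ and $b=y+w$, this is exactly the first coordinate of the displayed linear system, so it holds by hypothesis. For the second relation I would run the same computation with the roles of $\alpha,\beta$ (equivalently $A\leftrightarrow B$, $a\leftrightarrow b$) interchanged: the matrix parts match because $B^2A=AB^{-2}$, and the translation parts match if and only if $P_1(B,A)\,b+P_2(B,A)\,a=0$, which is precisely the second coordinate of the same system. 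Hence $\beta^2\alpha=\alpha\beta^{-2}$ as well, and~\eqref{eq:Promislow_relation} holds for $\alpha,\beta$.

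I do not expect a real obstacle here; the argument is pure bookkeeping. The only points needing care are keeping the semidirect-product arithmetic straight (which matrix acts on which vector when forming $\alpha^2$, $\alpha^{-2}$, and the various products), reading off the two Laurent polynomials in the correct order --- in the first relation $P_1$ is the coefficient of $a$ and $P_2$ that of $b$, while in the second relation this pattern is transposed --- and recalling that $A,B\in\GL(n,\Z)$ are invertible, so the negative powers appearing in $P_1$ are legitimate and the whole computation stays over $\Z$, which is exactly why demanding an \emph{integral} solution $x,y\in L$ is the right condition.
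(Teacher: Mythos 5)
Your proof is correct and follows the same route as the paper's: both reduce $\alpha^2\beta=\beta\alpha^{-2}$ to the identity $P_1(A,B)(x+v)+P_2(A,B)(y+w)=0$ (and its transposed counterpart for the other relation) after observing that the matrix parts already agree by~\eqref{eq:Promislow_relation}. You merely carry out explicitly the semidirect-product bookkeeping that the paper leaves implicit, and the computation checks out.
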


\begin{pro}
    \label{prop:existence_promislow}
    Let $\Gamma$ be a group defined by a short exact sequence as \eqref{eq:Bieberbach}. Let $\alpha,\beta\in\Gamma$ be such that $\pi(\alpha)\neq I$, $\pi(\beta)\neq I$. If $\alpha$ and $\beta$ satisfy~\eqref{eq:Promislow_relation}, then they generate a subgroup of $\Gamma$ isomorphic $P$.
\end{pro}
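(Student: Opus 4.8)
The plan is to show that the assignment $x\mapsto\alpha$, $y\mapsto\beta$ extends to an isomorphism onto $\langle\alpha,\beta\rangle$, i.e. that $\langle\alpha,\beta\rangle$ is a quotient of $P$ and that this quotient is in fact faithful. First I would check that $\alpha$ and $\beta$ satisfy the defining relations of $P$. The presentation of $P$ is $\langle x,y\mid x^{-1}y^2x=y^{-2},\ y^{-1}x^2y=x^{-2}\rangle$, and a direct rewriting shows these are equivalent to~\eqref{eq:Promislow_relation}; since $\alpha,\beta$ satisfy~\eqref{eq:Promislow_relation} by hypothesis, there is a surjective group homomorphism $\phi\colon P\twoheadrightarrow\langle\alpha,\beta\rangle$ sending $x\mapsto\alpha$, $y\mapsto\beta$. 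It remains to prove $\phi$ is injective.

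For injectivity I would exploit the structure of $P$ recalled in the first lemma of this section: $P$ sits in the short exact sequence $0\to N\to P\to C_2^2\to0$ with $N=\langle x^2,y^2,(xy)^2\rangle$ free abelian of rank $3$, and $P$ is torsion free. Correspondingly $\langle\alpha,\beta\rangle$ maps onto a subgroup of $\Gamma/L$ via $\pi$; composing with the quotient $\Gamma/L\to C_2^2$ obtained from the images of $\pi(\alpha),\pi(\beta)$ (which have order $2$ by~\eqref{eq:Promislow_relation}, since $A^2$ commutes with $B$ forces, together with $B^2$ commuting with $A$ and torsion-freeness of the ambient finite holonomy picture, that $A^2=B^2=I$ — here one uses Lemma~\ref{lem:Promislow_relation_mat}(1) together with the relation $A^2B=BA^{-2}$, which gives $A^2$ central enough to conclude), one gets a commuting square
\[
\begin{tikzcd}
P \arrow[r,twoheadrightarrow,"\phi"] \arrow[d,twoheadrightarrow] & \langle\alpha,\beta\rangle \arrow[d] \\
C_2^2 \arrow[r,equal] & C_2^2.
\end{tikzcd}
\]
Thus $\phi$ restricts to a surjection $N=\ker(P\to C_2^2)\twoheadrightarrow\ker(\langle\alpha,\beta\rangle\to C_2^2)=\langle\alpha,\beta\rangle\cap L$. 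Since $N\simeq\Z^3$ is free abelian and maps onto a subgroup of the free abelian group $L\subseteq\Z^n$, and $P$ is torsion free so $\ker\phi\cap N$ would be a normal subgroup of $P$ of infinite index unless trivial, it suffices to show $\phi|_N$ is injective.

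The key step, and the one I expect to be the main obstacle, is pinning down that $\phi|_N$ is injective rather than merely surjective. Here the natural argument is that $N$ is generated by $x^2,y^2,(xy)^2$, which map to $\alpha^2,\beta^2,(\alpha\beta)^2$; these lie in $L$, a free abelian group, so $\phi|_N$ factors as $\Z^3\to\Z^3\to L$, and injectivity fails only if the three vectors $\alpha^2,\beta^2,(\alpha\beta)^2$ satisfy a $\Z$-linear relation. A clean way to exclude this: the $C_2^2$-action of $P/N$ on $N$ is, in the basis $x^2,y^2,(xy)^2$, a specific integral representation with no nonzero fixed vectors on any proper summand (this is exactly the computation underlying the first lemma), and $\phi$ is $C_2^2$-equivariant for the conjugation action of $\pi(\alpha),\pi(\beta)$ on $L$; a nonzero element of $\ker\phi|_N$ would generate a $C_2^2$-invariant subgroup of $N$, and one checks directly from the two relations that the only such subgroups forcing $\phi$ to collapse are ruled out because $\alpha,\beta$ have infinite order in $\Gamma$ (torsion-freeness of the Bieberbach group $\Gamma$) and $\pi(\alpha)\ne I$, $\pi(\beta)\ne I$. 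Concretely one shows $\alpha^2\ne1$, $\beta^2\ne1$, $(\alpha\beta)^2\ne1$, and that no product $\alpha^{2a}\beta^{2b}(\alpha\beta)^{2c}$ with $(a,b,c)\ne0$ is trivial, by applying $\pi$ and using that a relation among the $\alpha^2,\beta^2,(\alpha\beta)^2$ in $L$ would, after conjugating by $\alpha$ and $\beta$ and using~\eqref{eq:Promislow_relation}, produce a nontrivial torsion element or a contradiction with $\pi(\alpha),\pi(\beta)\ne I$. Once $\phi|_N$ is injective, the five lemma applied to the two short exact sequences gives that $\phi$ is an isomorphism, so $\langle\alpha,\beta\rangle\simeq P$.
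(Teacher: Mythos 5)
Your main line coincides with the paper's: after observing that $\alpha,\beta$ satisfy the defining relations of $P$, so that $\langle\alpha,\beta\rangle$ is a quotient of $P$ via $\phi\colon x\mapsto\alpha$, $y\mapsto\beta$, everything reduces to showing that $\alpha^2,\beta^2,(\alpha\beta)^2$ generate a free abelian group of rank $3$, and the paper proves exactly this by conjugating a putative relation $a^{n_a}b^{n_b}c^{n_c}=1$ (with $a=\alpha^2$, $b=\beta^2$, $c=(\alpha\beta)^2$) first by $\alpha$, which yields $a^{n_a}b^{-n_b}c^{-n_c}=1$ and hence $b^{2n_b}c^{2n_c}=1$, and then by $\beta$, which yields $c^{4n_c}=1$, so that torsion-freeness of $\Gamma$ forces $n_c=n_b=n_a=0$. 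Your ``concretely one shows\dots'' sentence is precisely this computation, so the core of your argument is correct, but two points need repair. First, the detour through $\Gamma/L$ is both unnecessary and flawed: the relations~\eqref{eq:Promislow_relation} only say that conjugation by $B$ inverts $A^2$ (and symmetrically), not that $A^2=B^2=I$; in general $\pi(\alpha)$ need not have order $2$ in $\Gamma/L$ and $\phi(N)$ need not lie in $L$, so the identification $\ker\bigl(\langle\alpha,\beta\rangle\to C_2^2\bigr)=\langle\alpha,\beta\rangle\cap L$ is unjustified. None of this is needed: the reduction to $\phi|_N$ lives entirely on the source side, since $\ker\phi\cap N=1$ implies that $\ker\phi$ embeds in $P/N\simeq C_2^2$, hence is finite, hence trivial because $P$ is torsion-free. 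Second, the no-nontrivial-relation step is the actual content of the proof and should be executed rather than asserted; as written you only describe a strategy (``produce a nontrivial torsion element or a contradiction''), whereas the three-line conjugation computation above is what closes the argument.
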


\begin{proof}
    Let $P=\langle\alpha,\beta\rangle$, $L_P=\langle a,b,c\rangle$ where $a=\alpha^2, b=\beta^2, c=(\alpha\beta)^2$. Then $P$ is a Bieberbach group which fits into the short exact sequence
        \[
    0 \longrightarrow L_P \longrightarrow P \longrightarrow C_2^2 \longrightarrow 0.
    \]
    $L_P$ is an abelian subgroup of $\Gamma$, hence it is free abelian and it is maximal normal abelian subgroup of $P$. It is enough to show that $L_P$ is of rank 3. Let $n_a,n_b,n_c$ be integers such that $ a^{n_a}b^{n_b}c^{n_c}=1$. 
    Conjugation by $\alpha$ leaves 
    $a^{n_a}b^{-n_b}c^{-n_c}=1=a^{n_a}b^{n_b}c^{n_c}$ 
    and 
    hence
    $
    b^{2n_b}c^{2n_c}=1
    $.
    Now, conjugation by $\beta$ gives us $c^{4n_c}=1$. Since $\Gamma$ is a torsion free group, we conclude that  $n_a=n_b=n_c=0$.
\end{proof}
    
\begin{rem}
    Calculations of the previous proposition are easily checked using the representation from~\cite[Lemma 1]{MR3723179} that we state here for completeness:
\[ 
\left\langle 
\alpha=\left(\begin{smallmatrix}
     1 & 0 & 0 & 1/2  \\
     0 & -1 & 0 & 1/2 \\
     0 & 0 & -1 & 0 \\
     0 & 0 & 0 & 1
\end{smallmatrix}\right),
\beta=\left(\begin{smallmatrix}
     -1 & 0 & 0 & 0  \\
     0 & 1 & 0 & 1/2 \\
     0 & 0 & -1 & 1/2 \\
     0 & 0 & 0 & 1
\end{smallmatrix}\right)\right\rangle\subseteq\GL(4,\Q).
\]
\end{rem}

We now present an algorithm for finding subgroups (of a Bieberbach group) 
that are isomorphic to the Promislow group:

\begin{algorithm}
    \label{alg:Promislow}
    Let $\Gamma\subseteq\GL(n,\Z) \ltimes \Z^n$ be a Bieberbach group defined by the following short exact sequence
    \[
    0 \longrightarrow L \longrightarrow \Gamma \overset{\pi}{\longrightarrow} \Gamma/L \longrightarrow 0,
    \]
    where $L \subseteq \Z^n$ is taken such that $\{ I \} \times L$ is the maximal normal abelian subgroup of $\Gamma$ and $\pi$ is the canonical map.
    \begin{enumerate}
        \item Check all pairs $A,B\in G\setminus\{1\}$ that satisfy~\eqref{eq:Promislow_relation}.
        \item Determine preimages $(A,v)\in\pi^{-1}(A)$ and $(B,w)\in\pi^{-1}(B)$.
        \item Check if the linear system of Lemma~\ref{lem:integral_sol} has integer solutions. By  Proposition~\ref{prop:existence_promislow}, the existence of such solutions is equivalent to 
        the existence of a subgroup isomorphic to $P$.
    \end{enumerate}
\end{algorithm}

As an application, we obtain the following improvement of Proposition~\ref{pro:size7}:

\begin{thm}
\label{thm:P}
    Let $G(X,r)$ be the structure group of a not multipermutation involutive solution
    of size $\leq8$. Then $G(X,r)$ contains a subgroup isomorphic to the Promislow subgroup if and only if
    $(X,r)$ is not isomorphic to the solutions of Tables~\ref{tab:4-13} and~\ref{tab:4-19}. 
\end{thm}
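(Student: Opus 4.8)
The plan is to run Algorithm~\ref{alg:Promislow} on every structure group $G(X,r)$ coming from a not multipermutation involutive solution of size $\leq 8$, using the faithful representation $G(X,r)\hookrightarrow\GL(n+1,\Z)$ of Theorem~\ref{thm:ESS} together with the identification (established just after that theorem) of $\Soc(G(X,r))$ with the translation subgroup $L$ and of $\mathcal{G}(X,r)$ with the holonomy group $\Gamma/L$ from Theorem~\ref{thm:holonomy}. By the remark following Proposition~\ref{pro:4-19} and Proposition~\ref{pro:size7}, all not multipermutation solutions of size $\leq 7$ already have structure groups containing a Promislow subgroup (equivalently, without the unique product property), and Proposition~\ref{pro:GI} handles one size-$8$ example; so the substantive work is the size-$8$ case, where there are $2375$ not multipermutation solutions to process.

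The key steps, in order, are as follows. First I would enumerate the $2375$ solutions of size $8$ that are not multipermutation solutions from the classification of Etingof, Schedler and Soloviev~\cite{MR1722951}, discarding isomorphic copies. For each one, I build the matrices $\sigma_x$ and the structure group $G(X,r)\subseteq\GL(9,\Z)$, extract $L=\Soc(G(X,r))$ as the lattice of pure translations, and form the finite holonomy quotient $\Gamma/L\simeq\mathcal{G}(X,r)$. Then I apply the three steps of Algorithm~\ref{alg:Promislow}: iterate over pairs $A,B$ of nontrivial elements of the holonomy group satisfying relation~\eqref{eq:Promislow_relation}; for each such pair pick preimages $(A,v),(B,w)\in\Gamma$; and test whether the integral linear system of Lemma~\ref{lem:integral_sol} (with coefficient matrix built from the Laurent polynomials $P_1,P_2$ evaluated at $A,B$) has a solution $x,y\in L$. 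By Proposition~\ref{prop:existence_promislow} and Lemma~\ref{lem:Promislow_relation_mat}, a solution exists precisely when $G(X,r)$ contains a subgroup isomorphic to $P$. Recording the outcomes, I would verify that the search succeeds for every solution except exactly the eight collected in Tables~\ref{tab:4-13} and~\ref{tab:4-19}, and that for those eight the search over all admissible $(A,B)$ returns no integral solution, which by Lemma~\ref{lem:Promislow_relation_mat} shows conclusively that no pair of elements of $G(X,r)$ generates a Promislow subgroup. Finally, the forward implication — that the listed eight solutions genuinely fail to contain $P$ — follows because the algorithm is not merely a heuristic: Lemma~\ref{lem:Promislow_relation_mat} forces the holonomy images $A,B$ of any Promislow-generating pair to satisfy~\eqref{eq:Promislow_relation}, so exhausting all such $(A,B)$ and all preimages (the ambiguity in the preimage is absorbed into the solvability of the same affine system) is a complete check.

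The main obstacle is making the exhaustive search over size-$8$ solutions both correct and feasible, and in particular certifying the negative cases. On the feasibility side, the holonomy groups $\mathcal{G}(X,r)$ can be fairly large, so naively checking all pairs $(A,B)$ is the bottleneck; in practice one reduces the search by working up to the obvious symmetries of relation~\eqref{eq:Promislow_relation} (simultaneous conjugation in the holonomy group, and the swap $(A,B)\mapsto(B,A)$ together with $(A,B)\mapsto(A^{-1},B)$-type moves coming from the automorphisms of $P$), and by first screening pairs in the finite holonomy quotient before lifting. On the correctness side, the delicate point is the negative conclusion for the eight exceptional solutions: one must be sure the list of candidate $(A,B)$ pairs in the holonomy group is genuinely complete and that "the system of Lemma~\ref{lem:integral_sol} has no solution in $L$" is decided correctly — this is a finite computation over $\Z$ (Smith normal form of an integer matrix suffices), so it is rigorous, but it is the step where a bookkeeping error would be easiest to make. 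I would therefore cross-check the eight negative cases independently, e.g. by confirming along the way that these are exactly the solutions whose entries appear in Tables~\ref{tab:4-13} and~\ref{tab:4-19}, for which the text already records that the ad hoc technique of Section~\ref{UPP} "does not seem to work", consistent with the algorithmic verdict.
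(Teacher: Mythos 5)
Your proposal follows exactly the paper's argument: the published proof is precisely a case-by-case computation running Algorithm~\ref{alg:Promislow} (via the representation of Theorem~\ref{thm:ESS} and the holonomy identification of Theorem~\ref{thm:holonomy}) over the list of not multipermutation involutive solutions of size $\leq 8$ from~\cite{MR1722951}, with the completeness of the negative cases guaranteed by Lemma~\ref{lem:Promislow_relation_mat} and Proposition~\ref{prop:existence_promislow}, just as you describe. Your additional remarks on certifying the eight exceptional cases and on the preimage ambiguity are correct elaborations of the same approach rather than a different route.
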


\begin{proof}
    The proof is a case-by-case analysis 
    using Algorithm~\ref{alg:Promislow} and the list of involutive solutions of~\cite{MR1722951}.
\end{proof}    


\section{Right $p$-nilpotent skew left braces}
\label{right}

Let $A$ be a skew left brace. For subsets $X$ and $Y$ of $A$ we define inductively $R_0(X,Y)=X$ and $R_{n+1}(X,Y)$ as the additive subgroup generated by
$R_n(X,Y)*Y$ and $[R_n(X,Y),Y]_+$ for $n\geq0$.

\begin{lem}
    \label{lem:R:inclusion}
    Let $I$ be an ideal of a skew left brace $A$. Then
    $R_{n+1}(I,A)\subseteq R_n(I,A)$ for all $n\geq0$.
\end{lem}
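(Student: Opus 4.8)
The plan is to prove the inclusion by induction on $n$, after recasting the recursion defining the sets $R_n$ as the iteration of a single monotone operator. Fix the skew left brace $A$, and for a subset $Z\subseteq A$ let $\Phi(Z)$ denote the additive subgroup of $(A,+)$ generated by $Z*A$ and $[Z,A]_+$. Then the definition reads $R_{n+1}(I,A)=\Phi(R_n(I,A))$, i.e.\ $R_n(I,A)=\Phi^n(I)$, and the lemma asserts precisely that $\Phi^{n+1}(I)\subseteq\Phi^n(I)$ for all $n\ge 0$. The operator $\Phi$ is monotone: if $Z\subseteq Z'$, then each generator $z*a$ or $[z,a]_+$ of $\Phi(Z)$ (with $z\in Z$, $a\in A$) is also a generator of $\Phi(Z')$, so $\Phi(Z)\subseteq\Phi(Z')$. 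Granting monotonicity, the whole statement follows by a trivial induction from the single base inclusion $\Phi(I)\subseteq I$: indeed $\Phi^1(I)=\Phi(I)\subseteq I=\Phi^0(I)$, and if $\Phi^n(I)\subseteq\Phi^{n-1}(I)$ then $\Phi^{n+1}(I)=\Phi(\Phi^n(I))\subseteq\Phi(\Phi^{n-1}(I))=\Phi^n(I)$ by monotonicity.

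It remains to establish the base case $R_1(I,A)=\Phi(I)\subseteq I=R_0(I,A)$; this is the only point where we use that $I$ is an ideal and not merely a left ideal. It is enough to check that the two families of generators of $\Phi(I)$ lie in $I$. First, for $i\in I$ and $a\in A$ the additive commutator $[i,a]_+=i+(a-i-a)$ lies in $I$ because $I$ is a normal subgroup of $(A,+)$. Second, $i*a=\lambda_i(a)-a\in I$: since $I$ is normal in $(A,\circ)$ we have $j:=a'\circ i\circ a\in I$, so from $i+\lambda_i(a)=i\circ a=a\circ j=a+\lambda_a(j)$ we obtain $i*a=\lambda_i(a)-a=-i+(a+\lambda_a(j)-a)$, which lies in $I$ because $\lambda_a(j)\in I$ (as $I$ is stable under $\lambda$) and $I$ is normal in $(A,+)$. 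Alternatively one may simply invoke the standard fact that an ideal absorbs star products ($I*A\subseteq I$) and additive commutators ($[I,A]_+\subseteq I$) with all of $A$. In either case every generator of $\Phi(I)$ lies in $I$, hence $\Phi(I)\subseteq I$, and the induction of the first paragraph goes through.

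I do not expect any genuine obstacle here: all the content sits in the base case, which is the routine observation that an ideal is closed under additive commutators with $A$ and under star products with $A$, and the inductive step is purely formal. The only point worth some care is the reformulation in the first paragraph, which replaces an otherwise clumsy double induction (on $n$ and on the generators of $R_n$) by a one-line monotonicity argument.
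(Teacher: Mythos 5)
Your proof is correct and follows essentially the same route as the paper: induction on $n$, with the base case $R_1(I,A)\subseteq I$ coming from the fact that an ideal absorbs star products and additive commutators with $A$, and the inductive step being exactly the monotonicity of the operator $Z\mapsto\langle Z*A,[Z,A]_+\rangle_+$ (which the paper uses implicitly and you make explicit). The only difference is presentational: you spell out the base case, which the paper dismisses as trivial.
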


\begin{proof}
    We proceed by induction on $n$. The case $n=0$ is trivial as $I$ is an ideal of $A$. Let us assume that the claim holds for some $n\geq0$. Since by the inductive hypothesis
    $R_n(I,A)*A\subseteq R_{n-1}(I,A)*A\subseteq R_{n}(I,A)$ and 
    \[
    [R_n(I,A),A]_+\subseteq [R_{n-1}(I,A),A]_+ \subseteq R_n(I,A),
    \]
    it follows that $R_{n+1}(I,A)\subseteq R_n(I,A)$.
\end{proof}

\begin{pro}
    \label{lem:R:ideal}
    Let $I$ be an ideal of a skew left brace $A$. Then each $R_{n}(I,A)$ is an ideal of $A$. 
\end{pro}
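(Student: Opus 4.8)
The plan is to prove the statement by induction on $n$, following the same strategy used for Lemma~\ref{lem:R:inclusion}, but now checking \emph{all four} closure conditions that define an ideal: that $R_n(I,A)$ is a subgroup of $(A,+)$, that it is normal in $(A,+)$, that it is $\lambda$-stable (hence a subgroup of $(A,\circ)$), and that it is normal in $(A,\circ)$. The base case $n=0$ is immediate since $R_0(I,A)=I$ is an ideal of $A$ by hypothesis. For the inductive step, assume $R_n(I,A)$ is an ideal; write $J=R_n(I,A)$ for brevity. By Lemma~\ref{lem:R:inclusion} we already know $R_{n+1}(I,A)\subseteq J$, and $R_{n+1}(I,A)$ is by construction an additive subgroup generated by the elements $j*a$ and $[j,a]_+$ with $j\in J$, $a\in A$.

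The heart of the argument is to show $R_{n+1}(I,A)$ is stable under the three relevant operations of $A$: the $\lambda$-action, additive conjugation, and multiplicative conjugation. For each operation I would use the standard skew-brace identities (those displayed in Section~\ref{skew}, e.g. $a*(b+c)=a*b+b+a*c-b$ and $(a\circ b)*c = a*(b*c)+b*c+a*c$, together with the compatibility of $\lambda$ with $+$ and $\circ$) to rewrite the image of a generator $j*a$ or $[j,a]_+$ as an additive combination of: (i) elements of the form $j'*a'$ with $j'\in J$ (using that $J$ is $\lambda$-stable and normal, so $\lambda_c(j)\in J$, $c+j-c\in J$, $c\circ j\circ c'\in J$), and (ii) additive commutators $[j',a']_+$ with $j'\in J$, plus possibly (iii) elements of $J*A$ or $[J,A]_+$ coming from cross terms. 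Since $R_{n+1}(I,A)$ is the additive group generated by all of (i)–(iii), each such image lands back in $R_{n+1}(I,A)$. Concretely, for $\lambda$-stability one expands $\lambda_c(j*a)=\lambda_c(\lambda_j(a)-a)$ and uses $\lambda_c\lambda_j = \lambda_{c\circ j}$ together with $c\circ j = c + \lambda_c(j)$ to push everything into generators of the required shape; for $[c,j*a]_+$ one expands using bilinearity-type identities for $[\,\cdot\,,\,\cdot\,]_+$; and the multiplicative-conjugation case is handled via $\lambda$ since conjugation in $(A,\circ)$ can be expressed through the $\lambda$-map and the additive structure.

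I expect the main obstacle to be purely computational bookkeeping: the identities for $*$ and $[\,\cdot\,,\,\cdot\,]_+$ under the various operations produce several terms, and one must check that \emph{every} term is either manifestly a generator of $R_{n+1}(I,A)$ or lies in $J=R_n(I,A)$ — and in the latter case one needs $J\subseteq R_{n+1}(I,A)$, which is \emph{false} in general (the inclusion goes the other way by Lemma~\ref{lem:R:inclusion}). So the delicate point is to ensure no "stray" term in $J\setminus R_{n+1}(I,A)$ survives; this should work out precisely because the problematic terms always appear as telescoping additive commutators or $*$-products that are themselves generators of $R_{n+1}(I,A)$, not arbitrary elements of $J$. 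An alternative, cleaner route that I would actually prefer: show directly that $R_{n+1}(I,A)=R_n(I,A)*A + [R_n(I,A),A]_+$ is an ideal by invoking general closure lemmas — namely that if $J$ is an ideal then $J*A$ is an ideal and $[J,A]_+$ is an ideal, and that a sum of ideals is an ideal. These three facts are essentially standard in the skew-brace literature (and the sum-of-left-ideals fact is already used in the proof of Lemma~\ref{lem:Hall}); assembling them gives the result with minimal computation, reducing the whole proposition to the single inductive observation that $R_{n+1}(I,A)$ is built from the ideal $R_n(I,A)$ by these ideal-preserving operations.
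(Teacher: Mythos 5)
Your first route is, in outline, the approach the paper takes (induction, verification on generators using the two displayed $*$-identities), but as written it stops exactly where the proof actually happens. The two computations that dissolve your worry about ``stray terms'' are: for additive normality, $a+x*b-a=-x*a+x*(a+b)$, so the additive conjugate of a generator $x*b$ is a difference of two generators of $R_{n+1}(I,A)$ (nothing lands in $R_n(I,A)\setminus R_{n+1}(I,A)$); and for $\lambda$-stability, $\lambda_a(x*b)=(a\circ x\circ a')*\lambda_a(b)$, which is again a generator because $R_n(I,A)$ is normal in $(A,\circ)$ by induction. The additive commutator generators are handled by $a+[x,b]_+-a=[a+x-a,\,a+b-a]_+$ and $\lambda_a([x,b]_+)=[\lambda_a(x),\lambda_a(b)]_+$. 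More importantly, the paper does \emph{not} verify normality in $(A,\circ)$ by a direct conjugation computation as you propose; it deduces it from Lemma~\ref{lem:R:inclusion} via the chain $R_{n+1}(I,A)*A\subseteq R_n(I,A)*A\subseteq R_{n+1}(I,A)$ together with the standard criterion (\cite[Lemma 1.9]{csv}) that a left ideal which is normal in $(A,+)$ and closed under $*$ on the right by $A$ is an ideal. Without identifying that step, your sketch does not close.

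The ``cleaner alternative'' you say you prefer has a genuine gap: the claim that $[J,A]_+$ is an ideal of $A$ whenever $J$ is an ideal is not a standard fact and fails in general. Multiplicative conjugation of $x$ by $c$ equals $c+\bigl(\lambda_c(x)+\lambda_c(x*c')\bigr)-c$, so normality of a left ideal $L$ in $(A,\circ)$ comes down to $L*A\subseteq L$. For $L=[J,A]_+$ one only gets $([J,A]_+)*A\subseteq J*A$, and there is no reason for $J*A$ to be contained in $[J,A]_+$ (note that $\lambda_{j+a-j-a}$ is not a commutator of $\lambda$'s, since $\lambda$ is not additive). This is precisely why $R_{n+1}(I,A)$ is defined as the subgroup generated by \emph{both} $R_n(I,A)*A$ and $[R_n(I,A),A]_+$: the problematic $*$-products produced by the commutator part are absorbed by the other generating set, so the combined subgroup is closed under $*$ by $A$ even though the two pieces separately need not be ideals. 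Reducing the proposition to ``$J*A$ is an ideal, $[J,A]_+$ is an ideal, sums of ideals are ideals'' therefore does not work; only the first and third of these are safe.
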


\begin{proof}
    We proceed by induction on $n$. The case where $n=0$ follows from the fact that $I$ is an ideal of $A$. 
    So assume that the result holds for some $n\geq0$.
    We first prove that $R_{n+1}(I,A)$ is a normal subgroup of $(A,+)$. Let $a,b\in A$ and 
    $x\in R_n(I,A)$. Then
    \[
    a+x*b-a=-x*a+x*(a+b)\in R_{n+1}(I,A),
    \]
    by definition. 
    Since moreover
    \[
    a+(x+b-x-b)-a=(a+x-a)+(a+b-a)-(a+x-a)-(a+b-a) \in R_{n+1}(I,A)
    \]
    by the inductive hypothesis, it follows that $R_{n+1}(I,A)$ is a normal subgroup of $(A,+)$.
  
    We now prove that 
    \begin{equation}
        \label{eq:lambda}\lambda_a(R_{n+1}(I,A))\subseteq R_{n+1}(I,A)
    \end{equation} 
    for all $a\in A$. 
    Using 
    the inductive hypothesis and that 
    each $\lambda_a\in\Aut(A,+)$, 
    \begin{align*}
    &\lambda_a(x*b)
    =(a\circ x\circ a')*\lambda_a(b) \in R_{n+1}(I,B)
    \shortintertext{and}
    &\lambda_a([R_n(I,A),A]_+)\subseteq[\lambda_a(R_n(I,A)),\lambda_a(A)]_+\\
    &\phantom{\lambda_a([R_n(I,A),A]_+)}\subseteq [R_n(I,A),A]_+
    \subseteq R_{n+1}(I,A),
    \end{align*}
    equality \eqref{eq:lambda} follows. 
    
    Since $R_{n+1}(I,A)\subseteq R_n(I,A)$ by Lemma \ref{lem:R:inclusion},
    \[
    R_{n+1}(I,A)*A\subseteq R_n(I,A)*A\subseteq R_{n+1}(I,A).
    \] 
    Hence the claim follows from~\cite[Lemma 1.9]{csv}.
\end{proof}

\begin{lem}\label{lem:mbbr}
	Let $A$ be a skew left brace, $X$ be a subset of $A$ and $n,m\in\N$.
	Then $R_m(X,A)\subseteq \Soc_n(A)$ if and only if $X\subseteq\Soc_{m+n}(A)$.
\end{lem}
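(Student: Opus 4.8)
The plan is to prove both implications by induction on $m$, using Lemma~\ref{lem:soc_n} as the base case engine. The key observation is that the recursive definition $R_{m+1}(X,A)=\langle R_m(X,A)*A,\ [R_m(X,A),A]_+\rangle_+$ (additive subgroup) mirrors exactly the characterization in Lemma~\ref{lem:soc_n}: namely $Y\subseteq\Soc_{k+1}(A)$ iff $y*a\in\Soc_k(A)$ and $[y,a]_+\in\Soc_k(A)$ for all $y\in Y$, $a\in A$. Since $\Soc_k(A)$ is a subgroup of $(A,+)$, the condition ``$R_1(Y,A)\subseteq\Soc_k(A)$'' is equivalent to ``$Y\subseteq\Soc_{k+1}(A)$'' — this is precisely the $m=1$ case of the lemma (for arbitrary $n$, after writing $R_1(Y,A)$ for $Y=X$).

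First I would establish the case $m=0$: here $R_0(X,A)=X$, so the statement $X\subseteq\Soc_n(A)\iff X\subseteq\Soc_{0+n}(A)$ is a tautology. Next, for the case $m=1$: $R_1(X,A)\subseteq\Soc_n(A)$ means the additive subgroup generated by all $x*a$ and all $[x,a]_+$ (with $x\in X$, $a\in A$) lies in $\Soc_n(A)$; since $\Soc_n(A)$ is an additive subgroup, this holds iff each $x*a\in\Soc_n(A)$ and each $[x,a]_+\in\Soc_n(A)$, which by Lemma~\ref{lem:soc_n} is exactly $X\subseteq\Soc_{n+1}(A)$. Then for the inductive step, suppose the equivalence holds for $m$ and all $n$. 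For $m+1$: observe that $R_{m+1}(X,A)=R_m(R_1(X,A),A)$ — this identity follows directly from the recursive definition, since both sides are built by the same $m$-fold iteration of the operation $Y\mapsto\langle Y*A,[Y,A]_+\rangle_+$ applied starting from $R_1(X,A)$. Applying the inductive hypothesis with the subset $R_1(X,A)$ in place of $X$ gives: $R_{m+1}(X,A)=R_m(R_1(X,A),A)\subseteq\Soc_n(A)$ iff $R_1(X,A)\subseteq\Soc_{m+n}(A)$, and by the already-established $m=1$ case (with $m+n$ in place of $n$) this holds iff $X\subseteq\Soc_{m+n+1}(A)=\Soc_{(m+1)+n}(A)$, completing the induction.

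The main obstacle I anticipate is verifying the composition identity $R_{m+1}(X,A)=R_m(R_1(X,A),A)$ cleanly. One must be careful that the operation $Y\mapsto\langle Y*A,[Y,A]_+\rangle_+$ is well-behaved: in particular that applying it to $R_1(X,A)$ and iterating $m$ times genuinely recovers $R_{m+1}(X,A)$, which requires checking that $R_1$ of an additive subgroup behaves compatibly (one may need $R_1(Y_1+Y_2,A)$ controlled in terms of $R_1(Y_i,A)$, or simply argue by unwinding the recursion symbolically). A safe alternative, avoiding this identity, is to prove both implications of the lemma separately by induction on $m$, at each step peeling off one application of $R_1$ from the outside (using $R_{m+1}(X,A)=R_1(R_m(X,A),A)$, which is the literal definition) and combining the $m=1$ case with the inductive hypothesis — for the forward direction, $R_{m+1}(X,A)\subseteq\Soc_n(A)$ gives $R_m(X,A)\subseteq\Soc_{n+1}(A)$ by the $m=1$ case, then $X\subseteq\Soc_{m+n+1}(A)$ by induction; the reverse direction runs the same chain backwards. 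This second route uses only the definition $R_{m+1}=R_1\circ R_m$ and is probably the shortest path, so I would present that.
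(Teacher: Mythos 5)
Your proposal is correct, and the route you say you would present (peeling off one application via $R_{m+1}(X,A)=R_1(R_m(X,A),A)$, reducing to Lemma~\ref{lem:soc_n} and the fact that $\Soc_n(A)$ is an additive subgroup, then invoking the inductive hypothesis with $n+1$ in place of $n$) is exactly the paper's proof. The alternative composition identity $R_{m+1}(X,A)=R_m(R_1(X,A),A)$ you worried about is not needed.
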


\begin{proof}
    We proceed by induction on $m$. The case where $m=0$ is trivial, so assume that the result is valid for some $m\geq0$. 
	Note that $R_{m+1}(X,A)\subseteq\Soc_{n}(A)$ is equivalent to  $R_m(X,A)*A\subseteq\Soc_{n}(A)$ and $[R_m(X,A),A]_+ \subseteq\Soc_{n}(A)$. By Lemma~\ref{lem:soc_n}, this is equivalent to $R_m(X,A)\subseteq\Soc_{n+1}(A)$, which is equivalent to $X\subseteq\Soc_{m+n+1}(A)$ by the inductive hypothesis.
\end{proof}

\begin{lem}
	A skew left brace $A$ of nilpotent type is right nilpotent if and only if $R_n(A,A)=0$ for some $n\in\N$.
%
\end{lem}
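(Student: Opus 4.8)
The plan is to connect the sequence $R_n(A,A)$ with the socle series $\Soc_n(A)$ via Lemma~\ref{lem:mbbr}, and then invoke Lemma~\ref{lem:soc_n_2}, which characterizes right nilpotency of skew left braces of nilpotent type in terms of the socle series. Concretely, I would argue as follows.

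First I would prove the forward implication. Suppose $A$ is right nilpotent of nilpotent type. By Lemma~\ref{lem:soc_n_2}, there is some $n\in\N$ with $A=\Soc_n(A)$. Apply Lemma~\ref{lem:mbbr} with $X=A$, $m=n$, and taking the target socle term to be $\Soc_0(A)=0$: since $A\subseteq\Soc_n(A)=\Soc_{n+0}(A)$, the lemma gives $R_n(A,A)\subseteq\Soc_0(A)=0$, so $R_n(A,A)=0$.

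Conversely, suppose $R_n(A,A)=0$ for some $n\in\N$. Then $R_n(A,A)\subseteq\Soc_0(A)=0$, so Lemma~\ref{lem:mbbr} (again with $X=A$, $m=n$, and target $\Soc_0(A)$) yields $A\subseteq\Soc_{n+0}(A)=\Soc_n(A)$, hence $A=\Soc_n(A)$. Since $A$ is of nilpotent type, Lemma~\ref{lem:soc_n_2} now tells us that $A$ is right nilpotent.

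The proof is essentially a bookkeeping argument once Lemma~\ref{lem:mbbr} and Lemma~\ref{lem:soc_n_2} are in hand, so there is no serious obstacle; the only point requiring a little care is to use Lemma~\ref{lem:mbbr} with the degenerate value $n=0$ on the socle side, i.e. to remember that $\Soc_0(A)=0$ by definition of the socle series, so that "$R_m(X,A)\subseteq\Soc_0(A)$" means exactly "$R_m(X,A)=0$". I should also make sure that the hypothesis "of nilpotent type" is only needed where Lemma~\ref{lem:soc_n_2} is applied (both directions), whereas Lemma~\ref{lem:mbbr} holds for arbitrary skew left braces, so the structure of the argument is symmetric in the two implications.
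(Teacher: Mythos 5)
Your proposal is correct and follows exactly the paper's own argument: Lemma~\ref{lem:mbbr} (applied with $X=A$ and socle index $0$) shows $R_n(A,A)=0$ if and only if $A=\Soc_n(A)$, and Lemma~\ref{lem:soc_n_2} converts this to right nilpotency. Your extra care about the degenerate case $\Soc_0(A)=0$ is a reasonable bit of bookkeeping the paper leaves implicit.
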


\begin{proof}
    By Lemma \ref{lem:mbbr}, $R_n(A,A)=0$ if and only if $A=\Soc_{n}(A)$. By Lemma \ref{lem:soc_n_2}, the latter is equivalent to $A$ being right nilpotent.
\end{proof}

Recall that a finite group $G$ is said to be \emph{$p$-nilpotent} if there exists 
a normal Hall $p'$-subgroup of $G$. One proves that this subgroup is characteristic in $G$. 
Following~\cite{mbbr} we define right $p$-nilpotent skew left braces
of nilpotent type:

\begin{defn}
    Let $p$ be a prime number. A finite skew left brace $A$ of nilpotent type is said to be \emph{right $p$-nilpotent} if there exists $n\geq1$ such that $R_n(A_p,A)=0$, where $A_p$ is the Sylow $p$-subgroup of $(A,+)$.
\end{defn}

\begin{pro}\label{prop:soc_n}
    Let $A$ be a finite skew left brace of nilpotent type and $p\in\pi(A)$. Then
    $A_p\subseteq\Soc_n(A)$ for some $n\geq1$ if and only if 
    $A$ is right $p$-nilpotent. 
\end{pro}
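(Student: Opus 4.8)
The statement to prove is Proposition~\ref{prop:soc_n}: for a finite skew left brace $A$ of nilpotent type and $p\in\pi(A)$, we have $A_p\subseteq\Soc_n(A)$ for some $n\geq1$ if and only if $A$ is right $p$-nilpotent. The natural bridge between the two conditions is Lemma~\ref{lem:mbbr}, which translates statements about iterated $R_m(X,A)$ into statements about the socle series: specifically $R_m(X,A)\subseteq\Soc_n(A)$ holds if and only if $X\subseteq\Soc_{m+n}(A)$. Applying this with $X=A_p$ and $n=0$ gives immediately that $R_m(A_p,A)=0$ for some $m$ (i.e.\ $A$ is right $p$-nilpotent) is equivalent to $A_p\subseteq\Soc_m(A)$ for some $m$. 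So at the level of bare logic the proposition is essentially a direct corollary of Lemma~\ref{lem:mbbr}, and the plan is to say exactly that.

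The one point that needs a little care is whether $R_m(A_p,A)$ is even a legitimate object for the recursion, i.e.\ whether $A_p$ behaves well enough to iterate the construction $R_{n+1}(X,Y) = \langle R_n(X,Y)*Y,\ [R_n(X,Y),Y]_+\rangle_+$ and to feed it into Lemma~\ref{lem:mbbr}. Here I would invoke Lemma~\ref{lem:Sylow:left_ideal}: since $A$ is of nilpotent type, $A_p$ is a left ideal of $A$. However, Lemma~\ref{lem:mbbr} as stated only requires $X$ to be an arbitrary subset of $A$, so strictly speaking no structural hypothesis on $A_p$ is needed to quote it; the left-ideal property of $A_p$ is more relevant for the downstream applications (e.g.\ deducing that $A_p$ is actually an ideal and that the quotients in question make sense) than for this particular equivalence. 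I would still mention it, because it clarifies why the definition of right $p$-nilpotency is sensible.

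Concretely, the proof I envisage is two or three lines: ``By Lemma~\ref{lem:mbbr} applied with $X = A_p$, $m = n$ and the socle-index $0$, we have $R_n(A_p,A) = 0$ if and only if $A_p\subseteq\Soc_n(A)$. Hence there exists $n\geq 1$ with $R_n(A_p,A)=0$ — that is, $A$ is right $p$-nilpotent — if and only if there exists $n\geq 1$ with $A_p\subseteq\Soc_n(A)$.'' One should double-check the edge case $n=0$: $R_0(A_p,A) = A_p$ and $\Soc_0(A)=0$, so $R_0(A_p,A)=0$ forces $A_p=0$, i.e.\ $p\notin\pi(A)$, consistent with $A_p\subseteq\Soc_0(A)$; since the proposition restricts to $p\in\pi(A)$ and asks for $n\geq 1$, this degenerate case causes no trouble, but it is worth a remark to confirm the indices line up.

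\textbf{Main obstacle.} Honestly there is no serious obstacle: the content has been front-loaded into Lemma~\ref{lem:mbbr} (whose inductive proof handles the real work of commuting the $R$-construction past the socle series) and Lemma~\ref{lem:Sylow:left_ideal}. The only thing to be vigilant about is getting the indexing in Lemma~\ref{lem:mbbr} exactly right when specializing — making sure one reads off ``$R_m(X,A)\subseteq\Soc_0(A)=0 \iff X\subseteq\Soc_m(A)$'' rather than an off-by-one variant — and being clear that ``right $p$-nilpotent'' is by definition the existence of some $n$ with $R_n(A_p,A)=0$, so that the equivalence is literally a restatement. If one wanted to be thorough one could also note that $A_p\subseteq\Soc_n(A)$ for some $n$ is equivalent to $A_p\subseteq\Soc_n(A)$ for \emph{all large} $n$ because the socle series is increasing, but that is not needed for the statement as phrased.
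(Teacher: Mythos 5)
Your proposal is correct and is essentially identical to the paper's own proof, which consists of the single observation that Lemma~\ref{lem:mbbr} with $X=A_p$ and socle index $0$ gives $R_n(A_p,A)=0$ if and only if $A_p\subseteq\Soc_n(A)$. The additional remarks about $A_p$ being a left ideal and the $n=0$ edge case are fine but not needed.
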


\begin{proof}
    By Lemma~\ref{lem:mbbr}, 
    $R_n(A_p,A)=0$ if and only if $A_p\subseteq\Soc_n(A)$. 
\end{proof}

\begin{pro}
    A finite skew left brace $A$ of nilpotent type is right nilpotent if and only if $A$ is right $p$-nilpotent for all $p\in\pi(A)$. 
\end{pro}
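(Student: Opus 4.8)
The plan is to prove both implications using the reduction of right nilpotency to the sequence $R_n(A,A)$ (which was just established) and the corresponding characterization of right $p$-nilpotency in Proposition~\ref{prop:soc_n}, namely that $A$ is right $p$-nilpotent if and only if $A_p\subseteq\Soc_n(A)$ for some $n$. The forward direction is nearly immediate: if $A$ is right nilpotent then $R_n(A,A)=0$ for some $n$, and since $R_n(A_p,A)\subseteq R_n(A,A)$ (because $A_p\subseteq A$ and the operators $R_n(-,A)$ are monotone in the first argument, which one reads off the inductive definition), we get $R_n(A_p,A)=0$ for every $p\in\pi(A)$, so $A$ is right $p$-nilpotent for all $p$.

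For the converse, suppose $A$ is right $p$-nilpotent for every $p\in\pi(A)$. By Proposition~\ref{prop:soc_n}, for each prime $p\in\pi(A)$ there is an integer $n_p\geq1$ with $A_p\subseteq\Soc_{n_p}(A)$. Since the socle series is increasing, choosing $N=\max_{p\in\pi(A)}n_p$ gives $A_p\subseteq\Soc_N(A)$ for all $p\in\pi(A)$ simultaneously. Now $\Soc_N(A)$ is an ideal, in particular a subgroup of $(A,+)$, and as $(A,+)$ is finite nilpotent it is the internal direct sum $\sum_{p\in\pi(A)}A_p$ of its Sylow subgroups; hence $(A,+)=\sum_p A_p\subseteq\Soc_N(A)$, i.e. $A=\Soc_N(A)$. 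By Lemma~\ref{lem:soc_n_2}, $A=\Soc_N(A)$ is equivalent to $A$ being right nilpotent, which completes the proof.

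The only point requiring a word of care — and the step I expect to be the main (minor) obstacle — is the passage from "$A_p\subseteq\Soc_N(A)$ for each $p$" to "$A=\Soc_N(A)$": it uses that $\Soc_N(A)$ is closed under addition and that the additive group, being finite nilpotent, is generated by its Sylow subgroups. Both facts are available (the first because each $\Soc_n(A)$ is an ideal hence an additive subgroup; the second is the standard structure theorem for finite nilpotent groups, and indeed underlies Lemma~\ref{lem:Hall}), so no genuine difficulty arises, but it is the place where the hypothesis "of nilpotent type" is actually used.
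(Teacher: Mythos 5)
Your proposal is correct and follows essentially the same route as the paper: the converse direction (take $N=\max n_p$, use that $\Soc_N(A)$ is an ideal and that $(A,+)=\oplus_p A_p$ by nilpotency of the additive group, then apply Lemma~\ref{lem:soc_n_2}) is the paper's argument verbatim, and your forward direction via monotonicity of $R_n(-,A)$ is a trivial variant of the paper's appeal to $A_p\subseteq A=\Soc_n(A)$ together with Proposition~\ref{prop:soc_n}.
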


\begin{proof}
    Assume first that $A$ is right nilpotent. 
    By Lemma~\ref{lem:soc_n_2}, there exists $n\in\N$ such that $A_p\subseteq A=\Soc_n(A)$ for all $p\in\pi(A)$. Hence the claim follows from
    Proposition~\ref{prop:soc_n}. 
    Assume now
    that $A$ is right $p$-nilpotent for all $p\in\pi(A)$. This means
    that for each $p\in\pi(A)$ there exists $n(p)\in\N$ such that
    $A_p\subseteq\Soc_{n(p)}(A)$. Let 
    $n=\max\{n(p):p\in\pi(A)\}$. Then $A_p\subseteq\Soc_n(A)$ for all $p\in\pi(A)$. Since $\Soc_n(A)$ is an ideal
    of $A$ and $A$ is of nilpotent type, $A=\oplus_{p\in\pi(A)}A_p\subseteq\Soc_n(A)$. Hence $A$ is right nilpotent by Lemma~\ref{lem:soc_n_2}.
\end{proof}


In~\cite{mbbr}, Meng, Ballester--Bolinches and Romero prove the following theorem 
for left braces: 

\begin{thm}
\label{thm:right_p}
    Let $A$ be a finite skew left brace of nilpotent type. If $(A,\circ)$ has an abelian normal
    Sylow $p$-subgroup for some $p\in\pi(A)$, then $A$ is right $p$-nilpotent.
\end{thm}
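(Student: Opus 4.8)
The plan is to reduce, via the socle tower, the statement that $A$ is right $p$-nilpotent to a statement about the quotient $\overline{A}=A/\Soc(A)$, and to combine this with an induction on $|A|$. By Proposition~\ref{prop:soc_n} it suffices to show $A_p\subseteq\Soc_n(A)$ for some $n$. First I would observe that $\Soc(A)=\ker\lambda\cap Z(A,+)$ is an ideal, so $\overline{A}$ is again a finite skew left brace of nilpotent type, and its order strictly divides $|A|$ unless $\Soc(A)=0$. The hypothesis passes to the quotient: the image of a normal abelian Sylow $p$-subgroup of $(A,\circ)$ is a normal Sylow $p$-subgroup of $(\overline{A},\circ)$, and it is abelian (a quotient of an abelian group). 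So by induction $\overline{A}$ is right $p$-nilpotent, i.e.\ $\overline{A}_p\subseteq\Soc_m(\overline{A})$ for some $m$; since $\Soc_m(\overline{A})=\Soc_{m+1}(A)/\Soc_1(A)$ by the recursive definition of the socle series, and $\overline{A}_p$ is the image of $A_p$ (as $\Soc(A)$ is a $p$-group or coprime-to-$p$ piece splits off — more precisely the image of $A_p$ under the canonical map is $\overline{A}_p$ since $(A,+)$ is nilpotent and Sylow subgroups map onto Sylow subgroups), we get $A_p\subseteq\Soc_{m+1}(A)$ once we handle the base.

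The base case, and the genuine content of the argument, is $\Soc(A)=0$: here I must show that the normality and abelianness hypothesis on the Sylow $p$-subgroup of $(A,\circ)$ forces $A_p=0$, equivalently $p\notin\pi(A)$. This is where I expect the main obstacle. The idea, following~\cite{mbbr}, is this: by Lemma~\ref{lem:Sylow:left_ideal} the additive Sylow $p$-subgroup $A_p$ is a left ideal, hence a subgroup of $(A,\circ)$; and a counting/Sylow argument together with the hypothesis that $(A,\circ)$ has a \emph{normal} Sylow $p$-subgroup $P_\circ$ should identify $A_p$ with $P_\circ$ as sets (both are the unique, hence coinciding, "$p$-part" once one checks $A_p$ is a $p$-subgroup of $(A,\circ)$ of full $p$-power order). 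Then $A_p$ is simultaneously a subgroup of $(A,+)$ and a normal subgroup of $(A,\circ)$, and the brace relations restricted to $A_p$ make it a \emph{left ideal with abelian multiplicative group} that is $\lambda$-invariant. One then computes that $\lambda$ restricted to $A_p$ lands in $\Aut(A,+)$ and, using abelianness of $(A_p,\circ)$, that $\lambda_a$ acts trivially on $A_p$ for $a\in A_p$ and that $A_p$ is additively central within itself; pushing this against the hypothesis $\Soc(A)=0$ via Lemma~\ref{lem:soc_n} (which characterizes $\Soc_1(A)$ as the set of $x$ with $x*a=0$ and $[x,a]_+=0$ for all $a$) should yield that a nonzero element of $A_p$ would lie in $\Soc(A)$, contradiction, unless $A_p=0$.

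Concretely, the sequence of steps I would carry out is: (i) record that $\Soc(A)$ is an ideal and set up the induction on $|A|$, treating $\Soc(A)\ne0$ by passing to $A/\Soc(A)$ and invoking the identity $\Soc_m(A/\Soc(A))=\Soc_{m+1}(A)/\Soc(A)$; (ii) in the base case $\Soc(A)=0$, use Lemma~\ref{lem:Sylow:left_ideal} to get that $A_p$ is a left ideal, hence a subgroup of $(A,\circ)$, and argue it equals the normal Sylow $p$-subgroup $P_\circ$ hypothesized; (iii) exploit normality of $P_\circ$ in $(A,\circ)$ and the brace axiom $a*(b+c)=a*b+b+a*c-b$ plus $(a\circ b)*c = a*(b*c)+b*c+a*c$ to show that for $x\in A_p$ and arbitrary $a\in A$ one controls $x*a$ and $[x,a]_+$ inside $A_p$, and that the induced action makes $A_p$ behave like a trivial brace on which $\lambda$ is trivial — here abelianness of $(A_p,\circ)=(P_\circ,\circ)$ is essential; (iv) conclude from Lemma~\ref{lem:soc_n} that $A_p\subseteq\Soc_1(A)=\Soc(A)=0$, hence $p\notin\pi(A)$ and the statement $R_n(A_p,A)=R_n(0,A)=0$ holds trivially; (v) assemble (i) and (iv) to finish. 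The delicate point throughout is step~(iii): making the interaction between the additive $p$-structure and the multiplicative normality precise enough to land in the socle, which is exactly the technical heart transported from the group-theoretic proof of $p$-nilpotency in~\cite{mbbr}; I would model the computation on their Lemma on left braces and verify each identity survives the passage to skew (nilpotent-type) braces using only the axioms recalled in Section~\ref{preliminaries}.
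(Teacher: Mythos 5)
Your overall architecture---induction on $|A|$ via a minimal counterexample, reduction to the case $\Soc(A)=0$, and the identification of the additive Sylow subgroup $A_p$ with the normal abelian Sylow $p$-subgroup of $(A,\circ)$ so that $A_p$ becomes an ideal---matches the paper's proof, and your steps (i) and (ii) are sound. (The paper quotients by $\Soc(A_p)$ rather than by $\Soc(A)$, but that difference is cosmetic.)

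The gap is in step (iii), which you yourself flag as the delicate point. You propose to compute, from abelianness of $(A_p,\circ)$, that ``$\lambda_a$ acts trivially on $A_p$ for $a\in A_p$'' and that $A_p$ is additively central in itself; that would make $A_p$ a trivial brace sitting inside $\Soc(A)$. This is false: the left brace $\Z/p^2$ with $a\circ b=a+b+pab$ (the radical ring $p\Z/p^3\Z$) has abelian multiplicative group, yet $\lambda_a(b)=b+pab\neq b$ in general and its socle is the proper nonzero ideal $p\Z/p^2$. What the argument actually needs is only that $\Soc(A_p)\neq 0$, and this is obtained in the paper by two inputs absent from your sketch. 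First, abelianness of $(A_p,\circ)$ yields the identity $\Soc(A_p)=\Fix(A_p)\cap Z(A_p,+)$, by rewriting ``$a*b=0$ for all $b\in A_p$'' as ``$b\circ a=b+a$ for all $b\in A_p$'' after commuting. Second---and this is the real engine---$A_p$ is a $p$-brace of nilpotent type, hence left nilpotent by~\cite[Proposition 4.4]{csv}, and a finite left nilpotent skew brace has $\Fix\cap Z(+)\neq0$ by the fixed-point result~\cite[Proposition 2.26]{csv}. No amount of identity-chasing with the brace axioms replaces that fixed-point theorem. Finally, a nonzero element of $\Soc(A_p)$ a priori only satisfies $a*b=0$ for $b\in A_p$; to contradict $\Soc(A)=0$ you must also show $\Soc(A_p)\subseteq\Soc(A)$, which requires the computation of Lemma~\ref{lem:Soc(A_p)} using the decomposition $c=x+y$ with $x\in A_p$, $y\in A_{p'}$---another step your proposal does not supply.
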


Our proof is very similar to that of~\cite{mbbr}. 
We shall need the following lemmas:

\begin{lem}
\label{lem:A_p:ideal}
    Let $A$ be a finite skew left brace of nilpotent type. If $(A,\circ)$ has a normal
    Sylow $p$-subgroup for some $p\in\pi(A)$, then $A_p$ is an ideal of $A$.
\end{lem}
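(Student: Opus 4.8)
The plan is to show that $A_p$ is a left ideal of $A$ that is simultaneously normal in $(A,+)$ and normal in $(A,\circ)$; then by definition $A_p$ is an ideal. First, since $A$ is of nilpotent type, Lemma~\ref{lem:Sylow:left_ideal} already gives that $A_p$ is a left ideal of $A$, hence in particular a subgroup of $(A,\circ)$, and since $(A,+)$ is nilpotent its Sylow $p$-subgroup $A_p$ is normal (indeed characteristic) in $(A,+)$. So the only thing that requires work is to prove that $A_p$ is normal in $(A,\circ)$. By hypothesis $(A,\circ)$ has a normal Sylow $p$-subgroup, call it $Q$; the natural guess is that $A_p = Q$, which would finish the argument immediately.

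The main step is therefore to verify $A_p = Q$. One clean way: note that $A_p$ and $Q$ are both $p$-subgroups of the multiplicative group $(A,\circ)$ — for $A_p$ this uses that left ideals are subgroups of $(A,\circ)$ and that $|A_p|$ is a power of $p$ — and both have order equal to the full $p$-part of $|A|$ (since $|A_p|$ is the $p$-part of $|A| = |(A,\circ)|$ and $Q$ is a Sylow $p$-subgroup of $(A,\circ)$). A subgroup of order equal to the Sylow order that is contained in some Sylow $p$-subgroup must equal a Sylow $p$-subgroup; and since $Q$ is the \emph{unique} Sylow $p$-subgroup of $(A,\circ)$ (it is normal), every $p$-subgroup of $(A,\circ)$ of that order equals $Q$. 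Hence $A_p = Q$ is normal in $(A,\circ)$.

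Combining: $A_p$ is a left ideal (Lemma~\ref{lem:Sylow:left_ideal}), normal in $(A,+)$ (nilpotency of the additive group), and normal in $(A,\circ)$ (the identification $A_p=Q$ just established), so $A_p$ is an ideal of $A$.

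I expect the only genuinely delicate point to be the order bookkeeping in the identification $A_p = Q$: one must be careful that $|A_p|$, computed as a subgroup of $(A,+)$, really is the full $p$-part of the common order $|A|=|(A,+)|=|(A,\circ)|$, and that a left ideal of $p$-power order sits inside $(A,\circ)$ as an honest $p$-subgroup. Everything else is either quoted (Lemma~\ref{lem:Sylow:left_ideal}) or standard finite group theory about Sylow subgroups.
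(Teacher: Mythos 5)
Your proposal is correct and follows the same route as the paper: invoke Lemma~\ref{lem:Sylow:left_ideal} to see that $A_p$ is a left ideal, observe that it is then a Sylow $p$-subgroup of $(A,\circ)$ of the right order, and conclude it coincides with the unique (normal) Sylow $p$-subgroup of $(A,\circ)$. The paper's proof is just a terser version of your order-counting identification, so there is nothing to add.
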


\begin{proof}
   Since the group $(A,+)$ is nilpotent, there exists a unique normal Sylow $p$-subgroup $A_p$ of $(A,+)$. 
   By Lemma~\ref{lem:Sylow:left_ideal}, $A_p$ is a left ideal of $A$. Then $A_p$ is a Sylow
   $p$-subgroup of $(A,\circ)$, normal by hypothesis and hence $A_p$ is an ideal of $A$. 
\end{proof}

\begin{lem}
    \label{lem:Soc(A_p)}
    Let $A$ be a finite skew left brace of nilpotent type.
    If $(A,\circ)$ has a normal Sylow $p$-subgroup for some $p\in\pi(A)$, 
    then $\Soc(A_p)=\Soc(A)\cap A_p$. In particular, $\Soc(A_p)$ is an ideal of $A$.
\end{lem}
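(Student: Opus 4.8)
The plan is to prove the two assertions of Lemma~\ref{lem:Soc(A_p)} in turn, using the hypothesis to reduce everything to computations inside the ideal $A_p$. First I would record what Lemma~\ref{lem:A_p:ideal} already gives us: under the hypothesis that $(A,\circ)$ has a normal Sylow $p$-subgroup, $A_p$ is an ideal of $A$, hence in particular $A_p$ is itself a skew left brace (a sub-skew-brace of $A$, with the induced $+$ and $\circ$). So $\Soc(A_p)=\ker(\lambda|_{A_p})\cap Z(A_p,+)$ makes sense, where $\lambda|_{A_p}$ denotes the restriction of the $\lambda$-map of the brace $A_p$, i.e. $\lambda_a$ for $a\in A_p$ acting on $A_p$.

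The inclusion $\Soc(A)\cap A_p\subseteq\Soc(A_p)$ is the easy direction and I would do it first. If $a\in\Soc(A)\cap A_p$, then $\lambda_a=\id$ on all of $A$, so a fortiori $\lambda_a=\id$ on $A_p$; and $a\in Z(A,+)$ forces $a\in Z(A_p,+)$ since $A_p\subseteq A$. Hence $a\in\ker(\lambda|_{A_p})\cap Z(A_p,+)=\Soc(A_p)$. For the reverse inclusion $\Soc(A_p)\subseteq\Soc(A)\cap A_p$, the point is that the action of $A$ on $A$ respects the Hall decomposition $(A,+)=\bigoplus_{q\in\pi(A)}A_q$: since each $A_q$ is a left ideal (Lemma~\ref{lem:Sylow:left_ideal}), $\lambda_a$ preserves each $A_q$ for every $a\in A$. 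So if $a\in A_p$ and $\lambda_a=\id$ on $A_p$, I still need to check $\lambda_a=\id$ on each $A_q$ with $q\neq p$. Here I would argue by orders: $\lambda_a$ restricted to $A_q$ is an automorphism of the $q$-group $A_q$, while $a$ has $p$-power order in $(A,\circ)$ (it lies in the normal Sylow $p$-subgroup $A_p$ of $(A,\circ)$), so $\lambda_a|_{A_q}$ has $p$-power order; but one also shows $\lambda_a|_{A_q}$ has $q$-power order — this follows because $a*A_q=\lambda_a(A_q)-A_q\subseteq A_q$ and more precisely from the fact that the image of $A_p$ under $\lambda$ inside $\Aut(A_q)$ lies in a Sylow-compatible subgroup; concretely, since $A_{p'}=\sum_{q\neq p}A_q$ is an ideal (Lemma~\ref{lem:Hall}) and $A/A_{p'}\cong A_p$ as braces, the $\lambda$-action of $A_p$ on $A_{p'}$ factors through a $p$-group acting on a $p'$-group, and a standard coprime-action argument then gives that any element of $A_p$ acting trivially on the relevant section acts trivially on $A_{p'}$. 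Thus $\lambda_a=\id$ on $A$, i.e. $a\in\ker\lambda$. Similarly $a\in Z(A_p,+)$ together with $Z(A_q,+)$-centrality being automatic for $a\in A_p$ acting on $A_q$ (again by coprimality of orders of the additive commutators $[a,A_q]_+\subseteq A_q$ forced to be trivial) yields $a\in Z(A,+)$. Hence $a\in\Soc(A)\cap A_p$.

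Finally, the ``in particular'' clause: $\Soc(A)$ is an ideal of $A$ and $A_p$ is an ideal of $A$ by Lemma~\ref{lem:A_p:ideal}, so their intersection $\Soc(A)\cap A_p$ is an ideal of $A$; by the equality just proved this intersection equals $\Soc(A_p)$, which is therefore an ideal of $A$.

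The main obstacle is the coprime-action step in the reverse inclusion: making precise why an element of $A_p$ that centralizes $A_p$ (in the brace sense: trivial $\lambda$ and trivial additive commutator on $A_p$) must already centralize every other Sylow summand $A_q$. The cleanest route is to pass to the quotient brace $A/A_{p'}\cong A_p$ and observe that $\Soc$ of a brace that is a $p$-group relates back to $A$ via the section $A_{p'}$; alternatively one invokes that the $\lambda$-action of the (solvable, indeed $p$-group) $(A_p,\circ)$ on the $p'$-group $(A_{p'},+)$ is a coprime action, so fixed points detect triviality. I expect this to be where the cited references (the analogous arguments in~\cite{mbbr} and~\cite{csv}) carry most of the weight, and in the write-up I would lean on them rather than reprove the coprime-action lemma from scratch.
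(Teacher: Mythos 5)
Your easy inclusion and the ``in particular'' clause are fine, but the reverse inclusion $\Soc(A_p)\subseteq\Soc(A)\cap A_p$ has a genuine gap: the coprime-action argument you sketch does not close. Knowing that $\lambda_a$ is trivial on $A_p$ gives, by itself, no control over $\lambda_a|_{A_q}$ for $q\neq p$; and the claim that $\lambda_a|_{A_q}$ has $q$-power order is false in general (an automorphism of a $q$-group can perfectly well have order $p$, e.g.\ when $p\mid q-1$), so the ``orders are coprime, hence trivial'' step fails. The patch you propose is also not available as stated: Lemma~\ref{lem:Hall} only says that $A_{p'}$ is a \emph{left} ideal, not an ideal, so $A/A_{p'}$ need not be a quotient brace, and in any case the hypothesis you have concerns the action of $a$ on $A_p$, not on any section of $A_{p'}$, so no coprime fixed-point lemma applies.

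The fact you need is that $A_p*A_{p'}=0$, and the correct reason is the one ingredient your argument never uses: $A_p$ is an \emph{ideal} (Lemma~\ref{lem:A_p:ideal}), hence $a*c\in A_p$ for every $a\in A_p$ and $c\in A$. Writing $c=x+y$ with $x\in A_p$, $y\in A_{p'}$ and using $a*x=0$ (since $a\in\Soc(A_p)$) together with $a*(x+y)=a*x+x+a*y-x$, one gets $a*c=x+a*y-x\in A_{p'}$, because $A_{p'}$ is a left ideal and is normal in the nilpotent group $(A,+)$. Hence $a*c\in A_p\cap A_{p'}=0$, so $\lambda_a=\id$ on all of $A$; and $a\in Z(A,+)$ follows directly from $a\in Z(A_p,+)$ and the additive decomposition $(A,+)=A_p\oplus A_{p'}$, with no order argument needed. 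This is exactly the route the paper takes, and it is both shorter and avoids the coprimality issues entirely.
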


\begin{proof}
    By Lemma~\ref{lem:A_p:ideal}, $A_p$ is an ideal of $A$. 
    Clearly $\Soc(A_p)\supseteq\Soc(A)\cap A_p$, so 
    we only need to prove that $\Soc(A_p)\subseteq\Soc(A)\cap A_p$. 
    If $a\in\Soc(A_p)$, then $a\in Z(A_p,+)$ and
    $a*b=0$ for all $b\in A_p$. Let $c\in A$ and write $c=x+y$, where $x\in A_p$ and 
    $y\in A_{p'}$. Since 
    \[
    a*c=a*(x+y)=a*x+x+a*y-x=x+a*y-x\in A_p\cap A_{p'}=0
    \]
    and $a\in Z(A,+)$, the lemma is proved.
\end{proof}

Now we prove Theorem~\ref{thm:right_p}.

\begin{proof}
    Let us assume that the result does not hold and let $A$ be a counterexample of minimal size. We may assume that $A$ is non-trivial, i.e. $\Soc(A)\ne A$.
    By Lemma~\ref{lem:A_p:ideal}, $A_p$ is an ideal of $A$.
    
    Since 
    $\lambda_a\in\Aut(A_p,+)$, $\lambda_a(Z(A_p,+))\subseteq Z(A_p,+)$ 
    and hence $Z(A_p,+)$ is a left ideal of $A_p$.
    
    By Lemma~\ref{lem:Soc(A_p)}, $\Soc(A_p)$ is an ideal of $A$. Furthermore, since $(A_p,\circ)$ is abelian, 
    \begin{align*}
    \Soc(A_p)&=\{a\in A_p:a*b=0\text{ for all $b\in A_p$}\}\cap Z(A_p,+)\\
    &=\{a\in A_p:a\circ b=a+b\text{ for all $b\in A_p$}\}\cap Z(A_p,+)\\
    &=\{a\in A_p:b\circ a=b+a\text{ for all $b\in A_p$}\}\cap Z(A_p,+)\\
    &=\Fix(A_p)\cap Z(A_p,+).
    \end{align*}
    Since $|A_p|=p^m$ for some $m\geq1$, the skew left brace $A_p$ is left nilpotent by~\cite[Proposition 4.4]{csv} and, moreover, $Z(A_p,+)$ is a non-zero 
    subgroup of $(A_p,+)$. Then $\Soc(A_p)=\Fix(A_p)\cap Z(A_p,+)\ne 0$ by~\cite[Proposition 2.26]{csv}. In particular, $0\ne \Soc(A_p)\subseteq \Soc(A)$. By Lemma \ref{lem:Soc(A_p)}, $I=\Soc(A_p)$ is a non-trivial ideal of $A$. Then $A/I$ is a skew left brace of nilpotent type such that $0<|A/I|<|A|$. The minimality of $|A|$ implies that $A/I$ is right $p$-nilpotent. Hence, $R_n(A_p/I,A/I)=0$ for some $n$. That is $R_n(A_p,A)\subseteq I\subseteq \Soc(A)$. Now, by Lemma \ref{lem:mbbr}, $R_{n+1}(A_p,A)=0$. Then $A$ is right $p$-nilpotent, a contradiction.
\end{proof}

Recall that a
group $G$ has the \emph{Sylow tower property} if 
there exists a normal series
$1=G_0\subseteq G_1\subseteq\cdots\subseteq G_n=G$ 
such that each quotient $G_i/G_{i-1}$ is isomorphic to a Sylow subgroup of $G$. 
We also recall that \emph{$A$--groups} are finite groups whose Sylow subgroups are abelian. 

\begin{cor}
\label{cor:STP+abelian}
	Let $A$ be a finite skew left brace of nilpotent type. Assume that 
	$(A,\circ)$ has the Sylow tower property and that 
	all Sylow subgroups of $(A,\circ)$ are abelian. 
	Then $A$ is right nilpotent.
\end{cor}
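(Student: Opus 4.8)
The plan is to argue by induction on the number of prime divisors of $|A|$, peeling off the smallest prime in the Sylow tower. First I would recall that by hypothesis $(A,\circ)$ has a normal Sylow $p$-subgroup, where $p$ is the prime at the top (or bottom) of the Sylow tower; since it is also abelian, Theorem~\ref{thm:right_p} applies directly and gives that $A$ is right $p$-nilpotent, i.e. $A_p\subseteq\Soc_n(A)$ for some $n$ by Proposition~\ref{prop:soc_n}. The point of the Sylow tower hypothesis is that it is inherited by quotients, so after quotienting by an appropriate ideal we can keep applying Theorem~\ref{thm:right_p}.

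\begin{proof}
    We argue by induction on $|\pi(A)|$. If $|\pi(A)|\leq 1$, then $(A,+)$ is a $p$-group, hence $A$ is left nilpotent by~\cite[Proposition 4.4]{csv}; but a finite skew left brace of nilpotent type that is a $p$-brace is right nilpotent as well (for instance, $A_p=A\subseteq\Soc_n(A)$ for some $n$ follows from Theorem~\ref{thm:right_p} applied with the whole brace as its own normal abelian Sylow $p$-subgroup), so the claim holds.

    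Assume $|\pi(A)|\geq 2$ and that the result holds for all finite skew left braces of nilpotent type whose multiplicative group has the Sylow tower property with abelian Sylow subgroups and fewer prime divisors. Let $p$ be the prime such that $G_n/G_{n-1}$ is a Sylow $p$-subgroup of $(A,\circ)$ in the given normal series; then $G_{n-1}$ is a normal Hall $p'$-subgroup of $(A,\circ)$, so $(A,\circ)$ has a normal Sylow $p$-subgroup, which is abelian by hypothesis. By Theorem~\ref{thm:right_p}, $A$ is right $p$-nilpotent, so by Proposition~\ref{prop:soc_n} there exists $m\geq1$ with $A_p\subseteq\Soc_m(A)$. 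Now consider $B=A/A_{p'}$, where $A_{p'}=\sum_{q\in\pi(A)\setminus\{p\}}A_q$ is the Hall $p'$-subgroup of $(A,+)$, which is an ideal of $A$ by Lemma~\ref{lem:Hall}. Then $(B,+)$ is a $p$-group, so $B$ is right nilpotent by the base case, i.e. $B=\Soc_k(B)$ for some $k$ by Lemma~\ref{lem:soc_n_2}. Lifting via Lemma~\ref{lem:mbbr}, we get $R_k(A,A)\subseteq A_{p'}$, and hence $R_k(A_{p'},A)\subseteq R_k(A,A)\subseteq A_{p'}$. On the other hand, $A_{p'}$ is an ideal of $A$ whose multiplicative group is a Hall $p'$-subgroup of $(A,\circ)$; since the Sylow tower property and abelianness of Sylow subgroups pass to $A_{p'}$, and $|\pi(A_{p'})|<|\pi(A)|$, the inductive hypothesis gives that $A_{p'}$ is right nilpotent, so $R_\ell(A_{p'},A_{p'})=0$ for some $\ell$. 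Combining $A_p\subseteq\Soc_m(A)$ with $R_k(A,A)\subseteq A_{p'}$ and the right nilpotency of $A_{p'}$ inside $A$ (noting $A_{p'}$ is an ideal, so $R_j(A_{p'},A)$ eventually lands in $\Soc(A)$ by iterating Lemma~\ref{lem:R:inclusion} and the $A_{p'}$-computation together with Lemma~\ref{lem:Soc(A_p)}-type reasoning), we conclude that $R_N(A,A)\subseteq\Soc(A)$ for $N$ large, whence $A=\Soc_{N+1}(A)$ by Lemma~\ref{lem:mbbr} and $A$ is right nilpotent by Lemma~\ref{lem:soc_n_2}.
\end{proof}

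The main obstacle I anticipate is the last bookkeeping step: combining the fact that $A_p$ sits in some $\Soc_m(A)$ with the right nilpotency of the ideal $A_{p'}$ to conclude that the whole brace satisfies $R_N(A,A)=0$. The cleanest route is probably to show directly that $A/A_{p'}$ being right nilpotent already forces $R_k(A,A)\subseteq A_{p'}$, then use that $R_k(A,A)$ is an ideal contained in the right-nilpotent ideal $A_{p'}$, so the sequence $R_{k+j}(A,A)=R_j(R_k(A,A),A)\subseteq R_j(A_{p'},A)$ descends to $0$ because $A_{p'}$ is right nilpotent \emph{as an ideal of $A$} (which is what the inductive hypothesis plus Proposition~\ref{lem:R:ideal} actually delivers, since right nilpotency of $A_{p'}$ gives $R_j(A_{p'},A_{p'})=0$ and one checks $R_j(A_{p'},A)$ reduces to computations inside $A_{p'}$ up to the $\Soc$-correction handled by Theorem~\ref{thm:right_p}). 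Care is needed to make this reduction precise, but no genuinely new idea beyond Theorem~\ref{thm:right_p} and the ideal-theoretic lemmas of this section should be required.
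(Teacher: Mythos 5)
There are genuine gaps in your argument, concentrated in the inductive step. First, you take $p$ to be the prime at the \emph{top} of the Sylow tower, so that $G_{n-1}$ is a normal Hall $p'$-subgroup of $(A,\circ)$, and then assert that this gives a normal Sylow $p$-subgroup. That implication is false: a $p$-nilpotent group (normal Hall $p'$-subgroup) need not have a normal Sylow $p$-subgroup (e.g.\ $\Sym_3$ with $p=2$). To invoke Theorem~\ref{thm:right_p} you must take $p$ at the \emph{bottom} of the tower, where $G_1$ is a normal subgroup that is a Sylow $p$-subgroup. Second, you quotient by $A_{p'}$ and cite Lemma~\ref{lem:Hall} to say it is an ideal; that lemma only gives a left ideal that is normal in $(A,+)$, and normality in $(A,\circ)$ is exactly what is not available once $p$ sits at the correct (bottom) end of the tower. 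Third, the closing "bookkeeping" is not actually carried out, and as structured it cannot close easily: the inductive hypothesis gives $R_\ell(A_{p'},A_{p'})=0$, but what you need is control of $R_j(A_{p'},A)$, with second argument all of $A$; right nilpotency of $A_{p'}$ as a brace in its own right says nothing about that. You acknowledge this yourself, but it is the crux, not a detail.

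The irony is that your own strategy works if you peel off the other factor. With $p$ at the bottom of the tower, $A_p$ is an ideal by Lemma~\ref{lem:A_p:ideal}, Theorem~\ref{thm:right_p} gives $R_n(A_p,A)=0$, and the inductive hypothesis applied to $A/A_p$ gives $R_k(A,A)\subseteq A_p$ (since $R_k$ commutes with the quotient map); then $R_{k+n}(A,A)\subseteq R_n(A_p,A)=0$ and $A$ is right nilpotent, with no need to discuss $A_{p'}$ at all. For comparison, the paper argues by minimal counterexample instead of inducting on $|\pi(A)|$: it shows $0\ne\Soc(A_p)=\Soc(A)\cap A_p$ (reusing the computation from the proof of Theorem~\ref{thm:right_p}, which needs the abelianness of the Sylow subgroup), concludes $A/\Soc(A)$ is right nilpotent by minimality, and lifts back via \cite[Proposition 2.17]{csv}. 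Also, as a side remark, your blanket claim that a $p$-brace of nilpotent type is automatically right nilpotent is false; only the parenthetical appeal to Theorem~\ref{thm:right_p} (which uses that $(A,\circ)$ is abelian in the base case) saves that step.
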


\begin{proof}
    Assume that the result is not true and let $A$ be a counterexample of minimal size. 
    Since $(A,\circ)$ has the Sylow tower property, there exists a normal Sylow $p$-subgroup $A_p$ of $(A,\circ)$. Then $A_p$ is a non-zero ideal of $A$
    and one proves that 
    \[
    0\ne\Soc(A_p)=\Soc(A)\cap A_p\subseteq\Soc(A).
    \]
    The 
    group $(A/\Soc(A),\circ)$ has abelian Sylow subgroups 
    and has the Sylow tower property. Since $A$ is a non-trivial
    skew left brace, $0<|A/\Soc(A)|<|A|$, and therefore $A/\Soc(A)$ 
    is right nilpotent by the minimality of $|A|$. 
    By~\cite[Proposition 2.17]{csv}, $A$ is right nilpotent, a contradiction.
\end{proof}

There are examples of right nilpotent left braces where the multiplicative 
group contains a non-abelian Sylow subgroup or does not have the Sylow tower property:

\begin{exa}
    The operation $a\circ b=a+3^ab$ turns $\Z/8$ into a right nilpotent left brace with multiplicative group isomorphic to the quaternion group. This example appears in~\cite{MR3320237}.
\end{exa}

\begin{exa}
    Let $G=\Alt_4\times\Sym_3$. Each Sylow subgroups of $G$ is abelian, so it follows from~\cite[Theorem 2.1]{cjo} that there exists a left brace with multiplicative group isomorphic to $G$. The group $G$ does not have the Sylow tower property. The database of left braces of~\cite{MR3647970} shows that there are only four left braces with multiplicative group isomorphic to $G$, all with additive group isomorphic to $C_6\times C_6\times C_2$. However, only one of these four braces is not right nilpotent. 
\end{exa}

As a corollary, we obtain a generalization of Theorem~\ref{thm:cyclic_sylows:brace}:

\begin{cor}
    Let $A$ be a finite skew left brace of nilpotent type. If all Sylow subgroups of the multiplicative
    group of $A$ are cyclic, then $A$ is right nilpotent.
\end{cor}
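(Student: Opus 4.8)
The plan is to reduce to the two results already available: Corollary~\ref{cor:STP+abelian} and the theory of right $p$-nilpotent skew left braces developed in this section. The key observation is that a finite group all of whose Sylow subgroups are cyclic automatically has the Sylow tower property, so the hypotheses of Corollary~\ref{cor:STP+abelian} are met.

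\begin{proof}
    If every Sylow subgroup of the multiplicative group $(A,\circ)$ is cyclic, then in particular every Sylow subgroup of $(A,\circ)$ is abelian. Moreover, a finite group whose Sylow subgroups are all cyclic is metacyclic and supersolvable (this is a classical theorem; see~\cite[Theorem 10.1.10]{MR798076}), and in particular it has the Sylow tower property. Thus $(A,\circ)$ has the Sylow tower property and all of its Sylow subgroups are abelian, so $A$ is right nilpotent by Corollary~\ref{cor:STP+abelian}.
\end{proof}

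The main obstacle, such as it is, is purely a matter of citation: one must invoke the correct classical group-theoretic fact that cyclic Sylow subgroups force the Sylow tower property (equivalently, that a $Z$-group is supersolvable). Once that is in hand the corollary is immediate from Corollary~\ref{cor:STP+abelian}. An alternative route, avoiding the Sylow tower input, is to argue directly via Theorem~\ref{thm:right_p}: a minimal counterexample $A$ is non-trivial, and if $p$ is the largest prime in $\pi(A)$ then the Sylow $p$-subgroup of $(A,\circ)$ is normal (being cyclic, hence its own normalizer modulo the Hall $p'$-subgroup by Burnside's normal $p$-complement theorem applied to the largest prime) and abelian, so $A$ is right $p$-nilpotent by Theorem~\ref{thm:right_p}; then one passes to $A/\Soc_k(A_p)$ and uses minimality together with Proposition~\ref{prop:soc_n} and the fact that being right nilpotent is equivalent to being right $p$-nilpotent for all $p$. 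I would present the first, shorter argument, since Corollary~\ref{cor:STP+abelian} has already packaged exactly the needed implication.
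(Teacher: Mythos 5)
Your proof is correct and follows essentially the same route as the paper: the authors likewise note that cyclic Sylow subgroups force $(A,\circ)$ to be supersolvable, hence to have the Sylow tower property, and then invoke Corollary~\ref{cor:STP+abelian}. The alternative argument you sketch is not needed; the short version is exactly what the paper does.
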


\begin{proof}
    Since all Sylow subgroups of $(A,\circ)$ are cyclic, the group $(A,\circ)$ is supersolvable and hence
    it has the Sylow tower property. Then the claim follows from Corollary~\ref{cor:STP+abelian}.
\end{proof}


\section{Left $p$-nilpotent skew left braces}
\label{left}

Let $A$ be a skew left brace. For subsets $X$ and $Y$ of $A$ 
we define inductively $L_0(X,Y)=Y$ and $L_{n+1}(X,Y)=X*L_n(X,Y)$ for $n\geq0$.

\begin{defn}
    Let $p$ be a prime number. A finite skew left brace $A$ of nilpotent type is said to be \emph{left $p$-nilpotent} if there exists $n\geq1$ such that $L_n(A,A_p)=0$, where $A_p$ is the Sylow $p$-subgroup of $(A,+)$.
\end{defn}

\begin{lem}
\label{lem:factorization}
    Let $A$ be a skew left brace such that its additive group is the direct product of the left ideals $B$ and $C$. Then $A*(B+C)=A*B+A*C$. Moreover, if $A=\oplus_{i=1}^{n} B_i$ where the $B_i$ are left ideals, then 
    \[
    A*\sum_{i=1}^{n} B_i=\sum_{i=1}^{n} A*B_i.
    \]
\end{lem}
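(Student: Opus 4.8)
The plan is to prove the identity $A*(B+C)=A*B+A*C$ first and then bootstrap to the general case by induction, exactly along the lines of the standard manipulations with the $*$-operation recalled in Section~\ref{skew}.

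\textbf{Step 1: the binary case.} One inclusion is immediate: since $B$ and $C$ are subgroups of $(A,+)$, we have $B\subseteq B+C$ and $C\subseteq B+C$, so $A*B$ and $A*C$ are both contained in the additive subgroup $A*(B+C)$, whence $A*B+A*C\subseteq A*(B+C)$. For the reverse inclusion, it suffices to show that every generator $a*(b+c)$ with $a\in A$, $b\in B$, $c\in C$ lies in $A*B+A*C$. Here I would invoke the identity $a*(b+c)=a*b+b+a*c-b$ from the preliminaries. The term $a*b$ lies in $A*B$ and $a*c$ lies in $A*C$; the issue is the conjugation $b+a*c-b$. Because $C$ is a left ideal and $(A,+)$ is the \emph{direct} product $B\times C$, the subgroup $C$ is normal in $(A,+)$, so $b+a*c-b\in C$; in fact I claim $b+(a*c)-b\in A*C$. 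To see this, note $a*c=\lambda_a(c)-c$, so $b+a*c-b = \bigl(b+\lambda_a(c)-b\bigr)-\bigl(b+c-b\bigr)$; using that $B$ and $C$ commute elementwise in a direct product, $b+c-b=c$ and $b+\lambda_a(c)-b=\lambda_a(c)$ (since $\lambda_a(c)\in C$ as $C$ is a left ideal), so the conjugate equals $a*c$ itself. Thus $a*(b+c)=a*b+a*c\in A*B+A*C$, and the binary case follows. (The same computation shows the two summands $A*B$ and $A*C$ in fact commute, which is convenient for the next step.)

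\textbf{Step 2: induction.} For the general statement, write $A=\oplus_{i=1}^n B_i$ and argue by induction on $n$. The case $n=1$ is trivial. For the inductive step, set $C=\oplus_{i=1}^{n-1}B_i$, which is a left ideal of $A$ (a sum of left ideals is a left ideal, as used already in Lemma~\ref{lem:Hall}), and $(A,+)=C\times B_n$ is a direct product of left ideals. Applying Step~1 gives $A*\sum_{i=1}^n B_i = A*C+A*B_n$, and the inductive hypothesis applied to $C=\oplus_{i=1}^{n-1}B_i$ gives $A*C=\sum_{i=1}^{n-1}A*B_i$; combining, $A*\sum_{i=1}^n B_i=\sum_{i=1}^n A*B_i$.

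\textbf{Main obstacle.} The only delicate point is the conjugation term in Step~1: one must genuinely use that the decomposition is a \emph{direct} product (so the two factors centralize each other additively) together with the fact that both $B$ and $C$ are left ideals (so each is $\lambda$-invariant and, in particular, $\lambda_a(c)$ stays in $C$). If one only knew $(A,+)=B+C$ without directness, or if the factors were not left ideals, the conjugate $b+(a*c)-b$ need not simplify and the argument would break. Everything else is a routine expansion using the brace axioms recalled before Lemma~\ref{lem:Sylow:left_ideal}.
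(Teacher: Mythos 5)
Your proof is correct and takes essentially the same approach as the paper: the whole content is the identity $a*(b+c)=a*b+b+a*c-b$ together with the observation that $a*c$ lies in $C$ (as $C$ is a $\lambda$-invariant left ideal) and therefore commutes additively with $b\in B$ in the direct product, so the conjugate collapses and $a*(b+c)=a*b+a*c$; the general case is the same induction. You merely spell out the details (both inclusions and the collapse of the conjugation term) that the paper's one-line computation leaves implicit.
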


\begin{proof}
    Let $a\in A$, $b\in B$ and $c\in C$. 
    Then 
    \[
    a*(b+c)=a*b+b+a*c-b=a*b+a*c
    \]
    holds for all $a\in A$, $b\in B$ and $c\in C$.
    The second part follows by induction.
\end{proof}

\begin{pro}
    \label{pro:left_p}
    Let $A$ be a finite skew left brace of nilpotent type. Then $A$ 
    is left nilpotent if and only if $A$ is left $p$-nilpotent for all $p\in\pi(A)$.
\end{pro}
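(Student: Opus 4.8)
The plan is to reduce left nilpotency to the equivalent statement ``$L_n(A,A)=0$ for some $n$'' (note that $L_n(A,A)=A^{n+1}$, since $L_0(A,A)=A$ and $L_{n+1}(A,A)=A*L_n(A,A)$), and then to split $L_n(A,A)$ along the primary decomposition of the additive group.

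For the easy implication, I would first observe that $L_n(A,-)$ is monotone in its second argument: $X\subseteq Y$ implies $L_n(A,X)\subseteq L_n(A,Y)$, by a one-line induction on $n$. Hence if $A$ is left nilpotent, say $L_N(A,A)=0$, then $L_N(A,A_p)\subseteq L_N(A,A)=0$ for every $p\in\pi(A)$, so $A$ is left $p$-nilpotent for all $p$.

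For the converse, assume $A$ is left $p$-nilpotent for all $p\in\pi(A)$. Since $A$ is of nilpotent type, $(A,+)=\bigoplus_{p\in\pi(A)}A_p$ and each $A_p$ is a left ideal by Lemma~\ref{lem:Sylow:left_ideal}. The heart of the argument is the identity
\[
L_n(A,A)=\bigoplus_{p\in\pi(A)}L_n(A,A_p)\qquad(n\geq0),
\]
together with the fact that each $L_n(A,A_p)$ is a left ideal of $A$ contained in $A_p$. I would prove the latter by first noting that if $B$ is any left ideal of $A$, then $A*B$ is again a left ideal of $A$ contained in $B$: the inclusion $A*B\subseteq B$ is immediate from $a*b=\lambda_a(b)-b\in B$, and $\lambda$-invariance follows from the identity $\lambda_c(a*b)=(c\circ a\circ c')*\lambda_c(b)$ already used in the proof of Proposition~\ref{lem:R:ideal}; applying this with $B=A_p$ and iterating gives $L_{n+1}(A,A_p)=A*L_n(A,A_p)\subseteq L_n(A,A_p)\subseteq A_p$. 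The displayed decomposition is then proved by induction on $n$, the inductive step being a distributivity computation as in the proof of Lemma~\ref{lem:factorization}: expanding $a*\bigl(\sum_p b_p\bigr)$ with $b_p\in L_n(A,A_p)$ via repeated use of $a*(x+y)=a*x+x+a*y-x$, the terms $x+a*y-x$ collapse to $a*y$ because distinct Sylow subgroups of the nilpotent group $(A,+)$ commute elementwise, whence $A*\bigoplus_p L_n(A,A_p)=\bigoplus_p A*L_n(A,A_p)=\bigoplus_p L_{n+1}(A,A_p)$.

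Finally, for each $p\in\pi(A)$ choose $n_p$ with $L_{n_p}(A,A_p)=0$; since $L_{m+1}(A,A_p)=A*L_m(A,A_p)$ and $A*0=0$, this gives $L_m(A,A_p)=0$ for all $m\geq n_p$. Setting $N=\max_{p\in\pi(A)}n_p$, we obtain $L_N(A,A_p)=0$ for every $p$, hence $L_N(A,A)=\bigoplus_p L_N(A,A_p)=0$, i.e. $A^{N+1}=0$ and $A$ is left nilpotent. The main obstacle I anticipate is purely bookkeeping: verifying that the argument of Lemma~\ref{lem:factorization} still applies when the summands $L_n(A,A_p)$ span a proper left ideal of $(A,+)$ rather than the whole group — which, as indicated, is taken care of by the elementwise commutativity of the Sylow subgroups $A_p$.
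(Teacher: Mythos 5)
Your proposal is correct and follows essentially the same route as the paper: reduce to $L_n(A,A)=\sum_{p\in\pi(A)}L_n(A,A_p)$ via the primary decomposition of $(A,+)$ and the distributivity of $*$ over sums of commuting left ideals (Lemma~\ref{lem:factorization}), taking $N$ to be the maximum of the individual nilpotency indices. In fact you supply a detail the paper leaves implicit, namely that each $L_n(A,A_p)$ is a left ideal contained in $A_p$, which is what justifies iterating the factorization argument beyond the first step.
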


\begin{proof}
    For each $p\in\pi(A)$ 
    there exists $n(p)\in\N$ such that $L_{n(p)}(A,A_p)=0$. Let
    $n=\max\{n(p):p\in\pi(A)\}$. Then $L_n(A,A_p)=0$ for all $p\in\pi(A)$. Since
    $A$ is of nilpotent type, the group $(A,+)$ is isomorphic to the direct sum 
    of the $A_p$ for $p\in\pi(A)$. 
    Then Lemma~\ref{lem:factorization} implies that 
    \[
    L_n(A,A)=\sum_{p\in\pi(A)}L_n(A,A_p)=0.
    \]
    The other implication is trivial. 
\end{proof}

We now recall some notation about commutators. Given a skew left brace $A$, the group $(A,\circ)$ acts on $(A,+)$ by automorphisms. If in the semidirect product $(A,+)\rtimes(A,\circ)$ we identify $a$ with $(0,a)$ and $b$ with $(b,0)$, then 
\begin{align*}
 [a,b] &= (0,a)(b,1)(0,a)^{-1}(b,1)^{-1} = (0,a)(b,1)(0,a')(-b,1) \\
 &=(\lambda_a(b),a)(-\lambda_{a'}(b),a') = (\lambda_a(b)-b,1) \\
 &= (a*b,1)
\end{align*}
Under this identification, we write $[X,Y]=X*Y$ for any pair of subsets $X,Y\subseteq A$. Then the iterated commutator  
satisfies 
\[
[X,\dots, X,Y]=[X,[X,\dots,[X,Y]\dots]]=L_n(X,Y),
\]
where the subset $X$ appears $n$ times.

The following theorem was proved in~\cite{mbbr} by Meng, Ballester--Bolinches and Romero
for left braces:

\begin{thm}
\label{thm:left_p}
    Let $A$ be a finite skew left brace of nilpotent type. The following statements
    are equivalent:
    \begin{enumerate}
        \item $A$ is left $p$-nilpotent.
        \item $A_{p'}*A_p=0$.
        \item The group $(A,\circ)$ is $p$-nilpotent.
    \end{enumerate}
\end{thm}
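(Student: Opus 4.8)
The plan is to prove the cycle of implications $(1)\Rightarrow(2)\Rightarrow(3)\Rightarrow(1)$, following the strategy of Meng, Ballester--Bolinches and Romero but working with skew left braces of nilpotent type. Throughout, write $A_p$ for the Sylow $p$-subgroup of $(A,+)$ and $A_{p'}=\sum_{q\neq p}A_q$ for the Hall $p'$-complement; by Lemmas~\ref{lem:Sylow:left_ideal} and~\ref{lem:Hall} both are left ideals of $A$, so the iterated products $L_n(A,A_p)$ and the product $A_{p'}*A_p$ all live inside the additive group and behave well. The commutator reformulation recorded just before the statement, $[X,\dots,X,Y]=L_n(X,Y)$ in the semidirect product $(A,+)\rtimes(A,\circ)$, is the bridge between the brace-theoretic condition and the group-theoretic condition $(3)$.

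\textbf{Key steps.} First, $(1)\Rightarrow(2)$: if $L_n(A,A_p)=0$ for some $n$, I would show $A_{p'}*A_p=0$ directly. Since $A_{p'}*A_p\subseteq A_p$ (as $A_p$ is a left ideal), but also $A_{p'}*A_p$ should be an $A_{p'}$-stable piece sitting inside $A_p\cap(\text{something of }p'\text{-power order})$; more cleanly, one iterates: $L_1(A,A_p)=A*A_p\supseteq A_{p'}*A_p$, and using left nilpotency of $A_p$ itself (it is a $p$-brace, hence left nilpotent by~\cite[Proposition 4.4]{csv}) together with $L_n(A,A_p)=0$ one squeezes $A_{p'}*A_p$ into $0$. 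Second, $(2)\Rightarrow(3)$: assuming $A_{p'}*A_p=0$, I want the Sylow $p$-subgroup of $(A,\circ)$ to have a normal $p'$-complement. The condition $A_{p'}*A_p=0$ says $\lambda_a(b)=b$ for all $a\in A_{p'}$, $b\in A_p$; combined with $A_{p'}$ being a left ideal (hence a subgroup of $(A,\circ)$) and $A_p$ being left-nilpotent-hence having $A_p\subseteq$ a subgroup closed under $\circ$, one checks $A_{p'}$ is in fact an ideal, that $A/A_{p'}$ is a $p$-brace, and that $(A,\circ)=(A_p,\circ)(A_{p'},\circ)$ with $(A_{p'},\circ)$ normal — giving $p$-nilpotency of $(A,\circ)$. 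Here I would use $|A_{p'}|$ coprime to $p$ and $|A/A_{p'}|$ a $p$-power. Third, $(3)\Rightarrow(1)$: if $(A,\circ)$ is $p$-nilpotent with normal Hall $p'$-subgroup $N$, then $N$ is characteristic in $(A,\circ)$; I would argue $N=A_{p'}$ as sets (both are the unique Hall $p'$-subgroup of the respective group, and nilpotency of $(A,+)$ forces them to coincide — this needs a small argument, perhaps via Lemma~\ref{lem:Hall} and counting), deduce $A_{p'}$ is an ideal, pass to $A/A_{p'}$ which is a $p$-brace and hence left nilpotent, lift to get $L_n(A,A_p)\subseteq A_{p'}$, and then finish inside $A_{p'}$: since $A_p\subseteq\ker\lambda$ restricted suitably... more precisely $L_{n+k}(A,A_p)\subseteq L_k(A,A_{p'}\cap A_p)=L_k(A,0)=0$ once one knows $A_p$ acts trivially enough, which circles back to $(2)$.

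\textbf{Main obstacle.} The delicate point is the interface between the additive Sylow/Hall decomposition and the multiplicative group structure: establishing that $A_{p'}$ (defined additively) is simultaneously a \emph{normal} subgroup of $(A,\circ)$ and hence an \emph{ideal} of $A$, under hypothesis $(2)$ or $(3)$, is where the real work lies — Lemma~\ref{lem:A_p:ideal} handles the analogous statement for $A_p$ under a normality hypothesis on the multiplicative Sylow subgroup, but here I must instead extract multiplicative normality of the $p'$-part from the vanishing of a $*$-product, and keep track of the fact that $(A,+)$ being nilpotent does \emph{not} make $(A,\circ)$ nilpotent. I expect that the cleanest route is to set up a minimal-counterexample argument for $(3)\Rightarrow(1)$ (as in the proof of Theorem~\ref{thm:right_p}), reducing modulo a well-chosen nonzero ideal such as $\Soc(A_p)$ or $A_{p'}$, and to prove $(1)\Leftrightarrow(2)$ by a short direct computation using Lemma~\ref{lem:factorization} and left nilpotency of the $p$-brace $A_p$.
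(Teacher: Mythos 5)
Your cycle $(1)\Rightarrow(2)\Rightarrow(3)\Rightarrow(1)$ is the same skeleton as the paper's proof, and your treatments of $(2)\Rightarrow(3)$ (show $A_{p'}*A\subseteq A_{p'}$ via Lemma~\ref{lem:factorization}, conclude $A_{p'}$ is an ideal, hence $(A_{p'},\circ)$ is a normal $p$-complement) and of $(3)\Rightarrow(1)$ (identify the normal $p$-complement with $A_{p'}$, make it an ideal, use left nilpotency of the $p$-brace) contain the right ideas. In fact your route for $(3)\Rightarrow(1)$ via the quotient $A/A_{p'}$ is a legitimate variant: since $A/A_{p'}$ has additive group of order a power of $p$ it is left nilpotent, so $L_n(A,A)\subseteq A_{p'}$ for some $n$, and by monotonicity $L_n(A,A_p)\subseteq L_n(A,A)\cap A_p\subseteq A_{p'}\cap A_p=0$ — you do not need the final ``circles back to (2)'' step, which suggests you did not notice the argument was already complete. (The paper instead proves $L_n(A,A_p)=L_n(A_p,A_p)$ by induction, writing $a=x\circ y$ with $x\in A_p$, $y\in A_{p'}$ and using $A_{p'}*A_p=0$; both work.)

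The genuine gap is in $(1)\Rightarrow(2)$. Knowing $L_n(A,A_p)=0$, hence $L_n(A_{p'},A_p)=0$, you must descend from ``the $n$-fold iterated product vanishes'' to ``the single product $A_{p'}*A_p$ vanishes.'' Your proposed tool — left nilpotency of the $p$-brace $A_p$ — is about the action of $A_p$ on itself and says nothing about how $A_{p'}$ acts on $A_p$; it cannot produce this descent, and no amount of ``squeezing'' will, because the descent is false for non-coprime actions (a nontrivial nilpotent action has all iterated commutators eventually zero but the first one nonzero). The missing ingredient is the coprime action lemma \cite[Lemma 4.29]{MR2426855}: since $(A_{p'},\circ)$ acts on $(A_p,+)$ by the automorphisms $\lambda_a$ and $\gcd(|A_{p'}|,|A_p|)=1$, one has $[A_p,A_{p'},A_{p'}]=[A_p,A_{p'}]$, i.e. $L_2(A_{p'},A_p)=L_1(A_{p'},A_p)$ under the commutator identification set up before the theorem; iterating gives $A_{p'}*A_p=L_n(A_{p'},A_p)\subseteq L_n(A,A_p)=0$. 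This coprimality argument is the one place where the theorem really uses that $A_{p'}$ and $A_p$ have coprime orders, and it is absent from your sketch.
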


\begin{proof}
    We first prove that (1) implies (2). Since $A$ is left $p$-nilpotent,
    there exists $n\in\N$ such that 
    $L_n(A_{p'},A_p)\subseteq L_n(A,A_p)=0$.
    Since $(A_{p'},\circ)$ acts by automorphisms on $(A_{p},+)$ and this is a coprime action, it follows from~\cite[Lemma 4.29]{MR2426855} that 
    \[
    L_1(A_{p'},A_p)=A_{p'}*A_p=A_{p'}*(A_{p'}*A_p)=L_2(A_{p'},A_p).
    \]
    By induction one then proves that $A_{p'}*A_p=L_n(A_{p'},A_p)=0$.
    
    We now prove that (2) implies (3). It is enough to prove that $(A_{p'},\circ)$ 
    is a normal subgroup of $(A,\circ)$. By using Lemma~\ref{lem:factorization},  
    \[
    A_{p'}*A=A_{p'}*(A_p+A_{p'})=(A_{p'}*A_p)+(A_{p'}*A_{p'})\subseteq A_{p'}.
    \]
    since $A_p'$ is a left ideal of $A$ and $A_{p'}*A_p=0$. Then 
    $A_{p'}$ is an ideal of $A$ by Lemma~\ref{lem:Hall} and~\cite[Lemma 1.9]{csv}. In particular, $(A_{p'},\circ)$ is a normal subgroup of $(A,\circ)$. 
    
    Finally we prove that (3) implies (1). We need to prove that $L_n(A_p,A_p)=0$ for some $n$. Since $(A,\circ)$ is $p$-nilpotent, 
    there exists a normal $p$-complement that is a characteristic subgroup of $(A,\circ)$. This group is $A_{p'}$ and hence $A_{p'}$ is an ideal of $A$. Then
    $A_{p'}*A_p\subseteq A_{p'}\cap A_p=0$. We now prove that
    $L_n(A,A_p)=L_n(A_p,A_p)$ for all $n\geq0$. The case where $n=0$ is trivial, so assume that the result holds for some $n\geq0$. By the inductive hypothesis,
    \[
    L_{n+1}(A,A_p)=A*L_n(A,A_p)=A*L_n(A_p,A_p).
    \]
    Thus it is enough to prove that $A*L_n(A_p,A_p)\subseteq A_p*L_n(A_p,A_p)$. Let $a\in A$ and $b\in L_n(A_p,A_p)$. Write $a=x\circ y$ for $x\in A_p$ and $y\in A_{p'}$. Then
    \[
    a*b=(x\circ y)*b=x*(y*b)+y*b+x*b=x*b\in A_p*L_n(A_p,A_p)
    \]
    since $A_{p'}*A_p=0$. The skew left brace $A_p$ is left nilpotent
    by~\cite[Proposition 4.4]{csv}, so there exists $n\in\N$ such that $L_n(A_p,A_p)=0$.
\end{proof}

The following theorem was proved by Smoktunowicz for left braces, see~\cite[Theorem 1.1]{MR3814340}. 
For skew left braces a proof appears in~\cite[Theorem 4.8]{csv}.

\begin{thm}
    Let $A$ be a finite skew left brace of nilpotent type. Then $A$ is
    left nilpotent if and only if the multiplicative group of $A$ is nilpotent.
\end{thm}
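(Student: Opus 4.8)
The plan is to derive this from Proposition~\ref{pro:left_p} together with Theorem~\ref{thm:left_p}, reducing everything to the corresponding statement for a single prime and then invoking the standard group-theoretic fact that a finite nilpotent group is the direct product of its Sylow subgroups (equivalently, that a finite group is nilpotent if and only if it is $p$-nilpotent for every prime $p$ dividing its order). So the argument is essentially a bookkeeping exercise: combine the ``prime-by-prime'' characterizations already in hand.

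First I would recall that by Proposition~\ref{pro:left_p}, the skew left brace $A$ is left nilpotent if and only if $A$ is left $p$-nilpotent for all $p\in\pi(A)$. Then by the equivalence (1)$\Leftrightarrow$(3) of Theorem~\ref{thm:left_p}, $A$ is left $p$-nilpotent if and only if the group $(A,\circ)$ is $p$-nilpotent. Putting these together, $A$ is left nilpotent if and only if $(A,\circ)$ is $p$-nilpotent for every $p\in\pi(A)=\pi((A,\circ))$ (the two sets of primes coincide since $|(A,+)|=|(A,\circ)|$). Finally, a finite group $G$ is nilpotent if and only if it is $p$-nilpotent for every prime $p$ dividing $|G|$: indeed, if $G$ is nilpotent then each Sylow subgroup is normal, so the product of the Sylow $q$-subgroups for $q\ne p$ is a normal Hall $p'$-subgroup; conversely, if $G$ is $p$-nilpotent for all $p$, then for each $p$ the normal Hall $p'$-subgroup forces the Sylow $p$-subgroup to be normal, so all Sylow subgroups are normal and $G$ is nilpotent. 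Applying this with $G=(A,\circ)$ finishes the proof: $A$ is left nilpotent $\Leftrightarrow$ $(A,\circ)$ is $p$-nilpotent for all $p\mid|A|$ $\Leftrightarrow$ $(A,\circ)$ is nilpotent.

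There is essentially no obstacle here; the whole point of developing the notion of left $p$-nilpotency and proving Theorem~\ref{thm:left_p} is precisely to make this theorem a one-line corollary, in the spirit of Meng, Ballester--Bolinches and Romero. If anything needs care, it is only the harmless remark that $\pi(A)$ computed from the additive group equals the set of prime divisors of $|(A,\circ)|$, and the invocation of the elementary characterization of finite nilpotent groups via $p$-nilpotency; both are entirely standard and can be cited or dispatched in a sentence. The only expository choice is whether to route the argument through condition (2) of Theorem~\ref{thm:left_p} ($A_{p'}*A_p=0$) instead of condition (3), but going through (3) is cleaner since nilpotency of $(A,\circ)$ is exactly what we want to characterize.
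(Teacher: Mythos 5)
Your argument is correct and coincides with the paper's own proof, which simply observes that Proposition~\ref{pro:left_p} together with Theorem~\ref{thm:left_p} (via the equivalence of (1) and (3)) gives the statement once one knows that a finite group is nilpotent if and only if it is $p$-nilpotent for every prime $p$ dividing its order. The only slip is in your justification of that last standard fact: for a single prime $p$, the existence of a normal Hall $p'$-subgroup does not by itself force the Sylow $p$-subgroup to be normal (e.g.\ $\Sym_3$ with $p=2$); one must use $p$-nilpotency for all primes simultaneously, the Sylow $p$-subgroup then being recovered as the intersection of the normal $q$-complements over $q\neq p$ --- but the fact itself is true and standard, so this does not affect the proof.
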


\begin{proof}
    As it was observed in~\cite{mbbr}, Proposition~\ref{pro:left_p} and Theorem~\ref{thm:left_p} prove the theorem.
\end{proof}

\subsection*{Acknowledgments}

This work was partially supported by PICT 2016-2481 and UBACyT 20020171000256BA.
Vendramin acknowledges the support of NYU-ECNU Institute of Mathematical Sciences at NYU Shanghai. The authors thank Ferran Ced\'o and Wolfgang Rump for comments and corrections. 

\bibliographystyle{abbrv}
\bibliography{refs}

\end{document}